\tikzset{
    >=stealth,
    every picture/.style={thick},
    graphs/every graph/.style={empty nodes},
}
\tikzstyle{vertex}=[
\tikzstyle{printersafe}=[decoration={snake,amplitude=0pt}]
\newcommand{\pp}{\mathbb{P}}
\newcommand{\qq}{\mathbb{Q}}
\newcommand{\cc}{\mathbb{C}}
\newcommand{\oo}{\mathcal{O}}
\def\O#1.{\mathcal {O}_{#1}}			
\def\pr #1.{\mathbb P^{#1}}				
\def\af #1.{\mathbb A^{#1}}			
\def\ses#1.#2.#3.{0\to #1\to #2\to #3 \to 0}	
\def\xrar#1.{\xrightarrow{#1}}			
\def\K#1.{K_{#1}}						
\def\bA#1.{\mathbf{A}_{#1}}			
\def\bM#1.{\mathbf{M}_{#1}}				
\def\bL#1.{\mathbf{L}_{#1}}				
\def\bB#1.{\mathbf{B}_{#1}}				
\def\bK#1.{\mathbf{K}_{#1}}			
\def\subs#1.{_{#1}}					
\def\sups#1.{^{#1}}
\newtheorem{introthm}{Theorem}
  \newtheorem{theorem}{Theorem}[section]
  \newtheorem{lemma}[theorem]{Lemma}
  \newtheorem{proposition}[theorem]{Proposition}
  \newtheorem{definition}[theorem]{Definition}
  \newtheorem{example}[theorem]{Example}
  \newtheorem{problem}[theorem]{Problem}
  \newtheorem{question}[theorem]{Question}
\theoremstyle{remark}
\numberwithin{equation}{section}
\begin{document}

\title[Toric models of smooth Fano threefolds]{Toric models of smooth Fano threefolds}

\author[K.~Loginov]{Konstantin Loginov}
\address{Steklov Mathematical Institute of Russian Academy of Sciences, Moscow, Russia.
}
\email{loginov@mi-ras.ru}

\author[J.~Moraga]{Joaqu\'in Moraga}
\address{UCLA Mathematics Department, Box 951555, Los Angeles, CA 90095-1555, USA
}
\email{jmoraga@math.ucla.edu}

\author[A.~Vasilkov]{Artem Vasilkov}
\address{
National Research University Higher School of Economics, Moscow, Russian Federation.}
\email{aavasilkov@edu.hse.ru}

\subjclass[2020]{Primary 14B05, 14E30, 14L24, 14M25;
Secondary  14A20, 53D20.}
\keywords{Birational complexity, toric geometry, toric models, Fano $3$-folds}

\begin{abstract}
We prove that a general rational smooth Fano threefold admits a toric model.
More precisely, for a general rational smooth Fano threefold $X$, we show the existence of a boundary divisor $D$ for which $(X,D)\simeq_{\rm cbir} (\pp^3,H_0+H_1+H_2+H_3)$, where the $H_i$'s are
the coordinate hyperplanes.
In particular, a general rational smooth Fano threefold has birational complexity zero.
We argue that the three conditions:
rationality, generality, and smoothness
are indeed necessary for the theorem.
\end{abstract} 

\maketitle

\setcounter{tocdepth}{1} 
\tableofcontents

\section{Introduction}
Fano varieties are one of the three building blocks of smooth projective varieties.
In each dimension, there are only finitely many families of smooth Fano varieties~\cite{KMM87}.
In dimension $2$, these are known as del Pezzo surfaces and were classified in $10$ families by the Italian school of algebraic geometers.
The smooth Fano $3$-folds of Picard rank one were classified by the work of Iskovskikh~\cite{Isk77,Isk78}.
Mori and Mukai finished the classification of smooth Fano $3$-folds in $105$ families~\cite{MM81}.
The problem of rationality of smooth Fano $3$-folds
was tackled by Conte in the case of Picard rank one~\cite{Con82}.
Alzati and Bertolini settled the rationality problem for smooth Fano $3$-folds of larger Picard rank~\cite{AB92}.
To achieve this, they used criteria for rationality due to Shokurov~\cite{Sho83}.
The main theorem of this article states that a general rational smooth Fano $3$-fold satisfies a strong form of rationality, namely,
they admit a toric model (see Definition~\ref{def:toric-model}).

\begin{introthm}\label{introthm:3-fold}
Let $X$ be a rational smooth Fano threefold
that is general in its deformation family.
Then, the variety $X$ admits a toric model.
Equivalently, there exists a boundary divisor $D$ on $X$ for which
\[
(X,D) \simeq_{\rm cbir} (\pp^3,H_0+H_1+H_2+H_3),
\]
where the $H_i$'s are the coordiante hyperplanes of $\pp^3$.
\end{introthm}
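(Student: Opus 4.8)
The plan is to reduce the statement to a finite, family-by-family verification and to exploit the fact that admitting a toric model is a crepant birational invariant. Recall (Definition~\ref{def:toric-model}) that $(X,D)$ admits a toric model precisely when it is crepant birationally equivalent to a toric log Calabi--Yau pair, equivalently when its birational complexity vanishes. Since $(\pp^3, H_0+H_1+H_2+H_3)$ is the standard toric log Calabi--Yau pair of complexity zero, and since $\simeq_{\rm cbir}$ is an equivalence relation, it suffices to produce, for $X$, a boundary $D$ with $K_X+D\sim_\qq 0$ together with a single crepant birational model $(X',D')$ that is toric of complexity zero; by the characterization of complexity-zero pairs, any such $(X',D')$ is then crepant birational to $(\pp^3,\sum_i H_i)$. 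I would first invoke the Mori--Mukai classification to reduce to the $105$ deformation families, and then restrict attention to the rational ones, which are identified by the work of Alzati--Bertolini building on Shokurov's rationality criteria.

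For each rational family I would use the explicit birational geometry of its general member. Many families are presented as blow-ups of $\pp^3$, of smooth quadrics, or of other toric Fanos along smooth centers, as products, or as projective bundles over toric surfaces; in each such presentation there is a natural candidate boundary obtained by transporting the toric boundary of the base through the given morphisms and adjusting by the appropriate exceptional divisors. Concretely, starting from the toric pair $(\pp^3,\sum_i H_i)$ I would build a chain of crepant birational modifications---blow-ups centered at strata of the boundary together with their inverses, divisorial contractions, and flops---whose other end is a log Calabi--Yau structure $(X,D)$ supported on the given Fano $X$. Because admitting a toric model is invariant under crepant birational equivalence, it is enough that one end of the chain be the toric pair; the intermediate steps and the endpoint $X$ need not themselves be toric. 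The key bookkeeping is that a crepant blow-up of a stratum of a toric boundary is again toric, so the toric-model property survives the chain, while verifying $K_X+D\sim_\qq 0$ together with the condition that each center lies in the log canonical locus guarantees crepancy of the connecting map.

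The main obstacle is twofold. First, for families whose general member carries no evident toric, product, or bundle structure, one must instead start from a known rationality construction---typically a Sarkisov-type link, or a projection from a curve or a surface---and promote the resulting birational map $X\dashrightarrow \pp^3$ to a \emph{crepant} birational map of pairs. This forces a careful choice of $D$: its components must contain the images of the exceptional loci with coefficient one, it must introduce no positive discrepancies, and it must simultaneously satisfy $K_X+D\sim_\qq 0$. Reconciling these constraints is the delicate heart of the argument, and it is exactly where the different families must be treated separately. Second, one must ensure that all of these choices are available for a \emph{general} member: generality is used to guarantee rationality (special members of several families are non-rational) and to guarantee that the relevant boundary strata meet transversally, so that the divisors being blown up form simple normal crossings and the modifications remain crepant.

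Assembling these constructions over all rational families yields the desired $D$ and the crepant birational equivalence $(X,D)\simeq_{\rm cbir}(\pp^3,H_0+H_1+H_2+H_3)$, hence birational complexity zero, for a general rational smooth Fano threefold. The same analysis makes transparent where each hypothesis enters, which is what one exploits to argue that rationality, generality, and smoothness cannot be dropped.
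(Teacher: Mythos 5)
Your overall architecture --- reduce via the Mori--Mukai classification to the rational families, lift boundaries through blow-ups, and handle the remaining families by promoting explicit rationality constructions (projections from lines and conics, Sarkisov-type links) to crepant maps of pairs --- matches the paper's proof in outline. But there is a genuine gap at the endgame. The chains of crepant modifications you describe do not terminate at the standard toric pair $(\pp^3,H_0+H_1+H_2+H_3)$: pushing a complement $D$ on, say, $X_{16}$ down the double projection lands you at a pair $(\pp^3,D')$ where $D'$ is a singular, typically \emph{irreducible}, quartic surface. Such a pair is not toric, and its complexity is $3+1-1=3$, so neither your criterion (``a single crepant birational model that is toric of complexity zero'') nor the complexity bound of \cite{BMSZ18} applies to it. Your alternative suggestion --- building the chain outward from $(\pp^3,\sum_i H_i)$ by crepant blow-ups at strata of the toric boundary --- cannot reach these Fanos either: the links involved (flops of lines, contractions of ruled surfaces to curves of genus $2$, etc.) have centers that are not strata of any toric boundary, and you provide no mechanism forcing the boundary at the far end of the chain to be the toric one.

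The missing ingredient is Ducat's theorem (\cite[Theorem 1.2]{Du24}, Theorem~\ref{thm-Ducat} in the paper): every lc log CY pair $(\pp^3,D)$ with reduced boundary and $\mathrm{coreg}(\pp^3,D)=0$ admits a toric model. This is the black box that converts ``some effective reduced log CY boundary on $\pp^3$'' into ``toric model,'' and it is a deep result, not bookkeeping; the analogous statement for quadrics (Lemma~\ref{lem-quadric-cluster-type}) is itself deduced from it. Correspondingly, the paper must track the coregularity-zero condition through every reduction, which is why it starts from the specific coregularity-zero $1$-complements of \cite[Theorem 0.1]{ALP23}, whose boundaries are constructed to contain the lines, conics, and singular points that serve as centers of the projections --- this containment is exactly what makes the connecting maps crepant with effective image boundary. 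Your proposal never mentions coregularity or dual complexes, and without that invariant the endpoint pair on $\pp^3$ need not admit a toric model at all: for instance $(\pp^3,S)$ with $S$ a smooth quartic surface is an index-one log CY pair admitting no toric model, since any pair with a toric model has coregularity zero. So the two constraints you identify (crepancy plus effectivity of the transformed boundary) are necessary but not sufficient, and the argument cannot close without Ducat's theorem or a substitute for it.
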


The symbol $\simeq_{\rm cbir}$ stands for crepant birational isomorphism (see Definition~\ref{def:crep-bir-isom}).
Roughly speaking, it means that the underlying varieties are birationally isomorphic and this birational isomorphism respects the boundary divisor.
In Examples~\ref{ex:generality}, \ref{ex:smoothness}, and \ref{ex:rationality}, we show that the three conditions; generality, rationality, and smoothness are indeed necessary for the main theorem.
More precisely, the tools of this article allow us to prove that in {\em most} families of smooth Fano $3$-folds, all elements admit toric models.

\begin{introthm}\label{introthrm2}
Let $X$ be a rational smooth Fano threefold.
Then, $X$ admits a toric model unless 
\begin{enumerate}
\item $X$ is a special element of the family \textnumero\,10.1 of Fano $3$-folds of the form $\pp^1\times S_1$ where $S_1$ is a smooth del Pezzo surface of degree one, 
\item $X$ is a special element of the families \textnumero\,1.6, 1.8, 1.9, 1.10 of smooth Fano threefolds in the main series with anti-canonical volume 12, 16, 18, and 22, respectively, or
\item 
the general element of its deformation family is non-rational.
\end{enumerate} 
\end{introthm}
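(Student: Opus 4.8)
The plan is to run a finite case analysis over the Mori--Mukai classification of smooth Fano threefolds into $105$ deformation families \cite{MM81}, using throughout that $X$ admits a toric model precisely when some boundary $D$ makes $(X,D)$ a log Calabi--Yau pair of birational complexity zero, i.e.\ crepant birational to $(\pp^3,H_0+H_1+H_2+H_3)$. First I would dispose of the families whose general member is non-rational: by the rationality results of Conte and Alzati--Bertolini \cite{Con82,AB92} these are explicit, and every such $X$ falls into case~(3). Thus I may assume from now on that $X$ lies in a generically rational family; for its general member a toric model already exists by Theorem~\ref{introthm:3-fold}, so the real content is to upgrade this to \emph{every} member and to pin down the finitely many special members where it fails.

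For the constructive half I would proceed family by family and, for an arbitrary member $X$, exhibit a maximally degenerate anticanonical boundary: a reduced divisor $D\in|-K_X|$ (possibly after a crepant modification) whose dual complex is a $2$-sphere, which by the toric characterization yields a complexity-zero crepant model and hence a toric model. In practice I would produce such $D$ either by realizing $X$ as a blow-up of a toric Fano threefold along centers lying in the toric boundary, so that the toric boundary pulls back crepantly, or by degenerating a net of anticanonical divisors to a cycle of rational surfaces. The essential point, reused uniformly from the proof of Theorem~\ref{introthm:3-fold}, is that these configurations persist for \emph{all} members of a given family and not merely the general one, because the relevant linear systems remain base-point free (or the blow-up centers remain in admissible position) under specialization inside the family. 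For product families such as $\pp^1\times S$ the construction is immediate: a toric model on the surface factor $S$ induces one on $\pp^1\times S$ by taking products of toric boundaries.

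The difficult half is to prove that the listed special members admit \emph{no} toric model, i.e.\ that their birational complexity is strictly positive. The main obstacle is that birational complexity is an infimum over \emph{all} crepant birational models, so a positive lower bound demands an invariant that survives every crepant modification. For the special elements of \textnumero\,$10.1$, of the form $\pp^1\times S_1$ with $S_1$ a degree-one del Pezzo surface, I would reduce the question to the surface factor $S_1$ and show that a special such $S_1$ carries no anticanonical boundary whose dual complex is a circle of maximal length: the base point of $|-K_{S_1}|$ and the attendant failure to degenerate $-K_{S_1}$ to a sufficiently long cycle of rational curves obstruct a complexity-zero log Calabi--Yau structure, and this obstruction lifts to the product. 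For the main-series families \textnumero\,$1.6,1.8,1.9,1.10$, the prime Fano threefolds $V_{12},V_{16},V_{18},V_{22}$ of Picard rank one, I would argue that the special members (those with extra automorphisms or special moduli, such as Mukai--Umemura type threefolds) admit no anticanonical boundary with a $2$-dimensional dual complex, so that every log Calabi--Yau structure has dual complex of dimension at most one and hence positive complexity. I expect this non-existence step to be the crux of the whole argument: finding the correct crepant-birational invariant and verifying, against the full classification, that it is strictly positive for exactly these finitely many special members is precisely where the soft general machinery behind Theorem~\ref{introthm:3-fold} no longer suffices and the explicit geometry of each family must be used.
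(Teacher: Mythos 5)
There are two genuine gaps here, one logical and one technical.

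First, you have misread the logical form of the statement. The theorem is one-directional: it asserts that a toric model exists \emph{except possibly} in cases (1)--(3); it does not assert that the special members of families \textnumero\,1.6, 1.8, 1.9, 1.10 or \textnumero\,10.1 fail to admit toric models. The paper is explicit that sharpness is not known: for the main-series families it states that it is unclear whether special elements even admit anti-canonical members with a $0$-dimensional log canonical center, and Problem~\ref{quest:complete-desc} leaves the full classification open. Consequently your entire ``difficult half'' --- producing a crepant-birational invariant that is strictly positive exactly on the listed special members --- is not required for the theorem, and it attempts something that is open: the only non-existence mechanisms available in the paper are non-rationality and exceptional non-canonical singularities (Lemma~\ref{lem:ex-non-can-sing}), neither of which can apply to a \emph{smooth} rational Fano threefold in the main series. (For \textnumero\,10.1 one does get an obstruction for special members via coregularity, as in Example~\ref{ex:generality}, but again this is not part of what the theorem claims.) What the proof actually requires is only the constructive direction, with the exceptional cases being precisely the families where the construction needs generality.

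Second, the constructive half rests on a step that does not exist: you assert that a reduced divisor $D\in|-K_X|$ whose dual complex is a $2$-sphere ``by the toric characterization yields a complexity-zero crepant model.'' There is no such characterization. The implication ``index one and coregularity zero $\Rightarrow$ birational complexity zero'' is exactly Ducat's conjecture (\cite[Conjecture 1.4]{Du24}), open in dimension three; the only proven case is $X\simeq\pp^3$ (Theorem~\ref{thm-Ducat}). The theorem of \cite{BMSZ18} needs the numerical bound $c(X,D)<1$, i.e.\ enough boundary components relative to $\dim X+\rho(X)$, which is far stronger than the dual complex being $2$-dimensional. This is precisely why the paper does not argue with dual complexes alone but instead builds, family by family, explicit chains of crepant birational links (projections from lines, conics, or singular points of the boundary, double projections, blow-ups followed by contractions) terminating at a coregularity-zero pair on $\pp^3$, where Ducat's theorem can be applied; the higher-index and main-series Fanos of Picard rank one are not blow-ups of anything, so your alternative mechanism of pulling back a toric boundary from a toric Fano cannot reach them. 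Finally, your persistence claim --- that the relevant configurations specialize to every member of a family --- is exactly what fails for \textnumero\,1.6--1.10 (e.g.\ on exotic members such as the Mukai--Umemura threefold all lines have normal bundle $\oo(-1)\oplus\oo(2)$ and the projection constructions break down), which is why those families appear as exceptions in the first place.
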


In the previous theorem, a {\em special element} means that it is contained in a Zariski closed subset of the corresponding deformation family of smooth Fano threefolds.
Special elements of the family \textnumero\,10.1 may not have anti-canonical elements with a $0$-dimensional log canonical center. This condition is necessary to admit a toric model. 
It is unclear whether special elements in the aforementioned families of smooth Fano threefolds in the main series admit anti-canonical elements with $0$-dimensional log canonical center.
As for the last case of Theorem \ref{introthrm2}, it is not known whether such Fano threefolds exist. Potentially, they could belong to families \textnumero\,1.3 and 1.5. 

\subsection{Coregularity} The coregularity of a pair $(X,D)$ is defined to be
\[
{\rm coreg}(X,D) := \dim X - \dim \mathcal{D}(X,D) -1,
\]
where $\mathcal{D}(X,D)$ is the dual complex associated to the pair (see Definition~\ref{def-dual-complex}). The coregularity of a Fano variety $X$ is defined to be
\[
{\rm coreg}(X):=\min \{{\rm coreg}(X,D) \mid
\text{$(X,D)$ is log Calabi--Yau}
\}
.
\]
The coregularity measures the singularities
of the anti-pluricanonical systems of $X$.
For instance, a $n$-dimensional Fano variety is {\em exceptional}
precisely when ${\rm coreg}(X)=\dim X$.
The coregularity of a $n$-dimensional Fano variety
is an integer in the set $\{0,\dots,n\}$.
The torus invariant boundary of a toric variety
is a $1$-complement~\footnote{A $n$-complement of a variety $X$ is a boundary divisor $D$ for which $(X,D)$ is log canonical 
and $n(K_X+D)\sim 0$.} of coregularity $0$.
Further, a Fano variety $X$ that admits a toric model
has a $1$-complement of coregularity $0$.
Conversely, a Fano variety of coregularity $0$ admits either a $1$-complement or a $2$-complement of coregularity zero (see~\cite[Theorem 4]{FFMP22}).
In~\cite[Theorem 2.1 and Proposition 2.3]{ALP23}, the authors prove that a general smooth del Pezzo surface has coregularity zero.
There are some examples of del Pezzo surfaces
of degree one with coregularity one (see~\cite[Remark 2.2]{ALP23}).
In~\cite{ALP23}, the first author, together with Avilov and Przyjalkowski, 
studied the coregularity of smooth Fano $3$-folds.
They proved that a general rational smooth Fano $3$-fold admits a $1$-complement of coregularity zero (see~\cite[Theorem 0.1]{ALP23}).

\subsection{Birational complexity}
The complexity $c(X,D)$ of a pair $(X,D)$ is defined to be 
\[
\dim(X)+\dim_\qq {\rm Cl}_\qq(X)-|D|
\]
where $|D|$ stands for the sum of the coefficients of $D$. 
The complexity measures whether a log Calabi--Yau pair is toric.
Indeed, for a log Calabi--Yau pair $(X,D)$, we have 
$c(X,D)\geq 0$ and $c(X,D)<1$ implies that the pair
$(X,\lfloor D\rfloor)$ is toric (see~\cite[Theorem 1.2]{BMSZ18}).
The {\em birational complexity} of a log Calabi--Yau pair $(X,D)$ is the infimum of the complexities
among its log Calabi--Yau crepant birational models
(see Definition~\ref{def:crep-bir-isom} and Definition~\ref{def:bir-comp})
In~\cite[Lemma 2.23]{MM24}, the second author and Mauri proved that the birational complexity is always a minimum.
A log Calabi--Yau pair $(X,D)$
admits a toric model if and only if
$c_{\rm bir}(X,D)=0$ (see, e.g.~\cite[Theorem 1.6]{MM24}).
In~\cite[Lemma 1.13]{GHK15b}, Gross, Hacking, and Keel proved that 
a log Calabi--Yau surface $(X,D)$ of index one
and coregularity zero has birational complexity zero.
Thus, by~\cite[Theorem 2.1]{ALP23} a general smooth del Pezzo surface
admits a toric model.
In~\cite[Theorem 3.3]{EFM24}, the authors show that this property holds for most Gorenstein del Pezzo surfaces.
Some of these Fano surfaces are of {\em cluster type} which is a property related to cluster algebras and is stronger than admitting a toric model
(see Question~\ref{quest:cluster-type}).
In~\cite{Ka20}, Kaloghiros gives an example 
of a log Calabi--Yau pair $(X,D)$ of dimension three, 
index one, and coregularity zero for which $X$ is birationally superrigid.
Thus, Kaloghiros' example does not admit a toric model.
In~\cite[Conjecture 1.4]{Du24}, Ducat conjectured that a $3$-dimensional rational 
log Calabi--Yau pair $(X,D)$ of index one and coregularity $0$ satisfies that $c_{\rm bir}(X,D)=0$.
The second author, together with Enwright and Figueroa, 
pose a higher-dimensional analog of this conjecture in~\cite[Conjecture 1.1]{EFM24}.
However, this conjecture is only known in dimension $2$ or in the case that $X\simeq \pp^3$ (see~\cite[Theorem 1.2]{Du24}).
Theorem~\ref{introthm:3-fold} is motivated by~\cite[Theorem 0.1]{ALP23}
and Ducat's conjecture. We are not aware
of higher-dimensional
general smooth rational Fano varieties 
that do not admit toric models (see Question~\ref{quest:toric-model}). 

\subsection{The approach}
Throughout this article, we may write $X_{\rho.m}$ for a smooth  Fano $3$-fold with ID equal $\rho.m$ in the classification of smooth Fano $3$-folds. In this notation, 
$\rho$ stands for the Picard rank, while $m$ is a positive integer indexing Fanos with such Picard rank.

The approach to the proof of Theorem~\ref{introthm:3-fold}
is a mixture of explicit geometry
of smooth Fano $3$-folds
and some more modern techniques
related to the birational complexity.
First, for any general rational smooth Fano $3$-fold $X$, we consider the $1$-complement
of coregularity zero $(X,D)$ constructed
by Avilov, Loginov, and Przyjalkowski in~\cite[Theorem 0.1]{ALP23}.
In Section~\ref{sec-blow-ups}, we show that 
whenever $Y$ is a smooth rational Fano $3$-folds that can be obtained as a blow-up $\pi\colon Y \rightarrow X$,   
the $1$-complement $(X,D)$ lifts
to a $1$-complement $(Y,D_Y)$. 
Then, using Proposition~\ref{prop:blow-up}, we show that $(Y,D_Y)$ admits a toric model
whenever $(X,D)$ does.
This reduces the problem to pairs of the form $(X,D)$, where $X$ is in the following well-known families of smooth rational Fano $3$-folds:
\begin{enumerate}
    \item[(i)] Fanos of higher index:
    \textnumero\,1.14,  1.15, 1.16, 1.17, 
    \item[(ii)] Fanos of main series: 
    \textnumero\,1.6, 1.8, 1.9, 1.10, 
    \item[(iii)] Some Fano $3$-folds with $\rho(X)>1$: 
    \textnumero\,2.18, 2.24, 2.32, 3.2.
\end{enumerate}
For any pair $(X,D)$ with $X$ as in (i)-(iii), 
we aim to either simplify the variety $X$ or the boundary $D$ by performing some ad-hoc birational modification.
In most cases, we seek a point $p\in X$ in the singular locus of $D$ or a line $\ell\subset X$ which is a log canonical center of $(X,D)$.
Then, we proceed to blow up either $p$ or $\ell$ to then 
perform a blow-down to a variety different from $X$.
In many cases, the aforementioned birational map is the projection from the point or the line in a suitable
projective space $\pp^N$ in which the Fano $3$-fold is embedded.
This leads to a sequence of crepant birational transformations: 
\[
\xymatrix{ 
(X,D)\ar@{-->}[r] & (X_1,D_1) \ar@{-->}[r] & \cdots \ar@{-->}[r] & (X_k,D_k)
}
\]
such that each $D_i$ is a $1$-complement and $X_k\simeq \pp^3$.
Thus, we can apply the main theorem of~\cite{Du24} to conclude that $(X,D)$ admits a toric model.
We close this subsection by presenting a diagram 
that summarizes the birational transformations of 
each Fano $3$-fold in (i)-(iii).
\[
\xymatrix{
 & & X_{22} \ar@{-->}[ld] & V_{2\cdot2\cdot2} \ar@{-->}[d] & & \\
X_{2.18} \ar@{-->}[d] & V_5 \ar@{-->}[d] & X_{18} \ar@{-->}[dl] & X_{12} \ar@{-->}[lld] &  X_{2.32} \ar@{-->}[llld] & X_{3.2} \ar@{-->}[ld] \\
Q_{\rm sing} \ar@{-->}[rrd] & Q \ar@{-->}[rd] & X_{16} \ar@{-->}[d] & V_4 \ar@{-->}[ld] & Y \ar@{-->}[lld] & X_{2.24} \ar@{-->}[d] \\
 & & \pp^3 & & & \pp^1\times \pp^2 \\
}
\]
In the previous diagram, $V_d$ denotes for a del Pezzo $3$-fold $X$ for which $(-K_{V_d}/2)^3=d$; 
We write $X_d$ for a smooth Fano $3$-fold in the main series with $(-K_{X_d})^3=d$; 
$Y$ stands for a special blow-down of $X_{3.2}$ (see proof of Lemma~\ref{lemma:3-2}); 
$V_{2\cdot 2 \cdot 2}$ stands for a singular intersection of three quadrics in $\pp^6$;
$Q$ stands for a smooth quadric in $\pp^4$
and $Q_{\rm sing}$ stands for a singular quadric in $\pp^4$.
Case (i) will be treated in Section~\ref{sec-higher-index}. Case (ii) will be treated in Section~\ref{sec-main-series}. Finally, Case (iii) will be treated in Section~\ref{sec-higher-picard}.

\subsection*{Acknowledgements}
The first author is supported by Russian Science Foundation under grant 24-71-10092. 
 The authors thank Alexander Kuznetsov, Yuri Prokhorov, Yuri Tschinkel, and Zhijia Zhang for helpful conversations.

\section{Preliminaries}

In what follows, all varieties are
projective and defined over $\mathbb{C}$ unless stated otherwise. We
use the language of the minimal model program (the MMP for short), see
e.g.~\cite{KM98}.

\subsection{Contractions} By a \emph{contraction} we mean a surjective
morphism $f\colon X \to Y$ of normal varieties such that $f_*\oo_X =
\oo_Y$. In particular, $f$ has connected fibers. A
\emph{fibration} is defined as a contraction $f\colon X\to Y$ such
that $\dim Y<\dim X$.

\subsection{Pairs and singularities} A \emph{pair} $(X, D)$ consists
of a normal variety $X$ and a boundary $\mathbb{Q}$-divisor $D$ with
coefficients in $[0, 1]$ such that $K_X + D$ is $\mathbb{Q}$-Cartier.
Let $\phi\colon W \to X$ be a
log resolution of $(X,D)$ and let $K_W +D_W = \phi^*(K_X +D)$.
The \emph{log discrepancy} of a prime divisor $E$ on $W$ with respect
to $(X, D)$ is $1 - \mathrm{coeff}_E D_W$ and it is denoted by $a(E, X, D)$. We
say $(X, D)$ is lc (resp. klt) if $a(E, D, B)$
is $\geq 0$ (resp.~$> 0$) for every $E$. We say that the pair is \emph{plt}, if $a(E, X, D)>0$ holds for any $\phi$-exceptional divisor $D$ and for any log resolution $\phi$. 
We say that the pair is \emph{dlt}, if $a(E, X, D)>0$ hold for any $\phi$-exceptional divisor $E$ and for some log resolution $\phi$.

\begin{definition}\label{def:crep-bir-isom}
{\em 
Let $(X,D)$ and $(Y,D_Y)$ be two pairs.
We say that these two pairs
are {\em crepant birationally equivalent} if there exists a commutative diagram
\begin{equation}
\begin{tikzcd}
& (V, D_V) \ar[rd, "\psi"] \ar[dl, swap, "\phi"] & \ \\
(X, D) \ar[rr, dashed, "\alpha"] & & (Y, D_Y)
\end{tikzcd}
\end{equation}
where $\alpha$ is a birational map, $\phi$ and $\psi$ are birational contractions, $\phi_*(D_V)=D$, $\psi_*(D_V) = D_Y$, and 
\[
K_V + D_V=\phi^*(K_X+D) = \psi^*(K_Y + D_Y).
\]
Note that here $D_V$ is not necessarily a boundary since it may have negative coefficients.  
In the previous setting, we write $(X,D)\simeq_{\rm cbir} (Y,D_Y)$.
Given $(X,D)$ and $\alpha\colon X \dashrightarrow Y$ as above, there is a unique divisor $D_Y$ for which $(X,D) \sim_{\rm cbir} (Y,D_Y)$. 
The divisor $D_Y$ is called the {\em crepant transform} of $D$ on $Y$.
}
\end{definition}

\subsection{Complements and log Calabi-Yau pairs}
\label{sect-log-CY}
Let $(X, D)$ be a log canonical pair. 
We say $(X,D)$ is \emph{log Calabi-Yau pair} (or \emph{log CY} for short) if $K_X + D
\sim_{\mathbb{Q}} 0$. In this case, $D$ is called a {\em $\mathbb{Q}$-complement} of $K_X$. If $N(K_X + D)\sim 0$ for some $N$, we say that $D$ is an {\em $N$-complement} of $K_X$. If $(X,D)$ is a log Calabi--Yau pair
and $(Y,D_Y)\sim_{\rm cbir} (X,D)$, with $D_Y\geq 0$, 
then $(Y,D_Y)$ is a log Calabi--Yau pair as well.

\subsection{Dual complex and coregularity}
Let $D=\sum D_i$ be a Cartier divisor on a smooth variety $X$. Recall that $D$ has \emph{simple normal crossings} (snc for short), if all the components $D_i$ of $D$ are smooth, and any point in $D$ has an open neighborhood in the analytic topology that is analytically equivalent to the union of coordinate hyperplanes.

\begin{definition}
\label{def-dual-complex}
{\em 
Let $D=\sum_{i=1}^r D_i$ be a simple normal crossing divisor on a smooth variety $X$.
The 
\emph{dual complex}, denoted by $\mathcal{D}(D)$ is a CW-complex constructed as follows.  
The simplices $v_Z$ of $\mathcal{D}(D)$ are in bijection with irreducible components $Z$ of the intersection $\bigcap_{i\in I} D_i$ for any subset $I\subseteq \{ 1, \ldots, r\}$, and the dimension of $v_Z$ is equal to $\#I-1$.
The gluing maps are constructed as follows. 
For any subset $I\subseteq \{ 1, \ldots, r\}$, let $Z\subset \bigcap_{i\in I} D_i$ be any irreducible component, and for any $j\in I$ let $W$ be a unique component of $\bigcap_{i\in I\setminus\{j\}} D_i$ containing $Z$. Then the gluing map is the inclusion of $v_W$ into $v_Z$ as a face of $v_Z$ that does not contain the vertex $v_j$. 
}
\end{definition}

Note that the dimension of $\mathcal{D}(D)$ does not exceed $\dim X-1$. If $\mathcal{D}(D)$ is empty, we set $\dim \mathcal{D}(D)=-1$. In what follows, for a divisor $D$ by $D^{=1}$, we denote the reduced sum of the components of $D$ with coefficient one. For an lc log CY pair $(X, D)$, we define $\mathcal{D}(X, D)$ as $\mathcal{D}(D_Y^{=1})$ where $f\colon (Y, D_Y)\to (X, D)$ is a log resolution of $(X, D)$, so that the formula
\[
K_{Y} + D_Y= f^*(K_X + D)
\]
is satisfied. It is known that the PL-homeomorphism class of $\mathcal{D}(D_Y^{=1})$ does not depend on the choice of a log resolution, see \cite[Proposition 11]{dFKX17}. 
For more results on the topology of dual complexes of Calabi-Yau pairs, see \cite{KX16}. 

\begin{definition}[{\cite[7.9]{Sho00}},{\cite{Mor22}}]
\label{defin-regularity}
{\em 
Let $X$ be a klt Fano variety of dimension $n$. By the \emph{regularity} $\mathrm{reg}(X, D)$ of an lc log CY pair $(X, D)$ we mean the number $\dim \mathcal{D}(X, D)$. For $l\geq 1$, we define the {\em $l$-th regularity} of $X$ by the formula 
\[
\mathrm{reg}_l(X) = \max \left\{ \mathrm{reg}(X, D)\ |\ D\in \frac{1}{l} |-lK_X| \right\}.
\]
Then the \emph{regularity} of $X$ is 
\[
\mathrm{reg}(X) = \max_{l\geq 1} \{\mathrm{reg}_l(X)\}.
\]
Note that $\mathrm{reg}(X)\in \{-1, 0,\ldots, \dim X-1\}$ where by convention we say that the dimension of the empty set is $-1$. The \emph{coregularity} of an lc log CY pair $(X, D)$ is defined as the number $n-1-\mathrm{reg}(X, D)$. Also, we define:
\[
\mathrm{coreg}_l(X) = n-1-\mathrm{reg}_l(X),\quad \text{ and } \quad \quad \mathrm{coreg}(X) = n-1-\mathrm{reg}(X).
\]
Clearly, one has $\mathrm{reg}_l(X)\leq \mathrm{reg}_{kl} (X)$ for any $k,l\geq1$.
}
\end{definition}

\subsection{Toric models} In this subsection, we recall the concepts of complexity, birational complexity, and toric models for log pairs.

\begin{definition}\label{def:comp}
{\em 
Let $(X,D)$ be a log pair. We define the {\em complexity} of $(X,D)$, denoted by $c(X,D)$ to be $\dim X +\dim_\qq {\rm Cl}_\qq(X) -|D|$, where $|D|$ stands for the sum of the coefficients of $D$.
}
\end{definition}

\begin{definition}\label{def:bir-comp}
{\em 
The {\em birational complexity} of a log Calabi--Yau pair $(X,D)$ is defined to be:
\[
c_{\rm bir}(X,D):=\inf\{ 
c(X',D') \mid (X',D') \simeq_{\rm cbir}(X,D)
\text{ and $D'\ge 0$}
\}.
\]
The {\em birational complexity} of a Fano variety $X$, on the other hand, is defined to be:
\[
c_{\rm bir}(X):=\inf\{ c_{\rm bir} (X,D) \mid 
\text{
$(X,D)$ is log Calabi--Yau
}
\}.
\]
}
\end{definition}

\begin{definition}\label{def:toric-model}
{\em 
We say that a log CY pair $(X, B)$ has a {\em toric model} if 
the isomorphism holds
\[
(X,D) \simeq_{\rm bir} (\pp^n,H_0+\dots+H_n),
\]
for $n=\dim X$. 
We say that a Fano variety has a {\em toric model} if it admits a $1$-complement $D$ such that $(X,D)$ has a toric model. 
Note that if a Fano variety $X$ has a toric model then it must be rational. It is worth noting that any toric pair $(X, D)$ where $D$ is the reduced sum of the torus invariant divisors is crepant birational equivalent to $(\pp^n,H_0+\dots+H_n)$.
}
\end{definition} 

\subsection{Smooth toric Fano threefolds}
In this subsection, we recall the list of smooth toric Fano threefolds.
\begin{proposition}[{\cite{IP99}}]

\label{prop-toric-fanos}
Let $X_{\rm \rho.m}$ be a smooth Fano $3$-fold. 
Then, the variety $X_{\rm \rho.m}$ is toric if and only if
\[
\rho.m \in 
\{\,1.17,\ 2.33,\ 2.34,\ 2.35,\ 2.36,\ 3.25,\ 3.26,\ 3.27,\ 3.28,\ 3.29,\
3.30,\ 3.31,\ 4.9,\ 4.10,\ 4.11,\ 4.12,\ 5.2,\ 5.3
\}.
\]
In particular, any such $X_{\rm \rho.m}$ admits a toric model.
\end{proposition}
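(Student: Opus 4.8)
The plan is to reduce the statement to the classification of smooth toric Fano threefolds, which is finite and completely understood. A complete smooth toric variety is encoded by its fan, and the Fano condition means that the primitive generators of the rays are the vertices of a smooth Fano polytope: a lattice polytope with the origin in its interior whose vertices are primitive and such that the vertices of each facet form a lattice basis. Batyrev and Watanabe--Watanabe classified such polytopes in dimension three, obtaining exactly $18$ isomorphism classes of smooth toric Fano threefolds. I would take this classification as input.

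For the forward implication, I would match each of the $18$ smooth toric Fano threefolds to its identifier in the Mori--Mukai classification. This matching is performed by computing discrete invariants, most importantly the Picard rank $\rho$, which for a toric threefold equals the number of rays minus three, together with the anticanonical degree $(-K_X)^3$ and the description of $X$ as an iterated projective bundle or toric blow-up. In Picard rank one the only smooth toric Fano is $\pp^3=X_{1.17}$; in rank two one recovers the four members \textnumero\,2.33--2.36, among them $\pp^1\times\pp^2$ and the blow-up of $\pp^3$ at a point; and in ranks three, four, and five one obtains the seven, four, and two families \textnumero\,3.25--3.31, \textnumero\,4.9--4.12, and \textnumero\,5.2--5.3. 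Since $1+4+7+4+2=18$, the displayed list accounts for every toric member. The reverse implication is then a consequence of the completeness of the classification: any $X_{\rho.m}$ whose identifier is absent from the list cannot be toric, since all $18$ toric threefolds have already been assigned identifiers in the list.

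Finally, the last assertion is immediate. If $X=X_{\rho.m}$ is toric and $B=\sum_i D_i$ is the reduced sum of its torus-invariant prime divisors, then $K_X+B\sim 0$, so $B$ is a $1$-complement of $X$; by the remark in Definition~\ref{def:toric-model}, the toric pair $(X,B)$ is crepant birationally equivalent to $(\pp^3,H_0+H_1+H_2+H_3)$, and therefore $X$ admits a toric model.

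The only laborious step is the dictionary between the fan-theoretic classification and the Mori--Mukai identifiers, since it requires identifying each abstract toric threefold with its named family through its invariants and explicit geometry. This comparison is recorded in \cite{IP99}, so in practice the proposition follows by combining the classification of smooth toric Fano threefolds with that standard table.
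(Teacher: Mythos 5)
Your proposal is correct and matches the paper's treatment: the paper gives no independent proof but simply cites \cite{IP99}, whose content is exactly the classification of the $18$ smooth toric Fano threefolds and their identification with Mori--Mukai families that you outline, and your handling of the final assertion via the remark in Definition~\ref{def:toric-model} (any toric pair with reduced invariant boundary is crepant birational to $(\pp^3,H_0+H_1+H_2+H_3)$) is precisely the intended argument.
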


\section{Fano threefolds of higher index}

\label{sec-higher-index}

Recall that the {\em index} $i(X)$ of a Fano variety $X$ is a maximal natural number $m$ such that $-K_X\sim mH$ where $H$ is a Cartier divisor.
It is well-known that if $X$ is a smooth Fano threefold then $i(X)\leq 4$. Moreover, if $i(X)=4$ then $X\simeq \mathbb{P}^3$, and if $i(X)=3$ then $X$ is isomorphic to a quadric in $\mathbb{P}^4$. In this section, we treat the case of Fano threefolds of Picard rank $1$ and index at least $2$.

\begin{theorem}[{\cite[Theorem 1.2]{Du24}}]
\label{thm-Ducat}
\label{thm-projective-space-toric-model}
Let $(\mathbb{P}^3, D)$ be an lc log CY pair where $D=\sum D_i$ is a reduced divisor. If $\mathrm{coreg}(\mathbb{P}^3, D)=~0$, then $(\mathbb{P}^3, D)$ admits a toric model. 
\end{theorem}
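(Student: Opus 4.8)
The plan is to produce a chain of crepant birational modifications carrying $(\mathbb{P}^3, D)$ to the standard toric pair $(\mathbb{P}^3, H_0 + \cdots + H_3)$, and to certify toricity at the end via the complexity bound of Brown--McKernan--Svaldi--Zong: a log Calabi--Yau pair of complexity $< 1$ has $(X, \lfloor D \rfloor)$ toric (see~\cite{BMSZ18}). Since $\dim_{\mathbb{Q}} \mathrm{Cl}_{\mathbb{Q}}(\mathbb{P}^3) = 1$, one computes $c(\mathbb{P}^3, D) = 4 - |D|$, and because $-K_{\mathbb{P}^3} \sim \mathcal{O}(4)$ the reduced boundary satisfies $\deg D = 4$. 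Hence every irreducible component has degree at least one, the number of components is at most four, and the only configuration that is directly toric is four planes in general position, where $|D| = 4$ and $c = 0$. Every boundary containing a component of degree $\geq 2$ (a quadric plus two planes, a cubic plus a plane, an irreducible quartic, and their degenerations) has strictly fewer components and strictly positive complexity, so for those the toric model must be reached only after genuine birational surgery.

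First I would extract the geometric content of the coregularity-zero hypothesis. By definition $\mathrm{coreg}(\mathbb{P}^3, D) = 0$ forces $\dim \mathcal{D}(\mathbb{P}^3, D) = 2$, so after passing to a dlt modification the dual complex is a PL surface and the pair carries a $0$-dimensional log canonical center $p$; in the notation of Definition~\ref{def-dual-complex} this $0$-dimensional stratum is exactly the datum producing a $2$-simplex. This center is the crucial seed of the argument. Near $p$ either three boundary divisors meet transversally or a single high-degree component acquires a sufficiently degenerate singularity, and the case subdivision is organized by the degrees of the irreducible components of $D$ together with the location of $p$ and of the lines and curves through $p$ that are one-dimensional lc centers.

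The core of the proof is then the case-by-case construction of the crepant birational map. The recurring move is to blow up a line $\ell$ or the point $p$ that is an lc center and to contract the resulting exceptional locus in a different direction, typically realizing the linear projection from $\ell$ or $p$ inside the ambient space (or inside a larger $\mathbb{P}^N$ into which a suitable intermediate model embeds). Because $\ell$ and $p$ lie on the boundary and are lc centers, the adjunction bookkeeping $K_V + D_V = \phi^*(K_X + D) = \psi^*(K_Y + D_Y)$ of Definition~\ref{def:crep-bir-isom} is controlled along the center, so the modifications are crepant for the log Calabi--Yau structure. Each such move trades an irreducible high-degree component for one or more lower-degree components, strictly increasing $|D'|$ while keeping the pair log Calabi--Yau of coregularity $0$; iterating drives the complexity to $0$, at which point~\cite{BMSZ18} makes the model toric with reduced boundary, and by the remark in Definition~\ref{def:toric-model} such a pair is crepant birational to $(\mathbb{P}^3, H_0 + \cdots + H_3)$, giving $(\mathbb{P}^3, D) \simeq_{\rm cbir} (\mathbb{P}^3, H_0 + \cdots + H_3)$.

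The hard part will be guaranteeing that each projection genuinely lowers the degree of the worst component while neither destroying log canonicity nor dropping the coregularity below $0$; equivalently, that a $0$-dimensional lc center persists on every intermediate model. For the most degenerate boundaries, an irreducible quartic whose only lc center is a single very singular point, it is not clear that one projection suffices, and one may be forced to first perform an auxiliary crepant blow-up to create an extra divisorial or one-dimensional lc center to project from. Verifying that this bootstrapping terminates, and that the Picard rank and class group stay under control so that the final complexity computation $c = 4 - |D|$ (or its analogue on the terminal model) applies, is where the real work lies.
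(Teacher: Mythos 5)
The first thing to note is that the paper does not prove this statement at all: it is imported verbatim from Ducat's work as \cite[Theorem 1.2]{Du24} and used as a black box throughout Sections 3--6. So there is no internal proof to compare against, and your attempt has to be judged as a reproof of Ducat's theorem on its own merits.

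Judged that way, your outline starts along the right lines: the numerical bookkeeping ($\deg D = 4$, $c(\mathbb{P}^3,D) = 4 - |D|$, four planes being the only directly toric configuration), the extraction of a $0$-dimensional lc center from $\mathrm{coreg}(\mathbb{P}^3,D)=0$ via the dual complex, and the plan of blowing up lc centers, projecting, and certifying the endpoint with the complexity bound of \cite{BMSZ18} all match the general shape of the actual argument in \cite{Du24}. But there is a genuine gap, and you in fact name it yourself: the entire difficulty of the theorem is concentrated in the boundaries with a component of high degree, above all an irreducible quartic whose lc centers are a single point or a low-degree curve. For these cases your proposal supplies no mechanism. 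Saying that ``one may be forced to first perform an auxiliary crepant blow-up'' and that verifying termination ``is where the real work lies'' is a statement of the problem, not a proof step. Nothing in the sketch shows that a projection from a singular point or a line of the quartic yields a boundary with more components, nor that the process terminates: on intermediate models the Picard rank grows, the formula $c = 4 - |D|$ no longer applies, and the complexity need not drop. Ducat's proof handles exactly this by classifying the possible maximal lc centers and singularities of coregularity-zero quartics and exhibiting, case by case, explicit volume-preserving Cremona transformations adapted to each; no part of your proposal substitutes for that analysis. As written, this is a correct strategy statement with the core of the proof missing.
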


\begin{lemma}
\label{lem-singular-boundary}
Let $(X, D=\sum D_i)$ be an lc log CY pair where $X$ is a smooth Fano threefold. Assume that $\mathrm{coreg}(X,D)\leq 1$. Then there is a point $x\in X$ such that $D$ is singular in $x$.
\end{lemma}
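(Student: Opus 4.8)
The plan is to argue by contraposition: I would show that if $D$ has no singular point on $X$, then $\mathrm{coreg}(X,D)=2$, contradicting the hypothesis $\mathrm{coreg}(X,D)\le 1$. The starting point is purely numerical. Since $\dim X=3$, the definition of coregularity gives
\[
\mathrm{coreg}(X,D)=3-1-\dim\mathcal{D}(X,D)=2-\dim\mathcal{D}(X,D),
\]
so the assumption $\mathrm{coreg}(X,D)\le 1$ is equivalent to $\dim\mathcal{D}(X,D)\ge 1$. Hence it suffices to prove that if $D$ is smooth at every point of $X$, then $\dim\mathcal{D}(X,D)=0$.

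So suppose $D$ is smooth at every point of $X$. Because $X$ is smooth and $D=\sum D_i$ is reduced, smoothness of $D$ everywhere forces the components $D_i$ to be smooth and pairwise disjoint: at a point lying on two distinct components $D$ would be reducible (hence singular), and at a singular point of a single component $D$ would fail to be smooth as well. In particular the pair $(X,D)$ is already simple normal crossing, so the identity $\mathrm{id}\colon X\to X$ is a log resolution and $\mathcal{D}(X,D)=\mathcal{D}(D)$ may be computed directly on $X$. Since the intersections $\bigcap_{i\in I}D_i$ are empty for every index set $I$ with $\#I\ge 2$, the only simplices of $\mathcal{D}(D)$ are the vertices $v_{D_i}$; thus $\mathcal{D}(D)$ is a finite disjoint union of points and $\dim\mathcal{D}(D)=0$, provided $D\neq 0$.

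It remains to record that $D\neq 0$. As $(X,D)$ is log Calabi--Yau we have $D\sim_\qq -K_X$, which is ample because $X$ is Fano; hence $D$ is a nonzero effective divisor and has at least one component. Therefore $\dim\mathcal{D}(X,D)=0$ and $\mathrm{coreg}(X,D)=2$, contradicting $\mathrm{coreg}(X,D)\le 1$. This establishes the contrapositive and hence the lemma.

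As for difficulty, the argument is essentially bookkeeping with the definitions; the only point requiring genuine care is the passage between the \emph{geometric} assertion that $D$ is singular on $X$ and the \emph{combinatorial} coregularity, which is a priori read off from a log resolution $Y\to X$ rather than from $X$ itself. The contrapositive formulation sidesteps this subtlety by reducing to the snc case, where the identity serves as a log resolution (and the independence of $\mathcal{D}(X,D)$ of the chosen resolution, cited after Definition~\ref{def-dual-complex}, guarantees no discrepancy with the general definition). If one instead allows $D$ to carry fractional coefficients, the identical reasoning applied to the reduced part $D^{=1}$ shows that $D^{=1}$, and a fortiori $D$, is singular at some point.
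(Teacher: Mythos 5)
Your proof is correct and takes essentially the same route as the paper: argue by contradiction/contraposition that a smooth $D$ makes $(X,D)$ log smooth with zero-dimensional dual complex, hence $\mathrm{coreg}(X,D)=2$. The only cosmetic difference is that the paper invokes ampleness of $D$ to get connectedness and hence irreducibility (so the dual complex is a single point), whereas you allow several pairwise disjoint smooth components and simply observe that a finite set of points still has dimension $0$ — using ampleness only to rule out $D=0$.
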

\begin{proof}
Assume that $D$ is smooth. Then $D$ is irreducible (recall that $D$ is ample so its support is connected). Then the pair $(X, D)$ is log smooth, and its dual complex is a point, so $\mathrm{coreg}(X, D)=2$. This contradiction shows that $D$ is singular. 
\end{proof}

\begin{lemma}
\label{lem-quadric-cluster-type}
Let $(Q, D=\sum D_i)$ be an lc log CY pair where $Q\subset \mathbb{P}^4$ is a smooth quadric and $\mathrm{coreg}(Q, D)\leq 1$. Then $(Q, D)$ is crepant birational to $(\mathbb{P}^3, D'=\sum D'_i)$. In particular, if $\mathrm{coreg}(Q, D)=0$ then $(Q, D)$ admits a toric model. Consequently, $Q$ admits a toric model.
\end{lemma}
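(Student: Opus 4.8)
The plan is to produce an explicit birational map from the smooth quadric $Q\subset\pp^4$ to $\pp^3$ that respects the boundary, and then invoke Ducat's theorem (Theorem~\ref{thm-Ducat}) on the target. The natural candidate for the birational map is the projection $\pi_x\colon Q \dashrightarrow \pp^3$ from a suitable point $x\in Q$; this is the classical stereographic projection, and it is well known to be birational because a general line through $x$ meets the quadric in exactly one further point. The key to making this crepant is choosing $x$ so that the projection interacts well with $D$. Since $\mathrm{coreg}(Q,D)\leq 1$, Lemma~\ref{lem-singular-boundary} (whose proof applies verbatim to $Q$, as $Q$ is a smooth Fano threefold) guarantees a point $x\in Q$ at which $D$ is singular. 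I would take this $x$ as the center of projection, since a singular point of the boundary is a log canonical center of positive codimension and blowing it up produces exceptional divisors that can be matched crepantly.

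Concretely, I would resolve $\pi_x$ by first blowing up $x$. The blow-up $\sigma\colon \widetilde{Q}\to Q$ has exceptional divisor $E\cong\pp^2$ (or a quadric cone's resolution component, but here $Q$ and $x$ are smooth so $E\cong \pp^2$), and the strict transform of $Q$ maps to $\pp^3$ by contracting the strict transform of the unique hyperplane section of $Q$ that is singular at $x$ — namely the tangent cone, which is a quadric cone in the tangent hyperplane. In the standard description, $\widetilde{Q}$ is simultaneously the blow-up of $\pp^3$ along a conic $C$ (the image of $E$), so we obtain a diagram
\[
\begin{tikzcd}
& \widetilde{Q} \ar[dl, swap, "\sigma"] \ar[dr, "\tau"] & \\
Q \ar[rr, dashed, "\pi_x"] & & \pp^3.
\end{tikzcd}
\]
I would then define $D'=(\pi_x)_* D$ to be the crepant transform of $D$, let $D_{\widetilde{Q}}$ be the divisor on $\widetilde{Q}$ satisfying $K_{\widetilde{Q}}+D_{\widetilde{Q}}=\sigma^*(K_Q+D)=\tau^*(K_{\pp^3}+D')$, and check $\sigma_* D_{\widetilde{Q}}=D$ and $\tau_* D_{\widetilde{Q}}=D'$. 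This gives $(Q,D)\simeq_{\rm cbir}(\pp^3,D')$ in the sense of Definition~\ref{def:crep-bir-isom}.

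The main obstacle is verifying that $D'$ is genuinely a boundary, i.e. that $D'\geq 0$ with coefficients in $[0,1]$, so that $(\pp^3,D')$ is again an lc log CY pair to which Theorem~\ref{thm-Ducat} applies. The danger is that the contracted divisor $\tau$-exceptional over the conic $C$, or the image of the $\sigma$-exceptional $E$, could enter the crepant transform with a wrong coefficient, or that a component of $D$ could be contracted by $\tau$ and reappear with a negative or too-large coefficient. Here the hypothesis that $x$ is a singular point of $D$ is exactly what is needed: because $D$ has multiplicity at least two along $x$, the log discrepancy computation shows that pulling back along $\sigma$ keeps $E$ in $D_{\widetilde{Q}}$ with coefficient in $[0,1]$, and since $\tau$ contracts $E$ onto the conic $C\subset\pp^3$, the transform $D'$ picks up $C$-related behavior compatibly; crepancy is automatic by construction while effectiveness and the coefficient bound follow from the lc hypothesis on $(Q,D)$ preserved under crepant birational maps. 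Once $(\pp^3,D')$ is confirmed lc log CY with $\mathrm{coreg}=0$ (coregularity being a crepant birational invariant of the log CY pair), Theorem~\ref{thm-Ducat} yields a toric model of $(\pp^3,D')$, hence of $(Q,D)$, and therefore $Q$ admits a toric model by Definition~\ref{def:toric-model}.
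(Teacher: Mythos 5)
Your argument for the first two assertions is essentially the paper's own proof: you use Lemma~\ref{lem-singular-boundary} (which applies directly, since $Q$ is a smooth Fano threefold --- no need to rerun its proof) to find a singular point $x$ of $D$, project stereographically from $x$ to $\pp^3$, and apply Theorem~\ref{thm-Ducat} to the resulting pair. The extra detail you supply is correct and fills in what the paper leaves implicit: resolving the projection by $\sigma\colon \widetilde{Q}=\mathrm{Bl}_x Q\to Q$ and $\tau\colon \widetilde{Q}\to\pp^3$, the $\sigma$-exceptional divisor $E$ enters the crepant pullback with coefficient $\mult_x D-2$, which lies in $[0,1]$ exactly because $D$ is singular at $x$ (so $\mult_x D\geq 2$) and $(Q,D)$ is lc (so $\mult_x D\leq 3$); hence $D'=\tau_*D_{\widetilde{Q}}$ is a boundary and coregularity is preserved under the crepant equivalence. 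One minor slip: the conic $C$ is the image of the $\tau$-exceptional divisor, namely the strict transform of the tangent hyperplane section $T_xQ\cap Q$, not the image of $E$; the divisor $E$ maps isomorphically onto the plane spanned by $C$. This does not affect the argument.

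There is, however, a genuine gap in the last step. The final assertion, ``Consequently, $Q$ admits a toric model,'' does not follow ``by Definition~\ref{def:toric-model}'' from what you proved: your argument shows only that \emph{if} a log CY pair $(Q,D)$ with $\mathrm{coreg}(Q,D)=0$ exists, then it (and hence $Q$) has a toric model. To conclude unconditionally, one must exhibit at least one such boundary on the quadric, and this existence statement is a separate, nontrivial input. The paper supplies it by citing \cite[Lemma 5.3]{ALP23}, which constructs a boundary $D$ on $Q$ with $\mathrm{coreg}(Q,D)=0$; without this (or some explicit construction, e.g.\ a suitable union of hyperplane sections of $Q$ verified to be lc, anticanonical, and of coregularity zero), the final clause of the lemma remains unproven.
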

\begin{proof}
By Lemma \ref{lem-singular-boundary}, there exists a point $Q\in X$ which is a singular point of $D$.  
Now, project $Q$ from the point $P$ to $\mathbb{P}^3$ to obtain a pair $(\mathbb{P}^3, D'=\sum D'_i)$ which is crepant equivalent to $(Q, D)$. Apply Theorem \ref{thm-projective-space-toric-model} to conclude that if $\mathrm{coreg}(Q, D)=0$ then $(Q, D)$ admits a toric model. Further, there exists a boundary $D$ on $Q$ with $\mathrm{coreg}(Q, D)=0$, cf. \cite[Lemma 5.3]{ALP23}. This concludes the proof.
\end{proof}

If $i(X)=2$ then $X$ is called a \emph{del Pezzo threefold}. Put $d=(-K_X/2)^3$. Then $1\leq d \leq 5$. In this section, we assume that $\rho(X)=1$. We denote the corresponding Fano threefold by $V_d$. It is known that $V_1$, $V_2$ and $V_3$ are non-rational, so we have to consider the varieties $V_4$ and $V_5$. By a \emph{line} on $X$ we mean a smooth rational curve with $(-K_X/2)\cdot L=1$. 
The following lemma is well-known.

\begin{lemma}
\label{lem-dP-line}
Let $X=V_d$ be a smooth del Pezzo threefold with $d\geq 3$ embedded into projective space via the linear system $|-K_X/2|$. Then through any point $x\in X$ passes a line that belongs to $X$.
\end{lemma}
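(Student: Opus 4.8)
The plan is to prove the statement pointwise: for a fixed $x\in X$ I would parametrize the lines of $X$ through $x$ as a closed subscheme of a projective space and show it is nonempty by a pure dimension count, using that homogeneous forms on $\pp^m$ always have a common nontrivial zero as soon as their number is at most $m$ (equivalently, that in $\pp^n$ two closed subvarieties with $\dim Y+\dim Z\ge n$ meet). Since $\rho(X)=1$ and $d\ge 3$, we have $X\in\{V_3,V_4,V_5\}$, so I would treat these three concrete projective models separately. A direction $v\in\pp(T_x\pp^{d+1})$ spans a line $\ell_v=\langle x,v\rangle$, and the first-order (tangency) condition $\ell_v\subset T_xX$ cuts out the linear subspace $\pp(T_xX)\cong\pp^2$; the content is that the remaining higher-order conditions for $\ell_v\subset X$ still admit a common solution on $\pp^2$. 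Any such $v$ yields a genuine line, since $(-K_X/2)\cdot\ell_v=H\cdot\ell_v=1$.

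For $X=V_3\subset\pp^4$ a smooth cubic and $X=V_4\subset\pp^5$ an intersection of two quadrics, $X$ is a complete intersection, so I would simply expand the defining forms along $\ell_v=\{x+tv\}$. Tangency forces $v\in\pp(T_xX)\cong\pp^2$, and the surviving conditions are: for $V_3$, the vanishing of the quadratic and cubic coefficients in $t$ (a conic and a cubic in $v$); for $V_4$, the vanishing of the two leading quadratic coefficients (two conics in $v$). In either case these are two homogeneous forms on $\pp^2$, whose common zero locus has dimension $\ge 0$ and is therefore nonempty, producing a line $\ell_v\subset X$ through $x$.

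For $X=V_5$ I would instead use the model $V_5=\mathrm{Gr}(2,5)\cap\pp^6$ inside the Plücker space $\pp^9$. The lines of $\mathrm{Gr}(2,5)$ through $x$ have directions forming the Segre threefold $\pp^1\times\pp^2\hookrightarrow\pp^5=\pp(T_x\mathrm{Gr}(2,5))$, which has degree $3$. Because $\pp^6$ is linear and $V_5$ is smooth at $x$, the space $\pp(T_xV_5)=\pp\big(T_x\mathrm{Gr}(2,5)\cap T_x\pp^6\big)$ is a linear $\pp^2\subset\pp^5$, and a direction $v$ there spans a line contained in $\mathrm{Gr}(2,5)\cap\pp^6=V_5$ exactly when $v$ lies on the Segre threefold. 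Thus the lines of $V_5$ through $x$ correspond to $(\pp^1\times\pp^2)\cap\pp(T_xV_5)$ inside $\pp^5$, and since $3+2=5=\dim\pp^5$ this intersection is nonempty, giving a line through $x$.

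The main obstacle is precisely the case $V_5$: unlike $V_3$ and $V_4$ it is not a complete intersection, so the naive count (the five Plücker quadrics restrict to five conics on $\pp(T_xV_5)$) gives too many conditions to force a common zero on $\pp^2$. Passing to the ambient Grassmannian and using that its minimal rational tangents form a degree-$3$ Segre threefold, rather than a complete intersection of quadrics, is what restores the clean ``complementary dimension'' bound. I would emphasize that the argument is genuinely uniform in spirit and yields a line through \emph{every} point of $X$, not merely a general one, which is exactly what the later applications require; the hypothesis $d\ge 3$ enters through the availability of these projective models and the fact that the relevant dimensions add up to at least the ambient one.
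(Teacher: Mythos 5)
Your proof is correct, but it takes a genuinely different route from the paper. The paper's proof is a two-line citation: by \cite[Proposition 2.2.8]{KPrSh18} the Hilbert scheme of lines $F_1(X)$ is a smooth surface, and by \cite[Proposition 2.2.6]{KPrSh18} the evaluation map from the universal family $\mathcal{L}(X)\to X$ is surjective, which is exactly the statement. You instead give a self-contained, case-by-case argument via the projective dimension theorem: for $V_3$ and $V_4$, the classical expansion of the defining forms along $x+tv$ reduces the claim to the nonemptiness of the common zero locus of two forms (a conic and a cubic, resp.\ two conics) on $\mathbb{P}(T_xX)\cong\mathbb{P}^2$, which is automatic; for $V_5$, where the complete-intersection count fails, you correctly replace it by the fact that the tangent directions of lines of $\mathrm{Gr}(2,5)$ through $x$ form the Segre threefold $\mathbb{P}^1\times\mathbb{P}^2\subset\mathbb{P}^5=\mathbb{P}(T_x\mathrm{Gr}(2,5))$, so the lines of $V_5=\mathrm{Gr}(2,5)\cap\mathbb{P}^6$ through $x$ correspond to the intersection of this threefold with the linear $\mathbb{P}^2=\mathbb{P}(T_xV_5)$, nonempty since $3+2\geq 5$. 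Both arguments yield a line through \emph{every} point, which is what the later lemmas need. What the paper's approach buys is brevity and uniformity in $d$ (no case analysis, and it also packages the structure of $F_1(X)$ used elsewhere in the same section); what yours buys is elementariness and transparency -- it avoids the Hilbert-scheme machinery of \cite{KPrSh18} entirely, at the cost of invoking the description of lines through a point of the Grassmannian (the VMRT being the Segre variety) and of treating the non-complete-intersection case $V_5$ separately. One small point worth making explicit if you write this up: $T_xV_5=T_x\mathrm{Gr}(2,5)\cap T_x\mathbb{P}^6$ holds because $V_5$ is the scheme-theoretic intersection and is smooth of dimension $3$, so the projectivized tangent space really is a linear $\mathbb{P}^2$ inside $\mathbb{P}^5$, as your dimension count requires.
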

\begin{proof}
By \cite[Proposition 2.2.8]{KPrSh18} the Hilbert scheme of lines $F_1(X)$ on $X$ is a smooth surface. Let $\mathcal{L}(X)\to F_1(X)$ be the universal family of lines on $X$. By \cite[Proposition 2.2.6]{KPrSh18}, the natural map $ev_{\mathcal{L}(X)}\colon \mathcal{L}(X)\to X$ is surjective, and the claim follows.
\end{proof}

\begin{lemma}
\label{lem-dP-line2}
Let $V_d$ be a smooth del Pezzo threefold with $d\geq 3$ embedded into projective space via the linear system $|-K_X/2|$. 
Let $(V_d, D=\sum D_i)$ be an lc log CY pair. 
Assume that $D$ is reducible. Then there exists a line $L$ on $V_d$ such that $L\subset D$.
\end{lemma}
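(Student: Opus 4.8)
The plan is to first use the Picard rank one hypothesis to pin down the components of $D$, and then to exhibit the desired line inside one of them via the theory of del Pezzo surfaces.

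First I would record that $\Pic(V_d)=\zz H$ with $-K_{V_d}\sim 2H$, where $H=-K_{V_d}/2$ is the ample generator giving the embedding. Since $(V_d,D)$ is log Calabi--Yau we have $D\sim_\qq -K_{V_d}\sim 2H$, and as $V_d$ is a smooth Fano its Picard group is torsion free, so in fact $D\sim 2H$. Writing the reduced divisor as $D=\sum D_i$ with each prime component $D_i\sim a_iH$ for some integer $a_i\ge 1$, the relation $\sum a_i=2$ together with the assumption that $D$ is reducible forces exactly two components, each of degree one: $D=D_1+D_2$ with $D_1,D_2\in |H|$. Thus both components are hyperplane sections of $V_d$, and it suffices to produce a line contained in $D_1$ (or $D_2$).

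Next I would analyze the surface $S:=D_1$. By adjunction $-K_S=(-K_{V_d}-D_1)|_{D_1}=H|_{D_1}$ is ample and Cartier, so $S$ is a Gorenstein del Pezzo surface of degree $(-K_S)^2=H^3=d\in\{3,4,5\}$, and a curve $L\subset S$ is a line of $V_d$ precisely when $-K_S\cdot L=H\cdot L=1$. Since $(V_d,D_1+D_2)$ is lc, so is $(V_d,D_1)$, which constrains the singularities of $S$; passing to the minimal resolution $\pi\colon \widetilde S\to S$ yields a weak del Pezzo surface of the same degree $d\le 7$. Such a surface always carries a $(-1)$-curve $E$, and by adjunction on $\widetilde S$ one has $-K_{\widetilde S}\cdot E=1$; since $-K_{\widetilde S}=\pi^*(-K_S)$, the projection formula gives that the image $L:=\pi_*E$ satisfies $H\cdot L=1$ and is therefore a line of $V_d$ lying on $S=D_1\subseteq D$, as desired.

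The main obstacle is controlling the singularities of the particular hyperplane section $D_1$: a priori lc-ness only guarantees mild singularities and does not by itself rule out that $D_1$ is non-normal (for instance singular along a curve with transverse nodes), in which case the minimal-resolution argument must instead be run on the normalization and the $(-1)$-curve count re-examined. I expect this normality and singularity bookkeeping to be the delicate point. Should it resist a clean treatment, I would fall back on Lemma~\ref{lem-dP-line}: since the lines of $V_d$ sweep out the whole threefold, the universal family of lines restricts to a family over $D_1$, and a dimension count on the Hilbert scheme of lines—using that every line meets $D_1$ in the single point recorded by $L\cdot D_1=1$ unless it is contained in $D_1$—produces a line lying on $D_1$.
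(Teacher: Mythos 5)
Your reduction to $D=D_1+D_2$ with $D_1,D_2\in|H|$ is exactly the paper's first step and is fine. The gap is in your analysis of $S=D_1$: from lc-ness of $(V_d,D_1)$ you cannot conclude that the minimal resolution of $S$ is a weak del Pezzo surface. That conclusion requires $S$ to have at worst Du~Val (canonical) singularities, whereas adjunction/inversion of adjunction only gives that $D_1$ is (semi-)log canonical. A hyperplane section of $V_3$ that is a cone over a plane elliptic curve is normal and lc but has a simple elliptic singularity; its minimal resolution is not crepant, so $-K_{\widetilde S}\neq\pi^*(-K_S)$ and the $(-1)$-curve computation collapses. You flag the non-normal case as the delicate point, but this normal, lc-but-not-canonical case is equally fatal to the main route and is not addressed. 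Your fallback does not repair this: if no line lies on $D_1$, the incidence variety $\{(L,p): p\in L\cap D_1\}$ is a surface mapping finitely onto $F_1(V_d)$ and onto $D_1$, so a dimension count alone yields no contradiction; ``lines sweep out $V_d$'' is compatible with every line meeting $D_1$ transversally in one point.

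The missing idea, which is the paper's entire treatment of the singular case, is to use the intersection number against the \emph{multiplicity at a singular point}. If some $D_i$ is singular, pick $P\in D_i$ with $\mathrm{mult}_P D_i\geq 2$ and a line $L$ through $P$ (Lemma~\ref{lem-dP-line}). Since $D_i\sim H$, one has $L\cdot D_i=1$; but if $L\not\subset D_i$, then $L\cdot D_i\geq \mathrm{mult}_P D_i\geq 2$, a contradiction, so $L\subset D_i$. This one-line argument handles \emph{every} singular $D_i$ uniformly --- Du~Val, elliptic, or non-normal --- with no resolution or normality bookkeeping, and the remaining case (both $D_i$ smooth) is the classical fact that smooth del Pezzo surfaces of degree $3\leq d\leq 5$ contain lines, which is where your weak del Pezzo $(-1)$-curve argument is valid. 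You had the key numerical fact $L\cdot D_i=1$ in hand; it just needs to be played against $\mathrm{mult}_P D_i$ rather than against a dimension count.
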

\begin{proof}
Since a boundary $D$ is reducible, we have $D=D_1+D_2$. Indeed, note that $D\sim -K_X \sim 2H$ where~$H$ is a hyperplane section, and $\rho(X)=1$. 
If each $D_i$ is smooth, then each $D_i$ is an anti-canonically embedded degree $d$ del Pezzo surface. By assumption, we have $3\leq d\leq 5$. It is classically known that such surfaces contain lines, so we can find $L\subset D_i$. Assume that some $D_i$ is singular, so there exists a point $P\in D_i$ with $\mathrm{mult}_P D_i\geq 2$. Consider a line $L$ on $X$ that passes through $P$, which exists by Lemma \ref{lem-dP-line}. Since $D_i\sim H$, we have $L\cdot D_i=1$. On the other hand, if $L\not\subset D_i$ we have $L\cdot D_i\geq \mathrm{mult}_P D_i\geq 2$, which is a contradiction. So we have $L\subset D$. 
\end{proof}

For an irreducible boundary $D$ in $V_d$, we do not know whether there exists a line $L$ with the property $L\subset D$. Note that the assumption that $D$ is reducible implies $\mathrm{coreg}(V_d, D)\leq 1$.

\begin{lemma}
\label{V_4-model}
Let $(V_4, D)$ be an lc log CY pair. Assume that $D$ is reducible. Then $(V_4, D)$ is crepant equivalent to $(\mathbb{P}^3, D'=\sum D'_i)$. If $\mathrm{coreg}(V_4, D)=0$, then $(V_4, D)$ admits a toric model. Consequently, $V_4$ admits a toric model. 
\end{lemma}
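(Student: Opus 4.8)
The plan is to adapt the argument of Lemma~\ref{lem-quadric-cluster-type}, projecting now from a \emph{line} rather than from a point. Under the embedding given by $|-K_{V_4}/2|$, the del Pezzo threefold $V_4\subset\mathbb{P}^5$ is the complete intersection of two quadrics $Q_1\cap Q_2$. Since $D$ is reducible, Lemma~\ref{lem-dP-line2} provides a line $L\subset V_4$ with $L\subset D$. First I would recall that the linear projection $\pi_L\colon V_4\dashrightarrow\mathbb{P}^3$ from $L$ is birational. Indeed, the $2$-planes $\Pi\subset\mathbb{P}^5$ containing $L$ are parametrized by a $\mathbb{P}^3$, and for each of them $\Pi\cap Q_i=L\cup L_i'$ is a degenerate conic; hence $\Pi\cap V_4=L\cup\{L_1'\cap L_2'\}$ meets $V_4\setminus L$ in the single residual point $L_1'\cap L_2'$. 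Letting $\Pi$ vary recovers $V_4$ point by point, so $\pi_L$ is birational onto $\mathbb{P}^3$.

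Next I would resolve $\pi_L$ by the blow-up $\sigma\colon W=\operatorname{Bl}_L V_4\to V_4$ with exceptional divisor $E$, after which the projection becomes a birational morphism $\tau\colon W\to\mathbb{P}^3$ (if further blow-ups over $L$ are needed, they occur over the boundary and do not affect the computation below). As $(V_4,D)$ is log CY, Definition~\ref{def:crep-bir-isom} yields a crepant transform $D'$ with $(\mathbb{P}^3,D')\simeq_{\rm cbir}(V_4,D)$, and the only thing to verify before invoking Theorem~\ref{thm-projective-space-toric-model} is that $D'$ is reduced. Setting $m=\operatorname{mult}_L D$, the crepant pullback reads $K_W+D_W=\sigma^*(K_{V_4}+D)$ with $D_W=\widetilde D+(m-1)E$. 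Since $D=D_1+D_2$ has integral coefficients we have $m\in\mathbb{Z}$; since $L\subset D$ we have $m\ge 1$; and since $(V_4,D)$ is lc along $L$ the log discrepancy $2-m$ of $E$ is nonnegative, forcing $m\le 2$. Thus the coefficient $m-1$ of $E$ lies in $\{0,1\}$, and as the $\tau$-contracted divisor is $\tau$-exceptional we conclude that $D'=\tau_*\big(\widetilde D+(m-1)E\big)$ is reduced.

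Finally, since coregularity is a crepant birational invariant, $\operatorname{coreg}(\mathbb{P}^3,D')=\operatorname{coreg}(V_4,D)$; if this number is $0$, then Theorem~\ref{thm-projective-space-toric-model} shows that $(\mathbb{P}^3,D')$, and hence $(V_4,D)$, admits a toric model. For the concluding assertion I would exhibit, exactly as in Lemma~\ref{lem-quadric-cluster-type} via \cite[Lemma 5.3]{ALP23}, a reducible boundary $D$ on $V_4$ with $\operatorname{coreg}(V_4,D)=0$, to which the above applies. The step I expect to demand the most care is the geometric one: confirming that blowing up $L$ resolves $\pi_L$ into a crepant birational morphism and that no divisor other than the images of $\widetilde D$ and $E$ enters $D'$. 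Once the birationality of the projection and the bound $m\le 2$ are in hand, the reducedness of $D'$ and the appeals to Ducat's theorem and to \cite[Lemma 5.3]{ALP23} become formal.
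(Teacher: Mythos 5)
Your proposal is correct and follows essentially the same route as the paper: find a line $L\subset D$ via Lemma~\ref{lem-dP-line2}, project from $L$ to $\mathbb{P}^3$ (resolved by the single blow-up of $L$, whose residual morphism contracts a ruled surface onto a genus-$2$ curve), observe that $L\subset D$ forces the crepant transform $D'$ to be a reduced boundary, and conclude by Theorem~\ref{thm-Ducat}; your explicit check of birationality via planes through $L$ and the bound $\operatorname{mult}_L D\le 2$ from lc-ness merely spell out details the paper leaves implicit. The only correction is bibliographic: the reducible coregularity-zero boundary on $V_4$ needed for the final assertion is constructed in \cite[Lemma 5.4]{ALP23}, not \cite[Lemma 5.3]{ALP23} (the latter is the analogous statement for the quadric).
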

\begin{proof}
It is known that $X$ can be embedded in $\mathbb{P}^5$ where it is realized as the intersection of two quadric hypersurfaces. 
By Lemma \ref{lem-dP-line2}, there exists a line $L$ on $V_4$ such that $L\subset D$. 
Consider the projection of $V_4$ from $L$ to $\mathbb{P}^3$.  
We obtain the following commutative diagram:
\[
\begin{tikzcd}
Y \ar[rrd, "\psi"] \ar[d, "\phi"] &  & \ \\
V_4 \subset
\mathbb{P}^{5} \ar[rr, dashed, "\alpha"] & & \mathbb{P}^3
\end{tikzcd}
\]
where $\phi$ is the blow-up of $L$ and $\psi$ is a contraction of a ruled surface to a curve of genus $2$ in $\mathbb{P}^3$. 
Since $L\subset D$, we see that for the log pullback $(Y, D_Y)$ of $(V_4, D)$, the divisor $D_Y$ is a boundary. Hence in the pushdown $(\mathbb{P}^3, D')$, $D'$ is boundary as well. 
In particular, if $\mathrm{coreg}(V_4, D)=0$, then $\mathrm{coreg}(\mathbb{P}^3, D')=0$. By Theorem \ref{thm-Ducat} we have that $(\mathbb{P}^3, D')$ admits a toric model, hence $(V_4, D)$ admits a toric model as well. 
Further, there exists a boundary $D=D_1+D_2$ on $V_4$ with $\mathrm{coreg}(V_4, D)=0$  
as constructed in ~\cite[Lemma 5.4]{ALP23}. This concludes the proof.
\end{proof}

\begin{lemma}
\label{V_5-model}
Let $(V_5, D)$ be an lc log CY pair. Assume that $D$ is reducible. Then $(V_5, D)$ is crepant equivalent to $(\mathbb{P}^3, D'=\sum D'_i)$. If $\mathrm{coreg}(V_5, D)=0$, then $(V_5, D)$ admits a toric model. Consequently, $V_5$ admits a toric model. 
\end{lemma}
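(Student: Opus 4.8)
The plan is to run the argument of Lemma~\ref{V_4-model} almost verbatim, replacing the intermediate target $\mathbb{P}^3$ by a smooth quadric threefold and invoking Lemma~\ref{lem-quadric-cluster-type} for the final descent to $\mathbb{P}^3$. Recall that $V_5$ embeds in $\mathbb{P}^6$ via $|{-}K_{V_5}/2|=|H|$ as a codimension-three linear section of $\mathrm{Gr}(2,5)$. Since $D$ is reducible, Lemma~\ref{lem-dP-line2} furnishes a line $L\subset V_5$ with $L\subset D$. I would then project $V_5$ from $L$, obtaining a birational map $\alpha\colon V_5\dashrightarrow Q\subset\mathbb{P}^4$ resolved by the diagram
\[
\begin{tikzcd}
Y \ar[rrd, "\psi"] \ar[d, "\phi"] & & \ \\
V_5\subset\mathbb{P}^6 \ar[rr, dashed, "\alpha"] & & Q\subset\mathbb{P}^4
\end{tikzcd}
\]
where $\phi$ is the blow-up of $L$ with exceptional divisor $E$ and $\psi$ is the morphism given by the strict transform of the system of hyperplanes through $L$, i.e.\ by $M:=\phi^*H-E$.

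First I would identify the image. Using the standard intersection formulas on the blow-up of the line $L$ (here $g(L)=0$, $H\cdot L=1$, and $\deg N_{L/V_5}=0$, so that $(\phi^*H)^2\cdot E=0$, $(\phi^*H)\cdot E^2=-1$, and $E^3=0$), one computes $M^3=(\phi^*H-E)^3=H^3+3(\phi^*H)\cdot E^2-E^3=5+3(-1)-0=2$. Since $\psi$ is birational onto its image, the image is a quadric threefold $Q\subset\mathbb{P}^4$; as $\rho(Y)=\rho(V_5)+1=2$ while $\rho(Q)=1$, the map $\psi$ is a divisorial contraction collapsing a single ruled surface onto a curve in $Q$. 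For a general choice of $L$ the quadric $Q$ is smooth.

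The crepant bookkeeping is then identical to the $V_4$ case. Because $L$ lies in the support of $D$ with coefficient at least one, the log discrepancy satisfies $a(E;V_5,D)=2-\mathrm{mult}_L D\le 1$, so in the log pullback $(Y,D_Y)$ with $K_Y+D_Y=\phi^*(K_{V_5}+D)$ the coefficient $1-a(E;V_5,D)$ of $E$ is nonnegative; hence $D_Y\ge 0$, and its pushforward $D'=\psi_*D_Y$ is again a boundary. As crepant birational equivalence preserves the dual complex up to PL-homeomorphism, $\mathrm{coreg}(Q,D')=\mathrm{coreg}(V_5,D)\le 1$ (the inequality being the one noted after Lemma~\ref{lem-dP-line2}). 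Applying Lemma~\ref{lem-quadric-cluster-type} to $(Q,D')$ then produces a crepant model $(\mathbb{P}^3,\sum D'_i)$, proving the crepant equivalence $(V_5,D)\simeq_{\rm cbir}(\mathbb{P}^3,\sum D'_i)$; and when $\mathrm{coreg}(V_5,D)=0$ the same lemma yields a toric model for $(V_5,D)$. For the closing assertion I would invoke the reducible coregularity-zero boundary $D=D_1+D_2$ on $V_5$ constructed in \cite[Lemma 5.5]{ALP23} (paralleling \cite[Lemma 5.4]{ALP23} used for $V_4$), giving a toric model for $V_5$ itself.

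The step I expect to be the main obstacle is the geometric input of the second paragraph: verifying that the single projection from $L$ is genuinely \emph{birational} onto a quadric (and not, say, a degree-two cover of $\mathbb{P}^3$, which the computation $M^3=2$ alone does not rule out) and, more delicately, that $Q$ is \emph{smooth}, since Lemma~\ref{lem-quadric-cluster-type} is stated only for smooth quadrics. The line provided by Lemma~\ref{lem-dP-line2} need not be general, and a special line (one with normal bundle $\mathcal{O}(1)\oplus\mathcal{O}(-1)$) could a priori yield a singular quadric; dispatching that case—either by arranging $L\subset D$ to be general, or by extending Lemma~\ref{lem-quadric-cluster-type} to the relevant singular quadric—is the delicate point. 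The remaining ingredients (the discrepancy inequality, effectivity of $D'$, and invariance of coregularity) are formal and carry over exactly as for $V_4$.
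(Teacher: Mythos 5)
Your proposal is correct and follows essentially the same route as the paper: project $V_5$ from a line $L\subset D$ supplied by Lemma~\ref{lem-dP-line2} to land on a quadric $Q\subset\mathbb{P}^4$, transport the boundary crepantly (using $L\subset D$ to keep it effective), and finish by applying Lemma~\ref{lem-quadric-cluster-type} together with the reducible coregularity-zero boundary from \cite{ALP23}. The ``delicate points'' you flag (birationality of the projection and smoothness of $Q$) are simply asserted in the paper as classical facts about $V_5$ --- it states that $\psi$ contracts a ruled surface onto a rational curve in a smooth quadric --- so your write-up is, if anything, more careful on exactly the steps the paper leaves implicit.
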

\begin{proof}
It is known that $X$ can be realized as the intersection of the Grassmannian $\mathrm{Gr}(2,5)$ in $\mathbb{P}^9$ with a subspace $\mathbb{P}^6$. So we have $X\subset\mathbb{P}^6$. 
By Lemma \ref{lem-dP-line2}, there exists a line $L$ on $V_5$ such that $L\subset D$. 
Consider the projection of $V_5$ from $L$ to $\mathbb{P}^4$.  
We obtain the following commutative diagram:
\[
\begin{tikzcd}
Y \ar[rrd, "\psi"] \ar[d, "\phi"] &  & \ \\
V_5 \subset
\mathbb{P}^{6} \ar[rr, dashed, "\alpha"] & & Q\subset\mathbb{P}^4
\end{tikzcd}
\]
where $\phi$ is the blow-up of $L$ and $\psi$ is a contraction of a ruled surface to a rational curve in $Q$. Since $L\subset D$, we see that for the log pullback $(Y, D_Y)$ of $(V_5, D)$, the divisor $D_Y$ is a boundary. Hence in the pushdown $(\mathbb{P}^3, D')$, $D'$ is boundary as well. 
In particular, if $\mathrm{coreg}(V_5, D)=0$, then $\mathrm{coreg}(\mathbb{P}^3, D')=0$. By Theorem \ref{thm-Ducat} we have that $(\mathbb{P}^3, D')$ admits a toric model, hence $(V_5, D)$ admits a toric model as well. 
Further, there exists a boundary $D=D_1+D_2$ on $V_5$ with $\mathrm{coreg}(V_5, D)=0$  
as constructed in ~\cite[Lemma 5.4]{ALP23}. This concludes the proof.
\end{proof}

\section{Fano threefolds of main series}
\label{sec-main-series}
In this section, we work in the following setting. Let $X$ be a smooth Fano threefold with $\rho(X)=1$ and $i(X)=1$. In this case, $X$ is called a Fano threefold of the main series. In what follows, by the \emph{genus} of a smooth Fano threefold $X$ of the main series, we mean the number:
\[
g(X) = h^0(X, \oo(-K_X))-2.
\] 
It is known that $2\leq g\leq 12$, $g\neq 11$. In what follows, we denote by $X_d$ a Fano threefold of the main series with $(-K_X)^3=d$. It is known that $d=2g(X)-2$.

\subsection{Lines and conics}
By a {\em line} on a Fano threefold $X$ of the main series we mean a smooth rational curve $L$ such that $L\cdot (-K_X)=1$. By \cite[Proposition 1 and Corollary 1]{Isk89}, we have either 
\[
N_{L/X}=\oo\oplus\oo(-1) \quad \quad \text{or} \quad \quad N_{L/X}=\oo(-1)\oplus\oo(2).
\] 
If for all lines on $X$ the second possibility is realized, then $X$ is called \emph{exotic}. It is known that for $g(X)\geq 9$ there exists a unique exotic Fano threefold $X$, namely, the Mukai-Umemura example \cite{MU82}. 
It is also known that $F_1(X)$ is one-dimensional projective variety, and smooth points of $F_1(X)$ correspond to a lines whose normal bundle is $\oo\oplus\oo(-1)$, see \cite[Corollary 2.1.6]{KPrSh18}. For a general $X$ we have that $F_1(X)$ is a smooth curve, so all lines have normal bundle $\oo\oplus\oo(-1)$ \cite[Theorem 4.2.7]{IP99}. Also, through any point in $X$ passes at most finitely many lines. 
It follows that for a general smooth Fano threefold of the main series, we always can find a line with normal bundle of the form $\oo\oplus\oo(-1)$. 

By a {\em conic} on a Fano threefold $X$ of the main series we mean a smooth rational curve $L$ such that $L\cdot (-K_X)=2$. Denote by $F_2(X)$ the Hilbert schemes on conics on $X$. 
It is also known that if $g(X)\geq 7$, then  $F_2(X)$ is a  smooth irreducible surface \cite[Proposition 2.3.6]{KPrSh18}. Moreover, a general conic on $X$ intersects only finitely many lines on $X$. 

\begin{lemma}
\label{lem-x16-toric-model}
A general Fano variety $X_{16}$ admits a toric model. 
\end{lemma}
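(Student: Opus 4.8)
The plan is to follow the same projection strategy used for the del Pezzo threefolds and the quadric, but now executed inside the appropriate ambient projective space for $X_{16}$. Recall that $X_{16}$ is the smooth Fano threefold of the main series with $(-K_X)^3 = 16$, hence genus $g = 9$; its anticanonical embedding realizes it in $\pp^{10}$. The key input I would invoke is the coregularity-zero complement constructed in~\cite{ALP23}: a general $X_{16}$ carries a $1$-complement $(X_{16}, D)$ with $\mathrm{coreg}(X_{16}, D) = 0$, and in particular $D$ is a reducible anticanonical divisor. My goal is to produce a crepant birational map to $(\pp^3, D')$ and then quote Theorem~\ref{thm-Ducat} to conclude.

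First I would fix such a $D = \sum D_i$ with $\mathrm{coreg} = 0$ and locate a center to project from. Since $g(X_{16}) = 9 \geq 7$, the discussion of lines and conics above applies: for a general $X_{16}$ the Hilbert scheme $F_1(X)$ is a smooth curve, so there exist lines with normal bundle $\oo \oplus \oo(-1)$, and since $\mathrm{coreg}(X_{16},D) \le 1$ the boundary $D$ is singular by Lemma~\ref{lem-singular-boundary}. I would look for a line $L \subset X_{16}$ that is contained in $\mathrm{Supp}(D)$ (equivalently, that is a log canonical center of the pair), arguing as in the proof of Lemma~\ref{lem-dP-line2}: a component $D_i$ either is singular, forcing a line through its singular point into $D_i$ by the intersection-number argument, or the components are smooth surfaces known classically to contain lines. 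With such a line in hand, I would blow up $L$ to get $\phi\colon Y \to X_{16}$ and then perform the birational contraction $\psi\colon Y \to \pp^3$ realizing the projection from $L$, exactly mirroring the diagrams in Lemmas~\ref{V_4-model} and~\ref{V_5-model}.

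The crepant bookkeeping is then routine: because $L \subset D$, the log pullback $(Y, D_Y)$ has $D_Y$ effective (a genuine boundary), hence the pushforward $(\pp^3, D')$ has $D'$ effective as well, and crepancy forces $\mathrm{coreg}(\pp^3, D') = \mathrm{coreg}(X_{16}, D) = 0$. Applying Theorem~\ref{thm-Ducat} to $(\pp^3, D')$ yields a toric model for $(\pp^3, D')$, and transporting it back along the crepant birational equivalence gives a toric model for $(X_{16}, D)$, which is what we want.

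The main obstacle I anticipate is verifying that the projection from $L$ actually lands in $\pp^3$ with the target geometry being a contraction as in the del Pezzo cases, rather than some intermediate Fano requiring a further step — this is precisely why the summary diagram routes $X_{16}$ through an intermediate arrow before reaching $\pp^3$. Concretely, projecting $X_{16} \subset \pp^{10}$ from a line need not immediately produce $\pp^3$; one likely needs to identify the correct center (possibly a conic, using that a general conic meets only finitely many lines, or a suitable higher-dimensional linear center) and control the resulting birational model, checking that the exceptional behavior of $\psi$ contracts a ruled surface and that no component of $D'$ acquires a negative coefficient. Pinning down this intermediate variety and confirming the claimed contraction to $\pp^3$ is the delicate part; the crepancy and coregularity transfer, by contrast, are formal once the geometry of the projection is established.
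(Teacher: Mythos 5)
Your overall scheme (a line inside the boundary, a projection-type map to $\pp^3$, crepant transfer of the complement, then Theorem~\ref{thm-Ducat}) is the same as the paper's, but two of your steps have genuine gaps, and both are exactly where the paper does something different.

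First, the existence of a line $L\subset\Supp(D)$ cannot be obtained by mimicking Lemma~\ref{lem-dP-line2}. For a Fano threefold of the main series one has $\Pic(X)=\zz\cdot(-K_X)$, so a $1$-complement $D\sim -K_X$ is reduced and \emph{irreducible}; your parenthetical claim that $D$ is reducible is false, and the paper explicitly remarks after Lemma~\ref{lem-dP-line2} that for an irreducible boundary it does not know how to produce a line inside it. Moreover, the intersection-number trick needs a line through the singular point of $D$: for del Pezzo threefolds this is supplied by Lemma~\ref{lem-dP-line}, because there $F_1(X)$ is a surface and lines pass through every point, but for the main series $F_1(X)$ is a curve, the lines sweep out only a surface in $X$, and through a general point of $X$ there passes no line at all. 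The paper sidesteps this entirely by quoting \cite[Lemma 6.4]{ALP23}, where the coregularity-zero boundary on $X_{16}$ is \emph{constructed from the start} so as to contain a line; this input is missing from your argument and cannot be recovered by the $V_d$-style reasoning.

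Second, the birational map to $\pp^3$: as you yourself suspect in your final paragraph, the single projection from a line on $X_{16}\subset\pp^{10}$ does not produce $\pp^3$, and your proposal leaves this unresolved. The paper's resolution is Iskovskikh's \emph{double} projection from the line, i.e.\ the map $\beta$ given by the linear system $|H-2L|$, together with the structure theorem \cite[Theorem 4.3.7(iii)]{IP99}: blowing up $L$, performing a composition of flops, one reaches $\mathrm{Bl}_\Gamma\pp^3$ for a smooth curve $\Gamma\subset\pp^3$ of degree $7$ and genus $3$, which then blows down to $\pp^3$. Crepancy through this chain is unproblematic (flops are crepant, and $L\subset D$ keeps the transform effective), so once these two inputs are supplied the rest of your bookkeeping coincides with the paper's proof; without them, the proposal is a plan rather than a proof.
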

\begin{proof}
Let $X$ be a general Fano threefold in the family \textnumero\,1.8. Then $g(X)=9$ and $X\subset \mathbb{P}^{10}$. We may assume that it is not exotic. By
\cite[Theorem 4.3.7(iii)]{IP99} the variety $X$ fits in the following commutative diagram:
\begin{equation}
\begin{tikzcd}
Y \ar[rr, dashed, "\alpha"] \ar[d, swap, "\phi"] &  & Y' = \mathrm{Bl}_\Gamma\mathbb{P}^3 \ar[d, "\psi"]  \\
X = X_{16} \subset
\mathbb{P}^{10} \ar[rr, dashed, "\beta"] & & \mathbb{P}^3 
\end{tikzcd}
\end{equation}
where 
\begin{itemize}
\item
$\phi$ is the blow-up of a line on $X$, 
\item
$\alpha$ is a composition of flops, 
\item
a birational map $\beta$ is given by the linear system $|H-2L|$ where $H$ is a hyperplane section, 
\item 
$\psi$ is the blow-up of a smooth non-hyperelliptic curve $\Gamma\subset\mathbb{P}^3$ of degree $7$ and genus $3$. 
\end{itemize}
By \cite[Lemma 6.4]{ALP23} there exists a reduced irreducible boundary $D$ on $X_{16}$ with $L\subset D$ and $\mathrm{coreg}(X_{16}, D)=0$. Thus, its crepant transform $D'$ on $\mathbb{P}^3$ is also a boundary. By Theorem~\ref{thm-Ducat}, we have that $(\mathbb{P}^3, D')$ admits a toric model, hence $(X_{16}, D)$ admits a toric model as well.
\end{proof}

\begin{lemma}
\label{lem-x18-toric-model}
A general Fano variety $X_{18}$ admits a toric model. 
\end{lemma}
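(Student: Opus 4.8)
The plan is to run the strategy of Lemma~\ref{lem-x16-toric-model}, but reducing $X_{18}$ to the smooth quadric threefold $Q\subset\mathbb{P}^4$ and then invoking Lemma~\ref{lem-quadric-cluster-type} (the quadric case). Let $X$ be a general member of family \textnumero\,1.9, so that $g(X)=10$ and $X\subset\mathbb{P}^{11}$ under the anticanonical embedding. For general $X$ the Hilbert scheme $F_1(X)$ is a smooth curve, so every line on $X$ has normal bundle $\oo\oplus\oo(-1)$; fix such a line $L\subset X$.

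Next I would invoke the birational link given by the classical double projection from $L$, analysed in \cite[\S4.3]{IP99}. Mirroring the genus-$9$ diagram, let $\phi\colon Y\to X$ be the blow-up of $L$, let $\alpha\colon Y\dashrightarrow Y'$ be a composition of flops, and let $\psi\colon Y'=\mathrm{Bl}_\Gamma Q\to Q$ contract the exceptional divisor over a curve $\Gamma\subset Q$, so that the induced map $\beta\colon X_{18}\dashrightarrow Q$ is given by the linear system $|H-2L|$ and is birational. The target is a quadric in $\mathbb{P}^4$ because $\dim|H-2L|=4$ for $X_{18}$, in agreement with the genus-$12$ and genus-$9$ entries of the diagram in the introduction.

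Then, following the construction of \cite{ALP23}, I would produce a reduced irreducible boundary $D$ on $X_{18}$ with $L\subset D$ and $\mathrm{coreg}(X_{18},D)=0$. Since $L\subset D$, the log pullback $(Y,D_Y)$ of $(X_{18},D)$ has effective boundary, hence the crepant transform $D'$ of $D$ on $Q$ is again a boundary with $\mathrm{coreg}(Q,D')=0$. By Lemma~\ref{lem-quadric-cluster-type}, the pair $(Q,D')$ admits a toric model, and therefore so does the crepant birationally equivalent pair $(X_{18},D)$.

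The main obstacle is pinning down the elementary link of \cite{IP99} realizing the double projection $X_{18}\dashrightarrow Q$ — identifying the flopped curves and the image curve $\Gamma\subset Q$ — and verifying that the coregularity-zero boundary from \cite{ALP23} remains effective across the flops, so that $D'\ge 0$ on $Q$. Once these are established, the conclusion follows formally from the quadric case, exactly as in the genus-$9$ argument.
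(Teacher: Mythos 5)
Your proposal is correct and is essentially the paper's own proof: the paper invokes the double-projection diagram of \cite[Theorem 4.3.7(ii)]{IP99} (blow-up of a line, flops, blow-down to a smooth quadric $Q\subset\mathbb{P}^4$ along a curve $\Gamma$ of degree $7$ and genus $2$), takes the reduced irreducible boundary $D$ with $L\subset D$ and $\mathrm{coreg}(X_{18},D)=0$ from \cite[Lemma 6.5]{ALP23}, and concludes via Lemma~\ref{lem-quadric-cluster-type}. The two issues you flag as remaining obstacles — pinning down the link and the effectivity of the crepant transform — are precisely what those two citations supply, so the argument closes exactly as you outline.
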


\begin{proof}
Let $X$ be a general Fano threefold in the family \textnumero\,1.9. Then $g(X)=10$ and $X\subset \mathbb{P}^{11}$. By
\cite[Theorem 4.3.7(ii)]{IP99} the variety $X$ fits in the following commutative diagram 
\begin{equation}
\begin{tikzcd}
Y \ar[rr, dashed, "\alpha"] \ar[d, swap, "\phi"] &  & Y' = \mathrm{Bl}_\Gamma Q \ar[d, "\psi"]  \\
X = X_{18} \subset
\mathbb{P}^{11} \ar[rr, dashed, "\beta"] & & Q\subset \mathbb{P}^4
\end{tikzcd}
\end{equation}
where 
\begin{itemize}
\item
$\phi$ is the blow-up of a line on $X$, 
\item
$\alpha$ is a composition of flops, 
\item
a birational map $\beta$ is given by the linear system $|H-2L|$ where $H$ is a hyperplane section, 
\item 
$\psi$ is the blow-up of a smooth curve $\Gamma\subset Q$ of degree $7$ and genus $2$ on  a smooth quadric $Q\subset\mathbb{P}^4$. 
\end{itemize}
By \cite[Lemma 6.5]{ALP23} there exists a reduced irreducibly boundary $D$ on $X_{18}$ with $L\subset D$ and $\mathrm{coreg}(X_{18}, D)=0$. Thus, its crepant transform $D'$ on $Q$ is also a boundary. By Lemma \ref{lem-quadric-cluster-type} we have that $(Q, D')$ admits a toric model, hence $(X_{18}, D)$ admits a toric model as well.
\end{proof}

\begin{lemma}
\label{lem-x22-toric-model}
A general Fano variety $X_{22}$ admits a toric model. 
\end{lemma}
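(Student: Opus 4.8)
The strategy is to mirror the treatment of $X_{16}$ and $X_{18}$: I would realize a general $X_{22}$ as crepant birational to the del Pezzo threefold $V_5$ by a double projection from a line, and then invoke Lemma~\ref{V_5-model}. Let $X$ be a general member of the family \textnumero\,1.10, so that $g(X)=12$ and $X\subset\mathbb{P}^{13}$ is anticanonically embedded; in particular $X$ is not exotic, so we may choose a line $L\subset X$ with normal bundle $\oo\oplus\oo(-1)$. The first step is to record the structure theorem for the double projection from $L$, namely \cite[Theorem 4.3.7(i)]{IP99}, which places $X$ in a commutative diagram
\[
\begin{tikzcd}
Y \ar[rr, dashed, "\alpha"] \ar[d, swap, "\phi"] & & Y' \ar[d, "\psi"] \\
X = X_{22} \subset \mathbb{P}^{13} \ar[rr, dashed, "\beta"] & & V_5 \subset \mathbb{P}^6
\end{tikzcd}
\]
where $\phi$ is the blow-up of $L$, $\alpha$ is a composition of flops, $\beta$ is given by the linear system $|H-2L|$ with $H$ a hyperplane section, and $\psi$ is the blow-up of a curve $\Gamma\subset V_5$.

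Next I would transport a suitable $1$-complement across this diagram. Following the analogue for the family \textnumero\,1.10 of \cite[Lemmas 6.4 and 6.5]{ALP23} (i.e.\ \cite[Lemma 6.6]{ALP23}), there is a $1$-complement $D$ on $X_{22}$ with $L\subset D$ and $\mathrm{coreg}(X_{22},D)=0$. Because $L\subset D$, the log pullback $(Y,D_Y)$ along $\phi$ has effective $D_Y$; since $\alpha$ is an isomorphism in codimension one and $\psi$ is a divisorial contraction, the crepant transform $D'$ of $D$ on $V_5$ is again an effective boundary with $\mathrm{coreg}(V_5,D')=\mathrm{coreg}(X_{22},D)=0$. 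To apply Lemma~\ref{V_5-model} I must moreover know that $D'$ is reducible. Since the link is of type II, the exceptional divisor $E$ of $\phi$ is not contracted by $\psi$, so its image on $V_5$ is a surface distinct from the strict transform of $D$; thus as soon as $E$ occurs in $D_Y$ (equivalently $\mathrm{mult}_L D=2$) we obtain a second component of $D'$. Alternatively, one arranges the boundary $D$ from \cite{ALP23} to be reducible from the outset, in which case reducibility of $D'$ is immediate.

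With $D'$ a reducible boundary of coregularity $0$ on $V_5$, Lemma~\ref{V_5-model} yields a toric model of $(V_5,D')$, whence $(X_{22},D)\simeq_{\rm cbir}(V_5,D')$ admits a toric model and therefore $X_{22}$ admits a toric model. The main obstacle I expect lies in the middle paragraph: verifying that the double projection from a general line on $X_{22}$ really lands in $V_5$ (the precise content of \cite[Theorem 4.3.7(i)]{IP99}) and, above all, that the crepant transform $D'$ is simultaneously effective and reducible. Effectivity forces $L$ to lie in $D$ with a controlled multiplicity so that no boundary coefficient becomes negative under the flops, while reducibility requires a careful analysis of which divisors $\psi$ does and does not contract. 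This bookkeeping through the link, rather than any new idea, is the crux of the argument.
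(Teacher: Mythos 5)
You follow exactly the paper's route: the double-projection link of \cite[Theorem 4.3.7(i)]{IP99} from a line $L\subset X_{22}$ to $V_5$, transport of the coregularity-zero $1$-complement of \cite[Lemma 6.6]{ALP23}, and reduction to Lemma~\ref{V_5-model}. The gap is at the one point you yourself flag as the crux: reducibility of the crepant transform $D'$ on $V_5$. Your fallback option --- ``arrange the boundary $D$ from \cite{ALP23} to be reducible from the outset'' --- is impossible: $X_{22}$ has Picard rank one and index one, so $\operatorname{Pic}(X_{22})=\mathbb{Z}\cdot(-K_{X_{22}})$, and a $1$-complement is a reduced divisor linearly equivalent to the primitive ample generator, hence necessarily irreducible. (This is exactly why the paper can ask for reducible boundaries on the index-two threefolds $V_4$ and $V_5$, where $-K\sim 2H$, but never on a main-series Fano.) So your argument rests entirely on the first branch, $\operatorname{mult}_L D=2$, which you state only conditionally and never verify for the boundary produced by \cite[Lemma 6.6]{ALP23}.

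That verification is precisely what the paper's proof supplies, and it is not an incidental detail: by the construction in \cite[Lemma 6.6]{ALP23}, the crepant transform of $D$ on $V_5$ is $D'=D'_1+D'_2$ with both $D'_i$ hyperplane sections; equivalently, $D$ is the birational transform of a general hyperplane section of $V_5$, is singular with multiplicity exactly two along $L$, and so the $\phi$-exceptional divisor enters $D_Y$ with coefficient one and pushes forward to a hyperplane section of $V_5$. Your numerical mechanism for producing the second component is the correct one --- and since $(X_{22},D)$ is lc one indeed has $\operatorname{mult}_L D\leq 2$, so ``$E$ occurs in $D_Y$'' is equivalent to multiplicity exactly two --- but as written your proof is conditional on a multiplicity statement you do not establish, and the alternative you offer in order to bypass it cannot occur. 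To close the gap one must extract from the construction in \cite[Lemma 6.6]{ALP23} (as the paper does) that the boundary there is singular along the line, rather than merely containing it.
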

\begin{proof}
Let $X$ be a Fano threefold in the family \textnumero\,1.10 different from the Mukai-Umemura example, so it is not exotic. Then $g(X)=12$ and $X\subset \mathbb{P}^{13}$. By
\cite[Theorem 4.3.7(i)]{IP99} the variety $X$ fits in the following commutative diagram 
\begin{equation}
\begin{tikzcd}
Y \ar[rr, dashed, "\alpha"] \ar[d, swap, "\phi"] &  & Y' = \mathrm{Bl}_\Gamma V_5 \ar[d, "\psi"]  \\
X = X_{22} \subset
\mathbb{P}^{13} \ar[rr, dashed, "\beta"] & & V_5\subset \mathbb{P}^6
\end{tikzcd}
\end{equation}
where 
\begin{itemize}
\item
$\phi$ is the blow-up of a line on $X$, 
\item
$\alpha$ is a composition of flops, 
\item
a birational map $\beta$ is given by the linear system $|H-2L|$ where $H$ is a hyperplane section, 
\item 
$\psi$ is blow-up of a smooth rational curve $\Gamma\subset V_5$ of degree $5$ where $V_5$ is a del Pezzo threefold of degree $5$. 
\end{itemize}
By \cite[Lemma 6.6]{ALP23} there exists a reduced irreducible boundary $D$ on $X_{22}$ with $L\subset D$ and $\mathrm{coreg}(X_{22}, D)=0$. Thus, its crepant transform $D'$ on $V_5$ is also a boundary. Moreover, $D'$ has the form $D'=D'_1+D'_2$ where $D'_1$ and $D'_2$ are hyperplane sections, so $D'$ is reducible. 
By Lemma \ref{V_5-model} it follows that $(V_5, D')$ admits a toric model, hence $(X_{22}, D)$ admits a toric model as well.
\end{proof}

Now we treat the case of Fano threefolds in the family \textnumero\,1.6. 

\begin{lemma}
\label{lem-1.6-to-222}
For a general $X_{12}$, there exists a boundary $D$ on $X$ such that $D$ contains a smooth conic $C$ on $X$, and such that $\mathrm{coreg}(X, D)=0$.
\end{lemma}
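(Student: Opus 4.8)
The plan is to build a reduced anticanonical surface $D\in|-K_X|$ that passes through a smooth conic and is degenerate enough to carry a zero-dimensional log canonical center, forcing $\mathrm{coreg}(X,D)=0$. Since $X=X_{12}$ lies in the family \textnumero\,1.6, its genus is $g(X)=7$, so by the results recalled above $F_2(X)$ is a smooth irreducible surface; I fix a general smooth conic $C\subset X$ and write $\langle C\rangle\cong\pp^2$ for the plane it spans in the anticanonical embedding $X\subset\pp^{8}$. As $i(X)=\rho(X)=1$, every member of $|-K_X|=|H|$ is an irreducible hyperplane section, so whatever $D$ I produce is automatically reduced, and the only freedom is the choice of hyperplane.

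The family to work in is the system of hyperplanes containing $\langle C\rangle$, a $\pp^{5}\subset(\pp^{8})^{*}$; each corresponding section $D_H=X\cap H$ then contains $C$. It is essential to start from the conic rather than from a known complement: a general anticanonical surface has Picard number one and so contains no conic at all, and it is precisely the class of $C$ that makes a suitable surface available. Inside this $\pp^5$ I would degenerate $H$ until the worst singularity of $D_H$ is a degenerate cusp whose minimal log canonical center is a single point $p$ (which may be taken on $C$) — equivalently, a log canonical singularity whose embedded resolution over $p$ produces a two-dimensional dual complex.

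To certify the coregularity I would read off $\mathcal D(X,D_H)$ from such an embedded log resolution. The defining feature of a degenerate cusp is that, after finitely many blow-ups over $p$, the reduced coefficient-one part of the log pullback acquires three divisors through a common point; this triple point is a $2$-simplex of $\mathcal D(X,D_H)$, so $\dim\mathcal D(X,D_H)=2$ and hence $\mathrm{coreg}(X,D_H)=0$, while the local model keeps $(X,D_H)$ log canonical. Setting $D:=D_H$ then yields the required $1$-complement with $C\subset D$, in direct analogy with the line-containing complements built for the remaining main-series threefolds in \cite{ALP23}.

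The crux is the existence step in the second paragraph: one must guarantee a hyperplane through $\langle C\rangle$ whose section is singular \emph{exactly} to the extent of a degenerate cusp — bad enough to produce a zero-dimensional log canonical center, yet still log canonical, and without $D_H$ becoming reducible or non-reduced. Establishing this calls either for a dimension count among the anticanonical sections that are suitably tangent along $C$, together with the generality of $X_{12}$ in its deformation family, or for an explicit model of the family \textnumero\,1.6 in which the degenerate member can be exhibited and the inclusion $C\subset D$ checked by hand.
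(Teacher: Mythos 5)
Your proposal has a genuine gap, and it sits exactly where the whole content of the lemma lies: the existence of the special anticanonical section. Your reductions are sound as far as they go --- since $\rho(X)=i(X)=1$, every member of $|-K_X|$ is an irreducible hyperplane section, and an lc anticanonical divisor whose singularity forces three coefficient-one divisors through a point on a log resolution does give $\dim\mathcal{D}(X,D)=2$, hence $\mathrm{coreg}(X,D)=0$. But the second paragraph, ``degenerate $H$ until the worst singularity of $D_H$ is a degenerate cusp,'' is precisely the assertion to be proved, and you concede in your final paragraph that it still requires ``a dimension count \dots or an explicit model.'' A dimension count is delicate here for a structural reason: it can produce sections that are \emph{at least} this singular, but the lemma needs a section singular enough to carry a $0$-dimensional log canonical center yet \emph{no worse} than log canonical, and nothing in your argument controls this upper bound on the singularity; this is exactly where the generality of $X_{12}$ must enter, and it cannot be invoked abstractly. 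Note also that your observation about irreducibility cuts against working on $X$ directly: on $X$ the coregularity-zero structure can only come from a rather special (non-normal or cuspidal) singularity of an irreducible surface, which is hard to exhibit without coordinates.

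The paper fills this gap by changing models instead of degenerating hyperplanes. Via Iskovskikh's double projection from a line $L\subset X$ (blow-up $\phi\colon Y\to X$, then a small contraction $\alpha\colon Y\to V=V_{2\cdot2\cdot2}\subset\pp^6$, the intersection of three quadrics with $8$ nodes), the anticanonical class becomes visibly reducible on $V$: hyperplane sections through the cubic surface $F=\alpha(E)$, $E\simeq\mathbb{F}_1$, split as $F+R$ with $R$ a quintic del Pezzo surface moving in a pencil $\mathcal{R}$. Coregularity zero is then certified by a concrete, checkable condition: choose $R\in\mathcal{R}$ so that $R|_F$ is an irreducible curve with one node $p$ --- a statement about a pencil of curves on $\mathbb{F}_1$, verified by the explicit equations in the proof of \cite[Lemma 6.2]{ALP23} for general $X$ --- after which inversion of adjunction gives lc-ness of $(V,F+R)$ and \cite[Lemma 4.3]{ALP23}, applied near $p$, gives $\dim\mathcal{D}=2$. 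Finally, the conic is not prescribed in advance as in your setup, but found a posteriori: a general member of $\mathcal{R}$ contains lines, lines degenerate only to lines, so $R$ contains a line $L'$; since $L'$ meets $F$ in one point, its strict transform on $X$ is a smooth conic $C\subset D$. In effect, carrying out the ``explicit model'' alternative you mention is what the passage to $V_{2\cdot2\cdot2}$ accomplishes, and without it (or some substitute for it) your argument does not prove the lemma.
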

\begin{proof}
To prove the claim, we recall a classical construction due to Iskovskikh.
Let $X$ be a general Fano threefold in the family \textnumero1.6. Then $g(X)=7$ and $X\subset \mathbb{P}^8$. By \cite[Theorem 6.1(vii)]{Isk78},  
there exists the following commutative diagram 
\begin{equation}
\label{diag-X12}
\begin{tikzcd}
Y \ar[rrd, "\alpha"] \ar[d, swap, "\phi"] &  & \ \\
X = X_{12} \subset
\mathbb{P}^{8} \ar[rr, dashed, "\beta"] & & V_{2\cdot 2\cdot 2} \subset \mathbb{P}^6
\end{tikzcd}
\end{equation}
where 
\begin{itemize}
\item
$\phi$ is the blow-up of a line $L\subset X$, 
\item
$\alpha$ is a small contraction, and it contracts precisely the strict transforms of $8$ lines on $X$ passing through $L$,
\item
a birational map $\beta$ is given by the linear system $|H-L|$, 
\item 
the  morphism $\alpha$ is given by the linear system $|H-E|$ where $E\simeq \mathbb{F}_1$ is the $\phi$-exceptional divisor. 
\end{itemize}
The image of $\alpha$ is the intersection of $3$ quadrics $V=V_{2\cdot 2\cdot 2}$ which has $8$ ordinary double points $p_1,\ldots, p_8$ as singularities. Moreover, $V$ contains the isomorphic image $F=F_3$ of $E$, and $F$ is a surface of degree $3$ in $\mathbb{P}^4\subset \mathbb{P}^6$ that passes through $p_1,\ldots, p_8$. 
Consider a pencil of hyperplane sections $\mathcal{H}$ on $V$ that contain $F$. Elements of $\mathcal{H}$ on $V$ are reducible. Each such element is the union of $F$ and a del Pezzo surface $R$ of degree $5$. Let $\mathcal{R}$ be the pencil of such residual surfaces. One check that $R\in \mathcal {R}$ restricts to $F$ as an element of the linear system $|-K_{F}-\sum p_i|$ on $F\simeq \mathbb{F}_1$. 

Consider the pair $(V, D_V)$ where $D_V = F + R$ and $R\in \mathcal{R}$ is such an element that $R|_F$ is an irreducible curve with one node $p$. Such a curve exists provided that $X$ is general, as shown by the equations in the proof of \cite[Lemma 6.2]{ALP23}. Also, this node is disjoint from $p_1,\ldots p_8$. Note that $R$ may be singular, but it has no worse than lc singularities. Then the pair $(V, D_V)$ is lc by inversion of adjunction on $F$. Let $(Y, D_Y)$ be its strict transform on $Y$. By \cite[Lemma 4.3]{ALP23} applied locally near $p$, we see that $\dim \mathcal{D}(Y, D_Y)=2$ and so  $\mathrm{coreg}(Y, D_Y)=\mathrm{coreg}(X, D)=0$.

We show that the surface $R$ contains a line $L'$. Indeed, a general element in $\mathcal{R}$ is a quintic del Pezzo surface, so it contains a line in $\mathbb{P}^8$. But a line can degenerate only to a line, so there exists a line $L'\subset R$. Then, $L'$ intersects $F$ at one point, so the strict transform of $L'$ on $X$ is a conic $C$. Also, we have $C\subset D$, and the claim follows.
\end{proof}

\begin{lemma}
\label{lem-x12-toric-model}
A general Fano variety $X_{12}$ admits a toric model.
\end{lemma}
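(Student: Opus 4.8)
The plan is to mimic the proofs of Lemmas~\ref{lem-x18-toric-model} and~\ref{lem-x22-toric-model}, replacing the projection from a line by a projection from a conic. First I would invoke Lemma~\ref{lem-1.6-to-222} to fix a boundary $D$ on $X=X_{12}$ with $\mathrm{coreg}(X,D)=0$ that contains a smooth conic $C\subset X$. The conic plays the role that the line played for the higher-genus Fano threefolds: it is the center of the birational modification that transports $(X,D)$ to a pair on a simpler Fano threefold.

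Next I would realize the conic projection as a crepant birational map. Following the classical description of the projection from a conic on a general prime Fano threefold of genus $7$ (cf.\ \cite{Isk78} and \cite[Chapter~4]{IP99}), the variety $X$ fits into a commutative diagram
\[
\begin{tikzcd}
Y \ar[rr, dashed, "\alpha"] \ar[d, swap, "\phi"] & & Y' \ar[d, "\psi"] \\
X=X_{12}\subset\mathbb{P}^{8} \ar[rr, dashed, "\beta"] & & Q\subset\mathbb{P}^{4}
\end{tikzcd}
\]
where $\phi$ is the blow-up of the conic $C$, $\alpha$ is a composition of flops, $\beta$ is the projection from $C$, and $\psi$ is the blow-up of a smooth curve $\Gamma$ on a smooth quadric threefold $Q\subset\mathbb{P}^4$. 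In particular, $\beta$ induces a crepant birational identification $(X_{12},D)\simeq_{\rm cbir}(Q,D')$, where $D'$ is the crepant transform of $D$.

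Then I would check that $D'$ is again a boundary and that the coregularity is preserved. Since $C\subset D$ we have $\mathrm{mult}_C D\geq 1$, so the coefficient of the exceptional divisor $E$ in the log pullback $D_Y$ (determined by $K_Y+D_Y=\phi^*(K_X+D)$) equals $\mathrm{mult}_C D-1\geq 0$; hence $D_Y\geq 0$. The flops $\alpha$ and the contraction $\psi$ preserve effectivity, so the crepant transform $D'$ is again a boundary, and crepant birational equivalence yields $\mathrm{coreg}(Q,D')=\mathrm{coreg}(X_{12},D)=0$. Applying Lemma~\ref{lem-quadric-cluster-type} to $(Q,D')$ produces a toric model, and since $(X_{12},D)\simeq_{\rm cbir}(Q,D')$ this shows that $(X_{12},D)$, and hence $X_{12}$, admits a toric model.

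The hard part is the second step: exhibiting the conic projection in the form above, with target exactly a smooth quadric $Q\subset\mathbb{P}^4$, and verifying that $\beta$ is crepant. Granting the classical factorization, the remaining verifications are routine, and the decisive hypothesis is precisely that the center of the projection is a conic lying inside a coregularity-zero boundary $D$; this is why Lemma~\ref{lem-1.6-to-222} was arranged to place a conic in a boundary of coregularity $0$ rather than merely a point or a line. Once effectivity of $D'$ is secured, the conclusion follows at once from Lemma~\ref{lem-quadric-cluster-type}.
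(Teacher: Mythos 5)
Your proposal follows the paper's proof essentially verbatim: fix a coregularity-zero boundary containing a smooth conic via Lemma~\ref{lem-1.6-to-222}, blow up the conic and use the factorization of the projection $\beta=|H-C|$ through a flop and a blow-down to a smooth quadric $Q\subset\mathbb{P}^4$, check that the crepant transform $D'$ stays effective, and conclude with Lemma~\ref{lem-quadric-cluster-type}. The only substantive difference is the provenance of the conic-projection diagram: the paper cites \cite[Theorem 5.9]{KPr23} for it (with the caveat that the cited statement assumes a singular conic, though the proof works for a smooth one), rather than the classical references you gesture at, which do not contain this particular factorization.
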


\begin{proof}
Put $X=X_{12}$. 
By Lemma \ref{lem-1.6-to-222}, there exists a boundary $D$ on $X$ such that $D$ contains a smooth conic~$C$ on $X$, and such that $\mathrm{coreg}(X, D)=0$. 
By \cite[Theorem 5.9]{KPr23}, for any smooth conic $C$ on $X$, there exists the following commutative diagram\footnote{Note that in the statement of \cite[Theorem 5.9]{KPr23}, the conic $C$ is assumed to be singular, but the proof works for a smooth conic as well.}:
\begin{equation}
\begin{tikzcd}
Y \ar[rr, dashed, "\alpha"] \ar[d, "\phi"] &  & Y' = \mathrm{Bl}_\Gamma Q \ar[d, "\psi"]  \\
X = X_{12} \subset
\mathbb{P}^{8} \ar[rr, dashed, "\beta"] & & Q\subset \mathbb{P}^4 
\end{tikzcd}
\end{equation}
where 
\begin{itemize}
    \item 
    $\phi$ is the blow-up of $C$ on $X$,
    \item 
    $\alpha$ is a flop,
    \item 
    $\psi$ is the blow-up of a curve $\Gamma$ of degree $10$ and genus $7$ on $Q$,
    \item 
    a birational map $\beta$ is given by the linear system $|H-C|$.
\end{itemize}
Since $C\subset D$, the strict transform $D'$ of $D$ on $Q$ will be a boundary. Also, $(Q, D')$ is an lc log CY pair of coregularity~$0$. Applying Lemma \ref{lem-quadric-cluster-type}, we conclude that $(Q, D')$ admits a toric model, hence $(X, D)$ admits a toric model as well.   
\end{proof}

\section{\texorpdfstring{Fano threefolds with $\rho(X)>1$}{Fano threefolds with rho>1}}
\label{sec-some-fano-threefolds}

In this section, we aim to prove the main theorem for smooth Fano $3$-folds $X$ with $\rho(X)>1$
which are not blow-ups of other smooth Fano $3$-folds.

\label{sec-higher-picard}

\begin{lemma}
\label{lem-2.32-Q}
A Fano threefold in the family \textnumero\,2.32 admits a toric model. 
\end{lemma}

\begin{proof}
Let $X$ be a Fano threefold in the family \textnumero\,2.32, that is, $X$ is a divisor of bidegree $(1,1)$ in $\mathbb{P}^2\times\mathbb{P}^2$. We can blow up a smooth rational curve $C\subset X$ which is of type $(1,1)$ in $\mathbb{P}^2\times\mathbb{P}^2$ to obtain a Fano variety~$Y$ in the family \textnumero\,3.20. It fits into the following commutative diagram: 
\begin{equation}
\label{diagram-2-32}
\begin{tikzcd}
Y \ar[rrd, "\psi"] \ar[d, "\phi"] &  & \ \\
X \subset
\mathbb{P}^{2}\times\mathbb{P}^2 \ar[rr, dashed, "\alpha"] & & Q\subset \mathbb{P}^4.
\end{tikzcd}
\end{equation}
Here the map $\psi$ is the blow-up of two disjoint lines $L_1$ and $L_2$ on a smooth quadric $Q\subset \mathbb{P}^4$.
By \cite[Lemma 11.3]{ALP23} we can pick a boundary $D=D_1+D_2+D_3+D_4$ on $X$ such that $D_1, D_2\sim (1,0)$, $D_3,D_4\sim (0,1)$, and $\mathrm{coreg}(X, D)=0$. Moreover, we can pick $D$ such that $C\subset D$. Then in its log pullback $(Y, D_Y)$, $D_Y$ is a boundary, and hence in the pushdown $(Q, D')$ we have that $D'$ is a boundary as well. By Lemma \ref{lem-quadric-cluster-type} we see that $(Q, D')$ admits a toric model, hence $(X, D)$ admits a toric model as well.
\end{proof}

\begin{lemma}\label{lem:2.24}
A Fano threefold in the family \textnumero\,2.24 admits a toric model. 
\end{lemma}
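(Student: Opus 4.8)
The plan is to use the $\pp^1$-bundle structure of $X=X_{2.24}$ over $\pp^2$ together with a single elementary transformation that carries a coregularity-zero $1$-complement onto the torus-invariant boundary of $\pp^1\times\pp^2$; the toricity of the target then finishes the argument through the remark following Definition~\ref{def:toric-model}, with no need for an analogue of Theorem~\ref{thm-Ducat}.

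First I would fix the geometry. Writing $X\subset\pp^2_x\times\pp^2_y$ as the bidegree-$(1,2)$ divisor $\{\sum_i x_iQ_i(y)=0\}$, the second projection $p\colon X\to\pp^2_y$ is a $\pp^1$-bundle for general $X$, since the three conics $Q_0,Q_1,Q_2$ have no common zero; the first projection $p_1\colon X\to\pp^2_x$ is a conic bundle with cubic discriminant. Following the construction in~\cite{ALP23}, I would take a coregularity-zero $1$-complement $D=D_1+D_2+D_3\in|{-}K_X|$ with $D_1,D_2\in|\oo(1,0)|$ sections of $p$ and $D_3=p^{-1}(\{m=0\})\in|\oo(0,1)|$ a fibre-type divisor. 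The essential choice is to take $D_1,D_2$ vanishing at a point $x_0\in\pp^2_x$ lying on the discriminant cubic: then the conic $C_{x_0}=\{Q_{x_0}(y)=0\}$ over which the two sections meet degenerates into a pair of lines $M_1\cup M_2\subset\pp^2_y$, and $\Gamma:=D_1\cap D_2=\{x_0\}\times C_{x_0}$ is precisely the section of $p$ over $B:=M_1\cup M_2$.

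Next I would perform the elementary transformation of $\pp^1$-bundles over $\pp^2_y$ along $B$:
\[
\begin{tikzcd}
Y \ar[rrd,"\psi"] \ar[d,swap,"\phi"] & & \\
X \ar[rr,dashed,"\alpha"] & & \pp^1\times\pp^2
\end{tikzcd}
\]
where $\phi$ blows up $\Gamma$ and $\psi$ contracts the strict transform $F$ of the old fibre-surface $p^{-1}(B)$. Since $\Gamma\subset D_1\cap D_2$ gives $\mult_\Gamma D=2$, the log pullback is the effective boundary $D_Y=\phi^*D-E_\phi=D_1'+D_2'+D_3'+E_\phi$. Bookkeeping of the transformation shows that the two meeting sections $D_1',D_2'$ separate into the two disjoint toric sections of $\pp^1\times\pp^2$, that $D_3'$ becomes the vertical divisor over $\{m=0\}$, and that the new exceptional $E_\phi$ becomes the fibre-surface $(p')^{-1}(B)=(p')^{-1}(M_1)+(p')^{-1}(M_2)$. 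Hence $D':=\psi_*D_Y$ is the sum of two sections and the three vertical divisors over $M_1,M_2,\{m=0\}$; choosing $m$ generically so that these three lines bound a triangle in $\pp^2_y$, the divisor $D'$ is exactly the reduced torus-invariant boundary of $\pp^1\times\pp^2$. By the remark after Definition~\ref{def:toric-model}, $(\pp^1\times\pp^2,D')\simeq_{\rm cbir}(\pp^3,H_0+\dots+H_3)$, so $(X,D)$ admits a toric model.

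The main obstacle is checking that $\alpha$ is genuinely crepant even though it contracts $F=p^{-1}(B)$, which is not a component of $D$. One must verify that $F$ has log discrepancy zero over $(\pp^1\times\pp^2,D')$: writing $B'=\psi(F)$ for the new section over $B$, at a general point of $B'$ lying over $M_i$ only the divisor $(p')^{-1}(M_i)$ passes through it, so $\mult_{B'}(D')=1$, the blow-up discrepancy $1$ of $K$ is cancelled, and $a(F,\pp^1\times\pp^2,D')=0$; symmetrically $a(E_\phi,X,D)=0$ because $\Gamma=D_1\cap D_2$. This matching of log discrepancies is exactly what yields $K_Y+D_Y=\phi^*(K_X+D)=\psi^*(K_{\pp^1\times\pp^2}+D')$ and identifies $D'$ with the toric boundary. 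A secondary point, supplied by~\cite{ALP23}, is that placing $x_0$ on the discriminant still produces an lc pair $(X,D)$ of coregularity zero.
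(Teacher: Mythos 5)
Your construction is essentially the paper's own proof. The paper realizes the very same birational map $X\dashrightarrow\pp^1\times\pp^2$ by projecting the first factor $\pp^2_x$ from the point $P=x_0$ (regularized by blowing up the degenerate conic fiber $\Gamma=\{x_0\}\times(M_1\cup M_2)\subset D_1\cap D_2$ and contracting the strict transform of $p^{-1}(M_1\cup M_2)$), takes $D_1,D_2$ of type $(1,0)$ through $x_0$ plus one vertical component of type $(0,1)$, and concludes, exactly as you do, that the pushed-forward boundary is the full torus-invariant boundary of $\pp^1\times\pp^2$, with no appeal to Theorem~\ref{thm-Ducat}. Your phrasing as an elementary transformation of the $\pp^1$-bundle $p=\mathrm{pr}_2$ is the same map in different words. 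One point where your version is actually the more robust one: the third component must be vertical over a line that is \emph{not} a component of the degenerate conic, since otherwise $D$ has multiplicity $3$ along a component of $\Gamma$, the pair is not lc, and that component is contracted by $\alpha$; your generic $\{m=0\}$ guarantees this (the paper's displayed $D_3=\{y_2=0\}$ with $f_0=y_1y_2$ has to be read as a transverse line, e.g.\ $\{y_0=0\}$, for its computation of $D'$ to come out).

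Two corrections. First, your crepancy check misstates the required log discrepancy: since $F$ is not a component of $D$, it appears with coefficient $0$ in $D_Y$, so what must be verified is $a(F;\pp^1\times\pp^2,D')=1$, not $0$. Log discrepancy $0$ would put $F$ with coefficient $1$ into the log pullback of $(\pp^1\times\pp^2,D')$ and would contradict the equality $\phi^*(K_X+D)=\psi^*(K_{\pp^1\times\pp^2}+D')$ you are establishing. Your actual computation --- $\mult_{B'}D'=1$ cancelling the discrepancy $+1$ of $K$ --- shows precisely that $F$ has coefficient $0$, i.e.\ log discrepancy $1$; only the label ``$=0$'' is wrong, whereas for $E_\phi$ the log discrepancy genuinely is $0$ (coefficient $1$), consistent with $\mult_\Gamma D=2$. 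Second, drop the ``general $X$'' hypothesis: the lemma concerns every member of the family, and smoothness already forces $Q_0,Q_1,Q_2$ to have no common zero --- at a common zero $y^*$, Euler's relation gives $(y^*)^{T}\bigl(\partial_j Q_i(y^*)\bigr)=0$, so the matrix $\bigl(\partial_j Q_i(y^*)\bigr)$ is singular and any $x^*$ in its kernel yields a singular point $(x^*,y^*)$ of $X$. Hence $\mathrm{pr}_2$ is a $\pp^1$-bundle for all smooth members and your argument covers the whole family.
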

\begin{proof}
Let $X$ be a Fano threefold of type \textnumero\,2.24, that is, $X$ is a divisor of bidegree $(1,2)$ in $\mathbb{P}^2\times\mathbb{P}^2$. We establish the desired result by working explicitly in coordinates. By definition, $X$ is given by equation 
\[
x_0f_0(y_0,y_1,y_2)+x_1f_1(y_0,y_1,y_2)+x_2f_2(y_0,y_1,y_2)=0
\]
where $\deg f_i(y_0,y_1,y_2)=2$.  
Let $C$ be a degenerate fiber of a natural conic bundle $\mathrm{pr}_1\colon X\to\mathbb{P}^2$. After a change of coordinates we may assume that $C$ lies over a point $P=[1: 0: 0]$ and coincides with the set $\{P\}\times \{f_0(y)=y_1y_2=0\}$. The induced map $\gamma\colon \text{Bl}_{P}\mathbb{P}^2\times\mathbb{P}^2\rightarrow\mathbb{P}^1\times\mathbb{P}^2$ regularizes the projection map $\alpha\colon \mathbb{P}^2\times\mathbb{P}^2\dashrightarrow\mathbb{P}^1\times\mathbb{P}^2$ from the plane $\{P\}\times\mathbb{P}^2$. Let $\phi\colon Y\rightarrow X$ be the induced blow-up of $X$ in the curve $C$. We obtain the following commutative diagram:
\begin{equation}
\begin{tikzcd}
 Y  \ar[d, "\phi"] \ar[rrd, "\psi"]  & & \ \\
X\subset
\mathbb{P}^{2}\times\mathbb{P}^2 \ar[rr, dashed, "\alpha"] & &\mathbb{P}^1\times\mathbb{P}^2.
\end{tikzcd}
\end{equation}
Note that the map $\alpha$ is birational on $X$. Indeed, using the equation for $X$, we can express $x_0$ in terms of other variables, hence we  obtain the inverse map  $\mathbb{P}^1\times\mathbb{P}^2\dashrightarrow X$, defined on the set $\{f_0\neq0\}\subset \mathbb{P}^1\times\mathbb{P}^2$.

Let $D_1=\{x_1=0\}$, $D_2=\{x_2=0\}$ and $D_3=\{y_2=0\}$ be divisors on $X$. Put $D=D_1+D_2+D_3$, then $K_X+D\sim 0$. Denote by $D_Y$ the log pullback of $D$ on $Y$ under the map $\phi$ and let $D'$ be its pushdown under the map $\psi$. Note that the exceptional divisor $E$ of the morphism $\gamma$ maps isomorphically on $\mathbb{P}^1\times\mathbb{P}^2$ under the map $\psi$. We denote by $E|_{Y}=E_1+E_2$ the exceptional divisor of $\phi$. By the construction of the blow-up, $E_1+E_2$ maps to $\{f_0(y)=0\}\subset\mathbb{P}^1\times\mathbb{P}^2$ under the map $\psi$. Let us note that $\alpha_* {D_i}$ is given by the same equation in $\mathbb{P}^1\times\mathbb{P}^2$ as $D_i$ in $\mathbb{P}^2\times\mathbb{P}^2$. Consequently, we have $D'=B_0+B_1+B_2+H_0+H_1$, where $B_i=\{y_i=0\}$ and $H_i=\{x_i=0\}$. We can see that $D'$ is invariant under standard torus action on $\mathbb{P}^1\times\mathbb{P}^2$, hence $(\mathbb{P}^1\times\mathbb{P}^2,D')$ is a toric pair. We conclude that $(X,D)$ admits a toric model. 

\end{proof}

\begin{lemma}[{cf. \cite[Proposition 5.1]{CTT24}}]
\label{lem-2.18}
A Fano threefold in the family \textnumero\,2.18 admits a toric model. 
\end{lemma}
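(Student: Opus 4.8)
The plan is to follow the same template used throughout this section: produce an explicit boundary $D$ of coregularity zero on $X$, find a distinguished curve (a line or conic) contained in $D$, and use a known birational link from the family \textnumero\,2.18 to project $X$ to a variety for which a toric model is already available (either $\pp^3$, a quadric $Q$, or $\pp^1\times\pp^2$). The reference to \cite[Proposition 5.1]{CTT24} strongly suggests that the relevant link is a projection realizing $X_{2.18}$ as a blow-up of a quadric $Q\subset\pp^4$ along a curve, so that the structure of the proof should parallel Lemma~\ref{lem-2.32-Q}. Recall that a Fano threefold of type \textnumero\,2.18 is a double cover of $\pp^1\times\pp^2$ branched over a divisor of bidegree $(2,2)$, equivalently it carries a conic bundle structure over $\pp^2$; this will be the geometric input controlling the link.

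First I would recall the explicit birational geometry of $X_{2.18}$. Using the conic bundle structure $X\to\pp^2$ (or the double-cover description), I would identify a curve $C$ on $X$ — most likely a degenerate fiber or a section-type curve — whose blow-up $\phi\colon Y\to X$ produces a smooth variety admitting a second contraction $\psi\colon Y\to Q$ onto a smooth quadric in $\pp^4$, fitting into a diagram
\begin{equation}
\begin{tikzcd}
Y \ar[rrd, "\psi"] \ar[d, "\phi"] &  & \ \\
X \ar[rr, dashed, "\alpha"] & & Q\subset\pp^4,
\end{tikzcd}
\end{equation}
exactly in the spirit of \eqref{diagram-2-32}. The map $\alpha$ would be a projection, and $\psi$ would contract an exceptional divisor onto a curve in $Q$.

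Next I would invoke the work of Avilov, Loginov, and Przyjalkowski \cite{ALP23} to produce a boundary $D$ on $X$ with $\mathrm{coreg}(X,D)=0$ and with the additional property that $C\subset D$; this is the analog of \cite[Lemma 11.3]{ALP23} used in Lemma~\ref{lem-2.32-Q}. Because $C$ lies in $D$, the log pullback $(Y,D_Y)$ of $(X,D)$ has $D_Y$ an honest boundary (the coefficient of the exceptional divisor of $\phi$ remains nonnegative), and hence its pushforward $(Q,D')$ under $\psi$ is a boundary of coregularity zero as well. Applying Lemma~\ref{lem-quadric-cluster-type} then yields that $(Q,D')$ admits a toric model, and since $(X,D)\simeq_{\rm cbir}(Q,D')$ by construction, $(X,D)$ admits a toric model too, proving the claim.

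The main obstacle I anticipate is twofold. The geometric one is identifying the correct curve $C$ and the correct birational link: one must verify that the projection $\alpha$ is genuinely birational on $X$ and lands on a \emph{smooth} quadric (or, failing that, on $\pp^1\times\pp^2$, which would route through Lemma~\ref{lem:2.24}'s toric conclusion instead), and that $\psi$ is a divisorial contraction of the expected type. The more delicate point is ensuring that one can simultaneously arrange $\mathrm{coreg}(X,D)=0$ \emph{and} $C\subset D$ — that is, that the coregularity-zero complement constructed in \cite{ALP23} can be chosen to contain the distinguished curve. If the relevant lemma in \cite{ALP23} already produces such a $D$ (as in the other cases of this section), the argument closes immediately; otherwise one must perturb the boundary within its linear system while preserving both the log canonical property and the $0$-dimensional log canonical center, which is the crux of the verification.
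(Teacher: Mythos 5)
Your overall template --- a coregularity-zero boundary containing a distinguished curve, a blow-up/blow-down link, then a known toric-model result --- is indeed the paper's template, and your boundary step matches: the paper takes $D=D_1+D_2+D_3$ with $D_1,D_2\sim H$, $D_3\sim F$ from \cite[Lemma 11.5]{ALP23} and arranges that a degenerate conic fiber $L=L_1+L_2$ of the conic bundle $X\to\pp^2$ (over a general point of the discriminant quartic) lies in $D_1$. But there is a genuine gap in your endgame. The actual link embeds $X$ into $\pp^6$ by $|H+F|$, blows up only the single component $L_1$ (which is a line in this embedding), and projects from it. The resulting morphism $\psi\colon Y\to\pp^4$ is \emph{small} --- it contracts just the strict transform of the other component $L_2$ to a point --- not a divisorial contraction onto a curve as you predicted, and its image is a quadric of rank $4$, i.e.\ a \emph{singular} quadric. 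Consequently Lemma~\ref{lem-quadric-cluster-type} does not apply: that lemma is stated for smooth quadrics, and its proof (via Lemma~\ref{lem-singular-boundary} and projection from a singular point of the boundary) uses smoothness. Your proposed fallback through $\pp^1\times\pp^2$ and Lemma~\ref{lem:2.24} does not match the geometry either; the link simply does not land there.

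The missing idea is one further projection. The paper takes the crepant transform $D_Q$ of $D$ on the singular quadric $Q$ and projects $Q$ from a point of $D_Q$ to $\pp^3$, producing a log Calabi--Yau pair $(\pp^3,D')$ of coregularity zero, to which Theorem~\ref{thm-Ducat} applies directly; this replaces the appeal to Lemma~\ref{lem-quadric-cluster-type}. You did flag smoothness of the target quadric as the main risk, which is the right instinct, but as written your argument breaks exactly at that point: the step ``apply Lemma~\ref{lem-quadric-cluster-type}'' fails on the rank-$4$ quadric that the birational geometry of the family \textnumero\,2.18 actually produces, and no substitute step is supplied.
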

\begin{proof}
Let $X$ be a Fano threefold of type \textnumero\,2.18, that is, $X$ is double cover of $\mathbb{P}^1\times\mathbb{P}^2$ ramified in a divisor $R$ of bidegree $(2,2)$. Note that the natural map $\pi_1\colon X\to \mathbb{P}^1$ is a quadric bundle. Also, we have that $\pi_2\colon X\to \mathbb{P}^2$ is a conic bundle whose discriminant curve $\Delta$ is a quartic on $\mathbb{P}^2$. Let $H$ be the pullback of a line from $\mathbb{P}^2$ and let $F$ be the pullback of a point from $\mathbb{P}^1$. We have 
$
K_X \sim -F - 2H.
$
Using \cite[Lemma 11.5]{ALP23}, we can pick a boundary $D=D_1+D_2+D_3$ where $D_1\sim H$, $D_2\sim H$, $D_3\sim F$ with $K_X+D\sim0$ and such that $\mathrm{coreg}(X, D)=0$.

Let $L=L_1+L_2$ be a fiber of $\pi_2$ over a general point of $\Delta$, so $L_1$ and $L_2$ are smooth rational curves intersecting in one point. We may also assume that $L\subset D_1$. Observe that the divisor $H+F$ is very ample and the linear system $|H+F|$ defines an embedding $g\colon X\to \mathbb{P}^6$. In fact, the image $g(X)$ belongs to the cone over $\mathbb{P}^1\times\mathbb{P}^2$ which is embedded in $\mathbb{P}^5$. Note that $g(L_1)$ is a line in $\mathbb{P}^6$. 

Let $\phi\colon Y\to X$ be the blow-up of $L_1$ on $X$. Let $E\simeq \mathbb{F}_1$ be the exceptional divisor of $f$. Then $Y$ is a weak Fano threefold. Put $B=\phi^*(H+F) - E$. Then $B^3=2$ and the divisor $B$ is big and nef. On $Y$, the map given by the linear system $|B|$ regularizes the projection map from $L_1$ on $X\subset \mathbb{P}^6$. So we have a small birational morphism $\psi\colon Y\to \mathbb{P}^4$ which contracts the strict preimage of $L_2$ on $X_1$. We obtain the following diagram:
\begin{equation}
\begin{tikzcd}
Y \ar[rrd, "\psi"] \ar[d, "\phi" ] &  & \ \\
X \subset
\mathbb{P}^{6} \ar[rr, dashed, "\alpha"] & & Q\subset \mathbb{P}^4.
\end{tikzcd}
\end{equation}
Note that the image $Q=\psi(Y)$ has degree $2$ in $\mathbb{P}^4$, hence $Q$ is a singular quadric hypersurface. In fact, it is a quadric of rank $4$. 
Let $D_Q$ be the crepant transform of $D$ on $Q$. Project $Q$ to $\mathbb{P}^3$ from a point that belongs to $D_Q$ to obtain a pair $(\mathbb{P}^3, D')$ where $D'$ is the strict transform of $D_Q$ on $\mathbb{P}^3$. By Theorem \ref{thm-Ducat}, the pair $(\mathbb{P}^3, D')$ admits a toric model. Hence the pair $(X, D)$ admits a toric model as well.
\end{proof}

\begin{lemma}
\label{lemma:3-2}
Let $X$ be a Fano threefold in the family \textnumero \,$3.2$.
Then $X$ admits a toric model. 
\end{lemma}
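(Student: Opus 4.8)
The plan is to mimic the arguments used for the other Fano $3$-folds of Picard rank $>1$: exhibit a coregularity-zero $1$-complement $(X,D)$ coming from \cite{ALP23}, run an explicit crepant birational transformation ending at a pair $(\mathbb{P}^3,D')$ with $D'$ effective, and then invoke Theorem~\ref{thm-Ducat}. First I would recall the birational geometry of a member $X$ of the family \textnumero\,3.2 from its Mori--Mukai description: $X$ admits a divisorial contraction $\phi\colon X\to Y$, where $Y$ is the ``special blow-down'' mentioned in the introduction. Since $X_{3.2}$ is not the blow-up of a smooth Fano $3$-fold, the target $Y$ will be singular (or merely a weak Fano), but it should carry a natural birational map $\mu\colon Y\dashrightarrow\mathbb{P}^3$, realized as a linear projection in a suitable projective embedding. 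Composing yields $X\dashrightarrow\mathbb{P}^3$ together with a resolving diagram, as in Lemmas~\ref{lem-2.18} and \ref{lem-2.32-Q}.

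Next I would fix the boundary. Using the relevant statement of \cite{ALP23}, choose $D=\sum D_i$ on $X$ with $K_X+D\sim 0$ and $\mathrm{coreg}(X,D)=0$, arranged so that the $\phi$-exceptional divisor is a component of $D^{=1}$ and so that the center of the projection $\mu$ lies in the crepant transform of $D$ on $Y$. Because $\phi$ is a birational contraction of a log CY pair, $(Y,D_Y)\simeq_{\rm cbir}(X,D)$ with $D_Y=\phi_*D$ an effective boundary. Since the projection center is contained in $D_Y$, the crepant transform $D'=\mu_*D_Y$ on $\mathbb{P}^3$ remains effective, so $(\mathbb{P}^3,D')\simeq_{\rm cbir}(X,D)$. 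As the dual complex is a crepant birational invariant, $(\mathbb{P}^3,D')$ is an lc log CY pair of coregularity $0$ automatically.

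Theorem~\ref{thm-Ducat} then gives a toric model of $(\mathbb{P}^3,D')$, which transports back through the crepant birational equivalence to a toric model of $(X,D)$, and hence of $X$. The step I expect to be the crux is controlling effectivity of the crepant transform across the special blow-down: the coregularity is automatically preserved, so the real content is to verify that the \cite{ALP23} boundary can be chosen to contain both the contracted divisor and the locus blown up by $\mu$, so that no negative coefficients appear on $\mathbb{P}^3$. Pinning down the precise singular model $Y$ and confirming that it still projects birationally onto $\mathbb{P}^3$ with $D'$ a genuine boundary is where the real work lies; the conclusion is then a formal consequence of Theorem~\ref{thm-Ducat}.
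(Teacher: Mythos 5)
Your outline reproduces the paper's strategy for this family---contract $X$ to a blow-down $Y$, project $Y$ birationally to $\mathbb{P}^3$ from a center lying inside the boundary, and invoke Theorem~\ref{thm-Ducat}---but it stops exactly where the proof begins: everything you defer as ``where the real work lies'' \emph{is} the paper's argument, and none of it is routine. Concretely, the paper (1) identifies $Y$ explicitly as the image of $X$ under the morphism induced by the tautological system $|M|$ of the ambient bundle $\mathbb{P}_{\mathbb{P}^1\times\mathbb{P}^1}(\mathcal{O}^{\oplus 2}\oplus\mathcal{O}(-1,-1))$: it lies on a rank-$4$ quadric $Q'\subset\mathbb{P}^5$, has degree $5$, carries a quadric bundle structure over $\mathbb{P}^1$, and the contraction $g\colon X\to Y$ collapses precisely the component $D_1\sim L$ of the boundary of \cite[Lemma 11.9]{ALP23} along one of its rulings; (2) takes as projection center not a point but a line: a $(-1)$-curve $L_1$ on the cubic-surface component $D_3'$, which must be chosen disjoint from the vertex line $L'$ of the cone $Q'$ precisely so that the splitting $N_{L_1/Y}=\mathcal{O}\oplus\mathcal{O}(-1)$ holds; and (3) proves birationality of the resulting map to $\mathbb{P}^3$ not as a formality but by blowing up $L_1$ and computing $(\psi^*M_1-E)^3=1$, a computation that depends on the normal bundle from step (2) via $E^3=-c_1(N_{L_1/Y})=1$. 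Producing $Y$, exhibiting $L_1$, and verifying this degree count are the content of the lemma; your proposal names slots for them but fills none.

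By contrast, the parts you flag as the crux are the easy ones. Effectivity of the crepant transform is automatic in this setup: pushforward under the contraction $g$ preserves effectivity, and since the blown-up curve $L_1$ lies in a reduced boundary component, the exceptional divisor of its blow-up appears with nonnegative coefficient in the crepant pullback, so the image on $\mathbb{P}^3$ is again a boundary. Likewise, preservation of coregularity is, as you note, a crepant invariant. So the proposal is a correct road map that matches the paper's route, but without the explicit geometry of $Y$, the choice of the line $L_1$, and the birationality verification, it is not a proof.
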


\begin{proof}
We have that $X$ is a smooth divisor in a $\mathbb{P}^2$-bundle over
$\mathbb{P}^1\times\mathbb{P}^1$ of the form
\[
\mathbb{P}=\mathbb{P}_{\mathbb{P}^1\times\mathbb{P}^1}(\oo^{\oplus 2}\oplus\oo(-1,-1)),\quad \quad \quad X\sim 2M + F_1 \sim 2L + 3F_1 + 2F_2  
\]
where $M$ is the tautological divisor, $F_1$ is the pullback of the divisor $l_1$ of bidegree $(1,0)$ on $\mathbb{P}^1\times\mathbb{P}^1$, $F_2$ is the pullback of the divisor $l_2$ of bidegree $(0,1)$ on $\mathbb{P}^1\times\mathbb{P}^1$, and $L\sim M-F_1-F_2$. Note that a general fiber $F_1$ is a smooth del Pezzo surface of degree $6$ and $F_2$ is a smooth del Pezzo surface of degree $3$.
By \cite[Lemma 11.9]{ALP23}, there exists a boundary $D$ on $X$ of coregularity $0$ such that $K_X+D\sim 0$, $D=D_1+D_2+D_3+D_4$ where $D_1\sim L$, and $D_2\sim F_1$, $D_3\sim D_4\sim F_2$ are general.

The linear system $|M|$ on $\mathbb{P}$ induces a morphism $\phi\colon \mathbb{P}\to \mathbb{P}^5$ whose image is  a quadric $Q'$ of rank $4$, and the map $\mathbb{P}\to Q'$ is a small resolution. One check that the map $\phi$ induces a contraction $g\colon X\to Y$ such that $D_1\simeq \mathbb{P}^1\times\mathbb{P}^1$ is contracted to $\mathbb{P}^1$ along one of its rulings. Also, the image $Y=\phi(X)$ has degree $5$ in $\mathbb{P}^5$. Note that $Y$ admits the structure of a  quadric bundle $Y\to \mathbb{P}^1$ (cf.  \cite[Proposition 3.9(iii)]{KPr18}). 

We denote by $F'_2$ the strict transform of $F_2$ on $Y$. Observe that $F'_2=\phi(F_2)\simeq F_2$ is a smooth cubic del Pezzo surface, and any divisor equivalent $F'_2$ contains the line $L'$ which is the vertex of the quadric cone $Q'\subset \mathbb{P}^5$. Also, $L'$ is a $(-1)$-curve on any smooth divisor equivalent to $F'_2$.
Let $D'_i$ be the strict transform of $D_i$ on $Y$. Pick a $(-1)$-curve $L_1$ on $D'_3\sim F'_2$ such that $L_1$ is disjoint from~$L'$. We have the following exact sequence 
\[
0 \to N_{L_1/D'_3}\to N_{L_1/Y} \to N_{D'_3/X_1}|_{L_1}\to 0
\]
and $N_{L_1/D'_3}=\oo(-1)$, $N_{D'_3/Y}|_{L_1}=\oo(L')|_{L_1}=\oo$. Thus the above sequence splits: $N_{L_1/Y}=\oo\oplus\oo(-1)$. We will show that a projection from $L_1$ in $\mathbb{P}^5$ induces a birational map $\alpha\colon Y\dashrightarrow \mathbb{P}^3$. Let $\psi\colon {X}_1\to Y$ be the blow-up of $L_1$ on $Y$. 
We obtain the following diagram:
\begin{equation}
\begin{tikzcd}
& & X_1 \ar[rrd, "\pi"] \ar[d, "\psi"] &  & \ \\
X \ar[rr, "\phi"] & & Y \subset \mathbb{P}^5 \ar[rr, dashed, "\alpha"] & & \mathbb{P}^3.
\end{tikzcd}
\end{equation}
We claim that the map $\pi$ is birational. 
First note that $\pi$ is given by the linear system $|\psi^*M_1-E|$ where $M_1$ is the class of hyperplane section in $\mathbb{P}^5$, and $E=\mathrm{Exc}(\psi)$. 
Compute 
\[
(\psi^* M_1-E)^3 = \psi^* M_1^3 - 3(\psi^* M_1)|_E^2 + 3(\psi^* M_1)|_E E|_E - E^3 = 
1. 
\]
where we used $E^3=-c_1(N_{L_1/Y})=1$. Therefore the map $\pi$ is birational. Thus we obtain a pair $(\mathbb{P}^3, D'')$ with reduced boundary which is crepant equivalent to $(X, D)$ and of coregularity $0$. By Theorem \ref{thm-Ducat}, the pair $(\mathbb{P}^3, D'')$ admits a toric model. Hence the pair $(X, D)$ admits a toric model as well.
\end{proof}

\section{Fano threefolds that are blow-ups}
\label{sec-blow-ups}

In this section, we consider Fano threefolds that can be realized as blow-ups of the varieties that we have already considered. First, we consider the case of products:

\begin{proposition}
\label{prop-product-model}
Let $X=\mathbb{P}^1\times S_d$ where $S_d$ is a smooth del Pezzo surface of degree $1\leq d\leq 5$. Then for $d\geq 2$, the variety $X$ admits a toric model. For $d=1$, a general element $X$ in the family admits a toric model.  
\end{proposition}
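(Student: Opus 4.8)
The plan is to reduce the statement to the analogous statement for the del Pezzo surface factor $S_d$ and then pass to the product, using the formal fact that a product of pairs admitting toric models again admits a toric model.

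First I would record the surface input. By \cite[Theorem 2.1 and Proposition 2.3]{ALP23}, a smooth del Pezzo surface $S_d$ of degree $d\geq 2$ admits a $1$-complement $D_S$ with $\mathrm{coreg}(S_d,D_S)=0$, and the same holds for a general member when $d=1$ (only special degree-one del Pezzo surfaces may be forced to coregularity one). Since $(S_d,D_S)$ is a log Calabi--Yau surface of index one and coregularity zero, \cite[Lemma 1.13]{GHK15b} yields $c_{\rm bir}(S_d,D_S)=0$; equivalently,
\[
(S_d,D_S)\simeq_{\rm cbir}(\mathbb{P}^2,H_0+H_1+H_2),
\]
so $S_d$ admits a toric model. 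This is the only step where generality of $X$ is used, and it is the main point of the argument: the rest is formal.

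Next I would build the product pair. Write $X=\mathbb{P}^1\times S_d$ with projections $p_1,p_2$, and set
\[
D:=p_1^*(\{0\}+\{\infty\})+p_2^*D_S.
\]
Then $(X,D)$ is log canonical and, by adjunction for products,
\[
K_X+D\sim p_1^*\bigl(K_{\mathbb{P}^1}+\{0\}+\{\infty\}\bigr)+p_2^*\bigl(K_{S_d}+D_S\bigr)\sim 0,
\]
so $D$ is a $1$-complement of $X$. If $\phi\colon V\to S_d$ and $\psi\colon V\to\mathbb{P}^2$ are birational contractions from a common crepant resolution realizing $(S_d,D_S)\simeq_{\rm cbir}(\mathbb{P}^2,H_0+H_1+H_2)$, then taking products with the identity crepant equivalence of the toric pair $(\mathbb{P}^1,\{0\}+\{\infty\})$ gives maps $\phi\times\mathrm{id}$ and $\psi\times\mathrm{id}$ exhibiting
\[
(X,D)\simeq_{\rm cbir}\bigl(\mathbb{P}^1\times\mathbb{P}^2,\ q_1^*(\{0\}+\{\infty\})+q_2^*(H_0+H_1+H_2)\bigr),
\]
where $q_1,q_2$ are the projections of $\mathbb{P}^1\times\mathbb{P}^2$; one checks that these remain birational contractions with the correct pushforwards and that the crepant equality of log pullbacks is inherited from $V$. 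The target is precisely the toric pair on the smooth toric threefold $\mathbb{P}^1\times\mathbb{P}^2$ whose boundary is the reduced sum of the torus-invariant divisors.

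Finally, by the remark in Definition~\ref{def:toric-model}, this toric pair is crepant birational to $(\mathbb{P}^3,H_0+H_1+H_2+H_3)$. Since crepant birational equivalence is an equivalence relation, composing the two equivalences gives $(X,D)\simeq_{\rm cbir}(\mathbb{P}^3,H_0+\cdots+H_3)$, i.e.\ $X$ admits a toric model. I expect the only technical point to be the verification that $\simeq_{\rm cbir}$ is preserved under taking products with a fixed pair, but this is routine; the genuine content is entirely contained in the surface statement for $S_d$, which is why degree one requires generality while degrees $d\geq 2$ do not.
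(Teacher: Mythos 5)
Your proposal is correct and follows essentially the same route as the paper: both reduce to the surface statement via \cite[Theorem 2.1 and Proposition 2.3]{ALP23} and the Gross--Hacking--Keel result, then take the product with $(\mathbb{P}^1,\{0\}+\{\infty\})$ and use that any two toric log Calabi--Yau pairs of the same dimension are crepant birationally equivalent. The only difference is that you spell out the product construction of the boundary and the product of crepant maps explicitly, which the paper treats as immediate.
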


\begin{proof}
If $S_d$ is either a smooth del Pezzo surface of degree $d\geq 2$ or a general smooth del Pezzo of degree $d=1$, then $S_d$ admits a $1$-complement of coregularity zero~\cite[Theorem 2.1 and Proposition 2.3]{ALP23}.
Hence, by~\cite[Theorem 1.6]{GHK15}, we know that $S_d$ admits a toric model.
Therefore, we can find a $1$-complement $B_d$ of $S_d$ for which $(S_d,B_d)\simeq_{\rm cbir} (\pp^2,H_0+H_1+H_2)$.
In particular, we get 
\[
(\pp^1,\{0\}+\{\infty\})\times (S_d,B_d) 
\simeq_{\rm cbir} 
(\pp^1,\{0\}+\{\infty\}) 
\times 
(\pp^2,H_0+H_1+H_2) 
\simeq_{\rm cbir} 
(\pp^3,H_0+\dots+H_3).
\]
In the last isomorphism, we use the fact that any two toric log Calabi--Yau pairs of the same dimension are crepant birationally equivalent.
Therefore, the variety $X$ admits a toric model.
\end{proof}

We do not consider the case $6\leq d\leq 9$ in the above proposition since $X$ is toric, and this case is treated in Proposition \ref{prop-toric-fanos}.

\begin{proposition}
\label{prop:blow-up}
Let $(Y, D_Y)$ be a lc log CY pair that admits a toric model. Let $\chi\colon X\to Y$ be a birational contraction, and let $(X, D)$ be the log pullback of $(Y, D_Y)$. Assume that $(X, D)$ is a pair. Then $(X, D)$ admits a toric model. 
\end{proposition}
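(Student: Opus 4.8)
The plan is to reduce the statement to the transitivity of crepant birational equivalence; the essential point is that the log pullback construction already displays $(X,D)$ as a crepant birational model of $(Y,D_Y)$. First I would record that $(X,D)$ is log Calabi--Yau: from the crepancy identity $K_X+D=\chi^*(K_Y+D_Y)$ we get $K_X+D\sim_\qq 0$, and by hypothesis $(X,D)$ is a pair, so $D$ is an honest boundary and the notion of a toric model applies to it.

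Next I would exhibit the crepant birational equivalence $(X,D)\simeq_{\rm cbir}(Y,D_Y)$ directly from Definition~\ref{def:crep-bir-isom}, taking $V=X$ and $D_V=D$ with $\phi=\id_X$ and $\psi=\chi$. Indeed $\phi$ and $\psi$ are birational contractions, $\phi_*D_V=D$, and $\psi_*D_V=\chi_*D=D_Y$ (this last equality follows by pushing forward $K_X+D=\chi^*(K_Y+D_Y)$ and using that $\chi$ is a birational contraction), while the crepancy condition $K_V+D_V=\phi^*(K_X+D)=\psi^*(K_Y+D_Y)$ holds tautologically. Since $(Y,D_Y)$ admits a toric model, we have $(Y,D_Y)\simeq_{\rm cbir}(\pp^n,H_0+\dots+H_n)$ with $n=\dim X$, and composing the two equivalences --- crepant birational equivalence being transitive, as one checks by pulling both defining diagrams back to a common resolution --- yields $(X,D)\simeq_{\rm cbir}(\pp^n,H_0+\dots+H_n)$. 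Thus $(X,D)$ admits a toric model.

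The argument is essentially formal, so there is no serious computational obstacle. The one point that requires genuine care --- and the reason the hypothesis that $(X,D)$ be a pair is imposed --- is the effectivity of $D$. The log pullback of a boundary along a birational contraction can acquire negative coefficients along $\chi$-exceptional divisors, precisely when $(Y,D_Y)$ has log discrepancy greater than one there; in that situation $(X,D)$ would fail to be a boundary pair and the conclusion could break down. Accordingly, the main thing to keep straight is that we \emph{invoke} this effectivity hypothesis rather than attempt to derive it.
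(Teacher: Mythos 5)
Your proof is correct and follows the same route as the paper: the paper's own proof is precisely the observation that the log pullback makes $(X,D)$ crepant birationally equivalent to $(Y,D_Y)$, and that crepant birational equivalence is transitive. Your write-up simply fills in the details (the choice $V=X$, $\phi=\id$, $\psi=\chi$ in Definition~\ref{def:crep-bir-isom}, and the role of the effectivity hypothesis), which the paper leaves implicit.
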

\begin{proof}
This follows from the fact that the pairs $(X, D)$ and $(Y, D_Y)$ are crepant birationally equivalent, and crepant birational equivalence is an equivalence relation.
\end{proof}

We start by analyzing the varieties that can be obtained as a blow-up of Fano threefolds of higher index. 
\begin{lemma}
\label{lem-rho-2-four-cases}
    Let $X:=X_{\rho.m}$ be a Fano threefold which is the blow-up of $Y$, where $X$ and $Y$ are as follows:
    \begin{enumerate}
    \item $\rho.m \in \{2.30, 2.28, 2.27, 2.25, 2.15, 2.12, 2.9, 2.4, 3.18, 3.14, 3.12, 3.6, 4.6\}$, and $Y=\mathbb{P}^3$;
    \item $\rho.m \in \{2.31, 2.29, 2.23, 2.21, 2.17, 2.13, 2.7, 3.20, 3.19, 3.15, 3.10, 4.4\}$, and $Y=Q\subset \mathbb{P}^4$;
    \item $\rho.m \in \{2.26, 2.22, 2.20, 2.14\}$, and $Y=V_5\subset \mathbb{P}^6$; or
    \item $\rho.m \in \{2.19, 2.16, 2.10\}$, and $Y=V_4\subset \mathbb{P}^5$.
\end{enumerate}
Then $X$ admits a toric model.
\end{lemma}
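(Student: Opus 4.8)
The plan is to reduce every case to its base $Y\in\{\pp^3,Q,V_5,V_4\}$ by means of Proposition~\ref{prop:blow-up}. In each case the blow-up $\pi\colon X\to Y$ is a birational contraction, being a birational morphism with connected fibres and $\pi_*\oo_X=\oo_Y$, so once we exhibit a log Calabi--Yau structure on the base whose log pullback to $X$ is a genuine boundary, the statement follows formally. I would therefore first recall that each base admits a toric model realized by a $1$-complement of coregularity zero: this is Theorem~\ref{thm-Ducat} for $Y=\pp^3$, Lemma~\ref{lem-quadric-cluster-type} for $Y=Q$, Lemma~\ref{V_5-model} for $Y=V_5$, and Lemma~\ref{V_4-model} for $Y=V_4$. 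For $V_4$ and $V_5$ I would keep in mind that those two lemmas require the boundary to be reducible, so the complement chosen on the base must have at least two components.

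The mechanism of the reduction is the following. Choosing a coregularity-zero $1$-complement $(Y,D_Y)$ and writing $K_X+D=\pi^*(K_Y+D_Y)$ for the log pullback, the pair $(X,D)$ is automatically lc log Calabi--Yau, since crepant pullback preserves these properties. Its only possibly negative coefficients occur along the exceptional divisors, where for each such $E$ one has $\operatorname{coeff}_E D=1-a(E,Y,D_Y)$; log canonicity of $(Y,D_Y)$ already forces this to be $\le 1$. The centers $Z$ arising in (1)--(4) are all (possibly disconnected) smooth curves, since a point blow-up of $\pp^3$ is the toric variety $X_{2.35}$ already listed in Proposition~\ref{prop-toric-fanos}. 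Thus each $Z$ has codimension two, and the requirement $Z\subset\operatorname{Supp}(D_Y)$ gives $\operatorname{mult}_Z D_Y\ge 1$, whence $a(E,Y,D_Y)=2-\operatorname{mult}_Z D_Y\le 1$ and $\operatorname{coeff}_E D\ge 0$. Consequently $D$ is an effective boundary and $(X,D)$ is a pair.

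The heart of the argument, and where I expect the main difficulty to lie, is producing for each of the thirty-odd families a coregularity-zero complement $(Y,D_Y)$ whose support contains the prescribed center $Z$ (and which is reducible over $V_4,V_5$). Concretely, the centers are lines, conics, and rational or elliptic curves of low degree, and one must arrange a degenerate anticanonical section on the base that passes through the given curve while retaining a $0$-dimensional lc center. For $Y=\pp^3$ a line is the intersection of two coordinate hyperplanes, so the toric boundary $H_0+\cdots+H_3$ already contains it, and plane conics or other low-degree curves are absorbed by replacing some coordinate hyperplanes by quadric or cubic sections through $Z$. For $Y=Q,V_4,V_5$ one uses hyperplane sections of the anticanonically embedded threefold; here I would invoke the explicit coregularity-zero boundaries constructed in~\cite{ALP23} for the corresponding Fano $3$-folds $X$, which are engineered precisely so that the exceptional divisor enters the boundary, equivalently so that $Z\subset\operatorname{Supp}(D_Y)$ after passing to $D_Y=\pi_*D$. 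I would verify case by case, using the classification of the blow-up structures of these families, that such a choice is available.

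With the boundary in hand the conclusion is immediate: by Proposition~\ref{prop:blow-up} the pair $(X,D)$, being the log pullback of $(Y,D_Y)$ along the birational contraction $\pi$, admits a toric model because $(Y,D_Y)$ does. Equivalently, transitivity of crepant birational equivalence yields $(X,D)\simeq_{\rm cbir}(Y,D_Y)\simeq_{\rm cbir}(\pp^3,H_0+\cdots+H_3)$. Hence $X$ admits a toric model in all four cases.
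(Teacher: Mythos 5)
Your proposal follows essentially the same route as the paper's proof: reduce to the base $Y$ via Proposition~\ref{prop:blow-up}, invoke the coregularity-zero $1$-complements constructed in \cite{ALP23} whose log pullbacks to $X$ remain pairs, and conclude with Theorem~\ref{thm-Ducat} for $\pp^3$, Lemma~\ref{lem-quadric-cluster-type} for $Q$, and (after checking reducibility of the boundary, exactly as the paper does) Lemmas~\ref{V_4-model} and~\ref{V_5-model} for $V_4$ and $V_5$. One minor inaccuracy: not all centers in the list are curves (families 3.19 and 3.14 involve point centers, where $Z\subset\Supp(D_Y)$ alone gives $a(E,Y,D_Y)=3-\mult_Z D_Y$, so multiplicity at least $2$ is needed for effectivity), but this does not damage your argument, since---like the paper---you ultimately defer the fact that the log pullback is a genuine pair to the explicit case-by-case constructions of \cite{ALP23}.
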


\begin{proof}
In ~\cite[Lemma 12.2, 13.1.1, 13.1.2]{ALP23}
 for each $X$ and $Y$ as in the statement
of the lemma, 
the authors construct a $1$-complement
$(Y,D_Y)$ with ${\rm coreg}(Y,D_Y)=0$
such that the log pull-back of $(Y,D_Y)$ to $X$ is a log pair.
For $Y=\mathbb{P}^3$ and $Y=Q\subset \mathbb{P}^4$, we conclude by applying Proposition \ref{prop:blow-up}, Theorem \ref{thm-Ducat}, and Lemma \ref{lem-quadric-cluster-type}. In the case $Y=V_4$ and $Y=V_5$, we need to check that the boundary divisor $D_Y$ is reducible. This is indeed the case by construction in \cite[Lemma 12.2.3, 12.2.4]{ALP23}.
Then we conclude by applying Proposition \ref{prop:blow-up}, Lemma \ref{V_4-model}, and Lemma \ref{V_5-model}.
\end{proof}

Lemma \ref{lem-rho-2-four-cases} covers all smooth rational Fano threefolds with Picard number $2$ which are non-toric, and not from the families \textnumero\,2.32, 2.24, 2.18 considered in Section \ref{sec-some-fano-threefolds}. 
Now, we treat the remaining families of smooth Fano threefolds with $\rho(X)\geqslant3$, except for the family \textnumero\,3.2 treated in Section \ref{sec-some-fano-threefolds}.

\begin{lemma}\label{lem:Fano-blow-up-8-cases}
    Let $X$ be a rational smooth Fano threefold such that $\rho(X)\geqslant3$.
    Assume that $X$ is the blow-up of some other Fano variety $Y$
    except for $\pp^3,Q,V_4$ or $V_5$.
    Then, the variety $X$ admits a toric model.
\end{lemma}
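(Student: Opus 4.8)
The plan is to reduce everything to the cases already established in the earlier sections by exploiting the classification of smooth Fano threefolds of Picard rank $\geq 3$. First I would invoke the Mori--Mukai classification to enumerate the smooth Fano threefolds $X$ with $\rho(X)\geq 3$ that are realized as blow-ups of a Fano variety $Y$ which is \emph{not} one of $\pp^3$, $Q$, $V_4$, or $V_5$. By the results of Section~\ref{sec-blow-ups} already proved in Lemma~\ref{lem-rho-2-four-cases}, the blow-ups of $\pp^3$, $Q$, $V_4$, and $V_5$ are exactly the ones excluded from the present statement, so the remaining families have a base $Y$ that is either a smooth Fano threefold of Picard rank one treated in Sections~\ref{sec-higher-index} and~\ref{sec-main-series}, a smooth Fano threefold of Picard rank $2$ treated in Lemma~\ref{lem-rho-2-four-cases} or in Section~\ref{sec-some-fano-threefolds}, or a product $\pp^1\times S_d$ handled in Proposition~\ref{prop-product-model}. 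The key point is that in every such case the base $Y$ has already been shown to admit a toric model.

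The central mechanism is Proposition~\ref{prop:blow-up}: if $(Y,D_Y)$ is an lc log CY pair that admits a toric model and $\chi\colon X\to Y$ is a birational contraction whose log pullback $(X,D)$ is again a pair, then $(X,D)$ admits a toric model. So for each remaining family I would take the $1$-complement $(Y,D_Y)$ of coregularity zero that realizes the toric model of the base $Y$, and verify that the blow-up center $Z\subset Y$ (a point or a smooth curve, according to Mori--Mukai) is contained in $D_Y$, or more precisely that the log pullback of $(Y,D_Y)$ under $\chi\colon X\to Y$ remains a genuine boundary (i.e.\ has coefficients in $[0,1]$). When the center $Z$ lies in the log canonical locus $D_Y^{=1}$, the discrepancy computation guarantees that $D$ stays effective with coefficients at most one, so $(X,D)$ is a pair and Proposition~\ref{prop:blow-up} applies directly. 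As in~\cite{ALP23}, these complements can be chosen compatibly with the relevant centers, so this verification is available family by family.

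The main obstacle is bookkeeping rather than a single hard idea: one must go through the finitely many families of Picard rank $\geq 3$ in the Mori--Mukai list, identify in each case the base $Y$ and the blow-up center, and confirm both that $Y$ falls into an already-settled class and that a coregularity-zero $1$-complement of $Y$ can be selected so that its log pullback to $X$ is a boundary. The step I expect to require the most care is ensuring, for those families whose base is itself of higher Picard rank, that the chosen complement on $Y$ passes through the blow-up center with the correct multiplicity; this is precisely where one cites the explicit constructions of~\cite[Section 13]{ALP23} to guarantee compatibility. Once that compatibility is in place for each family, the conclusion is immediate from Proposition~\ref{prop:blow-up} together with the toric models of the bases furnished by Lemmas~\ref{lem-quadric-cluster-type}, \ref{V_4-model}, \ref{V_5-model}, the results of Sections~\ref{sec-main-series} and~\ref{sec-higher-picard}, and Proposition~\ref{prop-product-model}.
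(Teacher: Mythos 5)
Your proposal reproduces the paper's opening move (take the complements of \cite[Lemma 13.1]{ALP23} that are compatible with the blow-up centers, then invoke Proposition~\ref{prop:blow-up}), but it has a genuine gap at the decisive step: you conflate ``$Y$ admits a toric model'' (for \emph{some} boundary) with ``the specific pair $(Y,D')$ admits a toric model,'' where $D'$ is the ALP23 complement chosen to pass through the center. Proposition~\ref{prop:blow-up} requires the latter. Note first that your enumeration of bases is off: since $X\to Y$ is a single blow-up and $\rho(X)\geq 3$, we have $\rho(Y)\geq 2$, so the Picard-rank-one Fanos of Sections~\ref{sec-higher-index} and~\ref{sec-main-series} never occur as bases, nor do the products $\pp^1\times S_d$ with $d\leq 5$. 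The bases that actually occur are: the $(1,1)$-divisor in $\pp^2\times\pp^2$, $\pp^1\times\pp^2$, $\pp^1\times\pp^1\times\pp^1$, $V_7$, $\pp_{\pp^2}(\oo\oplus\oo(2))$, $\pp_{\pp^1\times\pp^1}(\oo\oplus\oo(1,1))$, the family \textnumero\,2.18, and the blow-up of $Q$ along a conic. Most of these are toric, and the only toric-model statement available for them in the paper (Proposition~\ref{prop-toric-fanos}) concerns their torus-invariant boundary, which in general does \emph{not} contain the blow-up center (e.g.\ family \textnumero\,3.5 blows up a curve of bidegree $(5,2)$ in $\pp^1\times\pp^2$, and no such curve lies in the toric boundary). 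Conversely, the ALP23 complements do contain the centers but are not torus-invariant, and none of the results you cite at the end covers them: Lemmas~\ref{lem-quadric-cluster-type}, \ref{V_4-model}, \ref{V_5-model} and Section~\ref{sec-main-series} concern precisely the bases $Q$, $V_4$, $V_5$, $\pp^3$ and rank-one Fanos that are excluded from this lemma, and Proposition~\ref{prop-product-model} concerns products that are not bases here.

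What is missing — and what constitutes the bulk of the paper's proof — is a case-by-case birational construction showing that each compatible pair $(Y,D')$ itself admits a toric model. Concretely: for $Y=\pp^1\times\pp^2$ one blows up a bidegree-$(1,1)$ curve contained in $D'$ and contracts to $\pp^3$, landing in the setting of Theorem~\ref{thm-Ducat}; for $Y=\pp^1\times\pp^1\times\pp^1$ one first blows up a ruling to reach $\pp^1\times\pp^2$ and then repeats; for $Y=V_7$ and for the blow-up of $Q$ along a conic one pushes the pair down to $\pp^3$ resp.\ $Q$ and applies Theorem~\ref{thm-Ducat} resp.\ Lemma~\ref{lem-quadric-cluster-type}; for the $(1,1)$-divisor in $\pp^2\times\pp^2$ one locates a $(1,1)$-curve inside $D'$ and uses the projection to $Q$ as in Lemma~\ref{lem-2.32-Q}; and for the two toric projective bundles one exploits the special geometry that the centers (a plane quartic in a tautological plane, an elliptic curve in a tautological surface) \emph{do} lie in a torus-invariant divisor, so a toric complement through the center can be chosen — a fact that fails for the other toric bases. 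Without supplying these constructions, the chain ``compatibility $+$ Proposition~\ref{prop:blow-up} $\Rightarrow$ toric model'' does not close, so the proposal as written does not prove the lemma.
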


\begin{proof}
    According to~\cite[Lemma 13.1]{ALP23}, we can construct a $1$-complement $D'$ of $Y$ for which ${\rm coreg}(Y,D')=0$ and its log pull-back to $X$ remains a log pair.
    Thus, we only need to argue that $(Y,D')$
    admits a toric model. We quote the proof of \cite[Lemma 13.1]{ALP23} in case the result is not immediate and then construct a toric model for the corresponding boundary $D'$ on $Y$.
    We will proceed in 8 different cases depending on the isomorphism class of the variety $Y$:\\

\noindent \textit{Case 1:} The variety $Y$ is a smooth divisor of bidegree $(1,1)$ in $\pp^2\times \pp^2$. \\

In this case, $X$ belongs to the families \textnumero\,3.24, 3.13, 3.7, 4.7.
For any boundary $D'$ on $Y$ constructed in 
\cite[Lemma 13.1.3]{ALP23}, we can find a rational curve $C$ of bidegree $(1, 1)$ in $D'$. Using the commutative diagram~\eqref{diagram-2-32}, we obtain a boundary of coregularity zero on a quadric hypersurface in $\mathbb{P}^4$, which gives us a toric model.\\

\noindent \textit{Case 2:} The variety $Y$ is $\pp^1\times \pp^2$.\\

In this case, $X$ belongs to the families \textnumero\,3.22, 3.21, 3.17, 3.8, 3.5, 3.3, 4.5. 
We denote by $\pi_1$ and $\pi_2$ the natural projections from $Y$ to $\mathbb{P}^1$ and $\mathbb{P}^2$, respectively. We consider the following birational map $\phi:Y\dashrightarrow\mathbb{P}^3$, expicitly given by the formula
\[
\phi{([y_0:y_1], [x_0: x_1: x_2])=[x_0y_0 : x_0y_1 : y_1x_1: y_1x_2]}.
\]
Therefore, after blowing up of Y in a curve $C$ of bidegree $(1, 1)$ we obtain the following commutative diagram:
\begin{equation}
\begin{tikzcd}
\text{Bl}_CY \ar[rrd, ] \ar[d, swap, "\pi"] &  & \ \\
Y=\mathbb{P}^1\times \mathbb{P}^2 
 \ar[rr, dashed, "\phi"] & & \mathbb{P}^3
\end{tikzcd}
\end{equation}
where $\pi$ is a blow-up. 
The boundary $D'$ on $Y$ as in 
\cite[Lemma 13.1.4]{ALP23}, can be chosen such that $C\subset D'$. Thus, the log pull-back of $(Y,D')$ to $\pp^3$ gives a log Calabi--Yau pair  $(\pp^3,D'')$ of index one and coregularity zero.
Hence, the pair $(\pp^3,D'')$ and so $(Y,D')$ admit a toric model.\\

\noindent \textit{Case 3:} The variety $Y$ is $\mathbb{P}^1\times \mathbb{P}^1\times\mathbb{P}^1$.\\

In this case, $X$ belongs to the families \textnumero\,4.13, 4.8, 4.3, 4.1. 
Let $L_1$ and $L_2$ be two rational curves of degrees $(0, 0, 1)$ and $(1, 0, 0)$, respectively.
Note that Bl$_{L_i}Y\cong\text{Bl}_{C_1, C_2}\mathbb{P}^1\times\mathbb{P}^2$, where $C_1$ and $C_2$ are disjoint curves and each of them projects isomorphically on $\mathbb{P}^1$. Thus, we have the following commutative diagram:
\begin{equation}
\begin{tikzcd}
\text{Bl}_{L_i}Y \ar[rr, "\alpha_i"] \ar[d, swap, "\pi_i"] &  & \text{Bl}_{C_1, C_2}\mathbb{P}^1\times\mathbb{P}^2 \ar[d, "p"]
\ \\
Y=\mathbb{P}^1\times \mathbb{P}^1\times\mathbb{P}^1
 \ar[rr, dashed, "\phi_i"] & & \mathbb{P}^1\times\mathbb{P}^2.
\end{tikzcd}
\end{equation}
Here $\pi_i$ and $p$ are blow-ups, and $\alpha_i$ is an isomorphism. It is easy to  see that a divisor of degree $(0, 0, 1)$ maps to a divisor of degree $(1, 0)$ under the map $\phi_1$. Analogously, a divisor of degree $(1, 0, 0)$ maps to a divisor of degree $(1, 0)$ under the map $\phi_2$. 

For any boundary $D'$ on $Y$ as in 
\cite[Lemma 13.1.5]{ALP23}, we can choose $L_i$ such that log-pullback of $D'$ under that map $\pi_i$ is a boundary on $\text{Bl}_{L_i}Y$. After pushing down via the morphism $p$, we obtain a log Calabi--Yau pair $(\mathbb{P}^1\times\mathbb{P}^2, B)$ of coregularity zero. Moreover, $B$ contains a divisor of degree $(1, 0)$, hence it contains a rational curve $C$ of degree $(1, 1)$. As in the previous case, we can blow up $\mathbb{P}^1\times\mathbb{P}^2$ in $C$ to obtain a log Calabi--Yau pair on $\mathbb{P}^3$ of coregularity zero. We conclude after applying Theorem $\ref{thm-Ducat}$.\\

\noindent \textit{Case 4:} The variety $Y$ is $V_7$.\\

That is, the variety $Y$ is the blow-up of a point $P_1$ in
$\mathbb{P}^3$. In this case, the variety $X$ belongs to the families \textnumero\,3.23, 3.16, 3.11. 
For any boundary $D'$ on $Y$ as in 
\cite[Lemma 13.1.6]{ALP23}, its crepant transform to $X$ is a boundary $D$ with ${\rm coreg}(X,D)=0$. Furthermore, the push-forward of $D$ to $\mathbb{P}^3$ is a reduced boundary. Thus, we obtain a log Calabi--Yau pair of coregularity zero on $\mathbb{P}^3$, hence $(Y, D')$ admits a toric model.\\

\noindent \textit{Case 5:} The variety $Y$ is $\mathbb{P}_{\mathbb{P}^2}(\oo\oplus\oo(2))$.\\ 

Then $X$ is a Fano threefold in the family \textnumero\,$3.9$. Recall that $X$ is the blow-up of a smooth quartic curve that is contained in a projective plane equivalent to a tautological divisor on $Y$. So we can pick a toric boundary $D'$ on $Y$ containing this curve. Hence $D'$ pulls back to a boundary divisor $D$ on $X$. Since the pair $(Y, D')$ is toric, it has a toric model, and so does $(X, D)$.\\

\noindent\textit{Case 6:} The variety $Y$ is
$\mathbb{P}_{\mathbb{P}^1\times\mathbb{P}^1}(\oo\oplus\oo(1, 1))$.\\

In this case, the variety $X$ is a Fano threefold in the family \textnumero\,$4.2$. Recall that $X$ is the blow-up of an elliptic curve contained in a surface equivalent to a tautological divisor on $Y$. So we can pick a toric boundary $D'$ on $Y$ containing this curve. Hence $D'$ pulls back to a boundary divisor $D$ on $X$. Since the pair $(Y, D')$ is toric, it has a toric model, and so does $(X, D)$. \\

\noindent \textit{Case 7:} The variety $Y$ is a Fano threefold in the family \textnumero\,2.18. \\

In this case, $X$ is a Fano variety \textnumero\,3.4. 
Let $D'$ be a boundary on $Y$ as in 
\cite[Lemma 13.1.7]{ALP23}, see also Lemma \ref{lem-2.18}.
Then its crepant transform to $X$ is a boundary $D$ with ${\rm coreg}(X,D)=0$. By Lemma \ref{lem-2.18}, $(Y, D')$ has a toric model, and so does $(X, D)$.\\

\noindent \textit{Case 8:} The variety $Y$ is a blow-up of a smooth quadric $Q\subset \pp^4$ in a conic $C$.\\

In this case,
$X$ is a Fano threefold in the family \textnumero\,5.1. 
For a boundary $D'$ on $Y$ constructed in 
\cite[Lemma 13.1.10]{ALP23}, its crepant transform to $X$ is a boundary $D$ with ${\rm coreg}(X,D)=0$. Furthermore, the push-forward of $D$ to $Q\subset \mathbb{P}^4$ is a reduced boundary. Thus, we obtain a log Calabi--Yau pair of coregularity zero on $Q$, hence $(Y, D')$ admits a toric model.
\end{proof}

\section{Examples and questions}

In this section, we collect some examples
and questions.
The following three examples show that the three conditions: the generality, the smoothness, and the rationality, are indeed necessary for Theorem~\ref{introthm:3-fold}.

\begin{example}\label{ex:generality}
{\em 
In this example, we exhibit a rational smooth Fano $3$-fold $X$ that does not admit a $1$-complement of birational complexity zero. 
Let $S$ be a smooth del Pezzo surface
of rank one with ${\rm coreg}(S)\geq 1$.
Consider $X:=\pp^1\times S$. 
We argue that $X$ does not have coregularity zero.
If this was the case, then~\cite[Proposition 3.28]{Mor22} would imply that $S$ has coregularity zero. 
Hence, we would obtain a contradiction by~\cite[Proposition 2.3]{ALP23}.
}
\end{example}

\begin{lemma}\label{lem:ex-non-can-sing}
Let $X$ be a projective variety that has a closed point 
$x\in X$ which is an exceptional singularity
that is not canonical.
Then, $X$ does not admit a toric model.
\end{lemma}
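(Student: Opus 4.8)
The plan is to argue by contradiction, converting a hypothetical toric model into a numerical obstruction coming from the singularity at $x$. Suppose $X$ admits a toric model and let $(X,D)$ be the associated $1$-complement, so that $n:=\dim X$, the pair $(X,D)$ is log canonical with $K_X+D\sim 0$, and $(X,D)\simeq_{\rm cbir}(\pp^n,\Sigma)$ with $\Sigma=\sum_{i=0}^{n}H_i$. Passing to a common crepant resolution as in Definition~\ref{def:crep-bir-isom}, the relation $K_V+D_V=\phi^*(K_X+D)=\psi^*(K_{\pp^n}+\Sigma)$ yields $a(E;X,D)=a(E;\pp^n,\Sigma)$ for every divisorial valuation $E$ over $X$.

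The key asymmetry I would exploit is that the target pair is smooth. Since $\pp^n$ is smooth and $K_{\pp^n}+\Sigma\sim 0$ is Cartier, for any divisor $E$ realised on a smooth model $\pi\colon Y\to\pp^n$ the divisor $K_Y-\pi^*(K_{\pp^n}+\Sigma)$ is integral, whence $a(E;\pp^n,\Sigma)\in\zz$; combined with log canonicity this gives $a(E;\pp^n,\Sigma)\in\zz_{\ge 0}$, and in particular $a(E;\pp^n,\Sigma)\notin(0,1)$. By the crepant invariance above, no divisorial valuation $E$ over $X$ satisfies $a(E;X,D)\in(0,1)$.

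Now I would use the hypothesis on $x$. Since $x$ is an exceptional, hence log canonical, singularity that is not canonical, there is a divisorial valuation $E_0$ centred at $x$ with $a(E_0;X,0)<1$. Choosing a second exceptional divisor $F$ over $x$ meeting $E_0$ on a suitable log resolution, the log discrepancy $a(\,\cdot\,;X,0)$ is affine along the monomial family spanned by $E_0$ and $F$ and equals $a(E_0;X,0)<1$ at the $E_0$-end; hence infinitely many divisorial valuations $E$ centred at $x$ in this family satisfy $a(E;X,0)\in(0,1)$, and I would select two distinct ones $E_1\neq E_2$. Since $D\ge 0$ we have $a(E_i;X,D)=a(E_i;X,0)-\ord_{E_i}(D)\le a(E_i;X,0)<1$, while log canonicity of $(X,D)$ forces $a(E_i;X,D)\ge 0$; the previous paragraph then forces $a(E_i;X,D)=0$. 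Thus $E_1$ and $E_2$ are two distinct log canonical places of $(X,D)$ centred at $x$, contradicting the defining property of an exceptional singularity, namely that every complement of the germ $(X\ni x)$ carries at most one log canonical place over $x$. Therefore $X$ admits no toric model.

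The step I expect to be the main obstacle is precisely this passage from non-canonicity of $x$ to the existence of two distinct divisorial valuations centred at $x$ with log discrepancy in the open interval $(0,1)$: one must ensure these valuations are genuinely centred at $x$ and lie strictly inside $(0,1)$ rather than only at the boundary value $0$, which I would control using the continuity---affineness along monomial families---of the log discrepancy function. The two remaining ingredients, the integrality of log discrepancies over the smooth pair $(\pp^n,\Sigma)$ and their invariance under crepant birational equivalence, are routine.
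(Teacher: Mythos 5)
Your first two steps are sound: crepant invariance of log discrepancies, plus the fact that all log discrepancies of $(\pp^n,\sum H_i)$ are non-negative integers, correctly rules out values in $(0,1)$ for $a(\,\cdot\,;X,D)$ (in fact you could get this directly on $X$, since $K_X+D\sim 0$ makes $K_X+D$ Cartier). The genuine gap is exactly the step you flagged: producing \emph{two} distinct divisorial valuations centred at $x$ with $a(\,\cdot\,;X,0)\in(0,1)$. The monomial-family argument does not deliver this. On a log smooth model, the divisorial valuations in the family spanned by $E_0$ and $F$ correspond to coprime integer weights $(s,t)$, and the associated prime divisor $E_{s,t}$ satisfies $a(E_{s,t};X,0)=s\,a(E_0;X,0)+t\,a(F;X,0)$. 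For $s,t\geq 1$ this is at least $a(E_0;X,0)+a(F;X,0)$: the values do not approach $a(E_0;X,0)$ near the $E_0$-end of the family, they grow without bound, and only finitely many members (possibly only $E_0$ itself) have log discrepancy below $1$. Affineness does hold for \emph{normalized} quasi-monomial valuations, but those are rescalings $\tfrac{1}{N}\ord_E$ of prime divisors, and your integrality step applies only to honest prime divisors, so it cannot force such valuations to have log discrepancy $0$. Worse, since your argument for this step never uses exceptionality of $x$, it would apply verbatim to the cyclic quotient point $x\in X=\frac{1}{n}(1,1)$, $n\geq 3$, which is klt and non-canonical but has a \emph{unique} divisor over $x$ with log discrepancy $<1$: for any other divisor $F$ over $x$ one computes $a(F;X,0)\geq 1+\tfrac{2}{n}\ord_F E>1$. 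So the method behind this step is false, not merely incomplete.

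The paper sidesteps this entirely: the second log canonical place comes from the boundary, not from a second exceptional valuation. Since $K_X+D\sim 0$, if $D$ avoided $x$ then $K_X$ would be principal, hence Cartier, near $x$; then the discrepancies of the (klt) point $x$ would be integers $>-1$, hence $\geq 0$, making $x$ canonical --- a contradiction. Therefore the reduced divisor $D$ passes through $x$, and a reduced complement through an exceptional singularity is impossible (this is the content of the result cited in the paper). Equivalently, in your language: the component of $D$ through $x$ is one lc place of $(X,D)$ over the germ, and your single valuation $E_0$, which satisfies $a(E_0;X,D)=0$ by your own integrality argument, is a second one, contradicting exceptionality. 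If you replace your step producing $E_1\neq E_2$ by this argument --- one lc place from non-canonicity plus integrality, one lc place from the component of $D$ through $x$ after proving $D\ni x$ --- your proof closes, and it then essentially coincides with the paper's.
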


\begin{proof}
If $X$ admits a toric model, then there is a $1$-complement $(X,D)$.
Since $x\in X$ is not a canonical singularity
and $K_X+D\sim 0$, we conclude that $D$ passes through $x$.
This contradicts the fact that $x\in X$ is an exceptional singularity (see, e.g.,~\cite[Proposition 2.4]{IP99}).
\end{proof}

\begin{example}\label{ex:smoothness}
{\em 
Let $G\leqslant {\rm GL}(3,\cc)$ be a finite group acting on $\cc^3_{x_0,x_1,x_2}$ such that $G$ has no semi-invariants of degree $\leq 3$ and $\cc^3/G$ has an isolated singularity at the image $x\in X$ of the origin (see, e.g.,~\cite[Theorem 1.2]{MP98}).
The variety $\cc^3/G$ is an orbifold cone over
$\pp^2/H$ where $H$ is the image of $G$ in $\mathbb{P}{\rm GL}(3,\cc)$. 
Note that $\pp^2/H$ is a klt Fano surface 
so it is a rational variety.
This implies that $\cc^3/G$ is a rational variety.

The group $G$ acts on $\pp^3$ by acting on the first three coordinates.
Let $X:=\pp^3/G$ be the quotient which is a klt Fano variety with an isolated exceptional quotient singularity $x\in X$.
Note that the action of $G$ on $\pp^3$ commutes
with the action of $\mathbb{G}_m$ given by
$t\cdot [x_0:x_1:x_2:x_3]=[tx_0:tx_1:tx_2:x_3]$.
Then, the $\mathbb{G}_m$-action descends to $X$
and $\mathbb{G}_m$ acts as the identity on the exceptional divisor of the plt blow-up at $x$.
Let $\mu_d\leqslant \mathbb{G}_m$ be the subgroup of $d$ roots of unity. Let $X_d:=X/\mu_d$.
Let $x_d\in X_d$ be the image of $x\in X$.
Then, for $d$ large enough the minimal log discrepancy of
$X_d$ at $x_d$ is smaller than one.
Further, a finite quotient of an exceptional singularity is again an exceptional singularity.
We conclude that $X_d$ is a rational klt Fano variety with an isolated exceptional quotient singularity that is not canonical.
By Schlessinger's theorem on the rigidity of isolated quotient singularities, we conclude that every deformation $X_\eta$ of $X_d$ admits an isolated exceptional quotient singularity that is not canonical.
Thus, by Lemma~\ref{lem:ex-non-can-sing}, we conclude that no deformation $X_\eta$ of $X_d$ admits a toric model.
}
\end{example}

\begin{example}\label{ex:rationality}
{\em 
A smooth quartic $3$-fold, i.e., a smooth hypersurface $X_4\subset \pp^4$ is a Fano variety that is not rational.
Then, $X_4$ does not admit a toric model.
}
\end{example}

Our work states that a general rational smooth Fano $3$-fold is log rational, i.e., it admits a toric model.
However, as shown by Example~\ref{ex:generality}, the general assumption is indeed needed. 
A natural aim is to complete the picture initiated by this article.

\begin{problem}\label{quest:complete-desc}
Complete the description of smooth Fano $3$-folds with toric models.
\end{problem}

It is worth mentioning that the only examples of rational smooth Fano $3$-folds $X$ for which we know that there is no toric model do not have coregularity zero.
Thus, in order to tackle Problem~\ref{quest:complete-desc}, it is reasonable to consider the following question (which is a special case of~\cite[Conjecture 1.4]{Du24}).

\begin{question}
Let $X$ be a rational smooth Fano $3$-fold.
Let $(X,D)$ be a log Calabi--Yau pair of index one and coregularity zero. Does $(X,D)$ admit a toric model?
\end{question}

We are not aware of general smooth Fano varieties in higher dimensions that do not admit toric models.

\begin{question}\label{quest:toric-model}
Is there a higher-dimensional (of dimension at least $4$) example of a general rational smooth Fano variety
that has no toric model?
\end{question}

In~\cite{EFM24}, the authors define the concept of cluster type Fano varieties.
These are special varieties admitting toric models.
More precisely, a Fano variety $X$ of dimension $n$ is said to be {\em cluster type} if there exists a crepant birational map
$\phi\colon (\pp^n,H_0+\dots+H_n)\dashrightarrow (X,B)$, with $B\geq 0$ for which ${\rm Ex}(\phi)\cap \mathbb{G}_m^n$ has codimension at least two in $\mathbb{G}_m^n$.
Fano varieties of cluster type are tightly related to cluster algebras (see~\cite[Theorem 1.3]{EFM24}).
Every general smooth del Pezzo surface is of cluster type.
Thus, it is natural to classify which general smooth Fano $3$-folds are of cluster type.

\begin{question}\label{quest:cluster-type}
Which smooth rational Fano $3$-folds are of cluster type?
\end{question}

\section{Table of smooth Fano 3-folds with toric models}

In this section, we present a table that collects all families of smooth Fano $3$-folds and describes which of them admit toric models.
In the column ``Brief description", we give a description of the general element of such a family of smooth Fano $3$-folds.
In the column ``Gen. rational", we describe whether a general Fano variety in such a family is rational.
In the column ``Toric model", we mention which statement of the paper is used to deduce that such general rational smooth Fano $3$-fold admits a toric model.

\label{sec-the-table}
\small{
\begin{longtable}{|c|c|p{9cm}|c|c|}
\caption{Toric models of smooth Fano threefolds}\label{table:Fanos}\\
\hline Family & $-K_X^3$ &  Brief description & Gen. rational & Toric model \\
\hline $1.1$ & $2$ & A hypersurface in $\mathbb{P}(1,1,1,1,3)$ of
degree $6$. & No & \\
\hline $1.2$ & $4$ & A hypersurface in $\mathbb{P}^4$ of degree
$4$ or\hfill\break a double cover of a smooth quadric in
$\mathbb{P}^{4}$
branched over a surface of degree $8$. & No &  \\
\hline $1.3$ & $6$ & A complete intersection of a quadric and a
cubic in
$\mathbb{P}^{5}$. & No &  \\
\hline $1.4$ & $8$ & A complete intersection of three quadrics
$\mathbb{P}^{6}$. & No &  \\
\hline $1.5$ & $10$ & A section of
$\mathrm{Gr}(2,5)\subset\mathbb{P}^9$ by
quadric and linear subspace of dimension~$7$ or \hfill\break
a double cover of 1-15 with branch locus an anti-canonical divisor.
& No &  \\
\hline $1.6$ & $12$ & A section of the Hermitian symmetric space
$M=G/P\subset\mathbb{P}^{15}$\hfill\break of type DIII  by a
linear
subspace of dimension~$8$. & Yes & Lemma \ref{lem-x12-toric-model} \\
\hline $1.7$ & $14$ & A section of
$\mathrm{Gr}(2,6)\subset\mathbb{P}^{14}$
by a linear subspace of codimension~$5$. & No & \\
\hline $1.8$ & $16$ & A section of the Hermitian symmetric space
$M=G/P\subset \mathbb{P}^{19}$\hfill\break of type CI  by a linear
subspace
of dimension~$10$ & Yes & Lemma~\ref{lem-x16-toric-model} \\
\hline $1.9$ & $18$ & a section of the $5$-dimensional rational
homogeneous contact\hfill\break manifold
$G_2/P\subset\mathbb{P}^{13}$  by
a linear subspace of dimension~$11$ & Yes & Lemma~\ref{lem-x18-toric-model} \\
\hline $1.10$ & $22$ & a zero locus of three sections of the rank
$3$ vector bundle $\bigwedge^2\mathcal{Q}$,\hfill\break where
$\mathcal{Q}$ is
the universal quotient bundle on $\mathrm{Gr}(3,7)$ & Yes &
Lemma~\ref{lem-x22-toric-model} \\
\hline $1.11$ & $8$ & $V_{1}$ that is a hypersurface in
$\mathbb{P}(1,1,1,2,3)$ of degree $6$ & No &
 \\
\hline $1.12$ & $16$ & $V_{2}$ that is a hypersurface in
$\mathbb{P}(1,1,1,1,2)$ of degree $4$ & No & \\
\hline $1.13$ & $24$ & $V_{3}$ that is a hypersurface in
$\mathbb{P}^{4}$ of degree $3$ & No & \\
\hline $1.14$ & $32$ & $V_{4}$ that is a complete intersection of two
quadrics in $\mathbb{P}^{5}$ & Yes & Lemma~\ref{V_4-model}
\\
\hline $1.15$ & $40$ & $V_{5}$ that is a section of
$\mathrm{Gr}(2,5)\subset\mathbb{P}^9$ by linear subspace of
codimension $3$ & Yes & Lemma~\ref{V_5-model} \\
\hline $1.16$ & $54$ & $Q$ that is a hypersurface in $\mathbb{P}^{4}$
of degree $2$ & Yes & Lemma~\ref{lem-quadric-cluster-type} \\
\hline $1.17$ & $64$ & $\mathbb{P}^{3}$ & Yes & Theorem \ref{thm-Ducat} \\
\hline $2.1$ & $4$ & a blow-up of the Fano threefold $V_1$ along an
elliptic curve\hfill\break that is an intersection of two divisors
from $|-\frac{1}{2}K_{V_1}|$ & No &  \\
\hline $2.2$ & $6$ & a double cover of
$\mathbb{P}^1\times\mathbb{P}^2$
whose branch locus is a divisor of bidegree $(2, 4)$ & No &  \\
\hline $2.3$ & $8$ & the blow-up of the Fano threefold $V_2$ along an
elliptic curve\hfill\break that is an intersection of two divisors
from $|-\frac{1}{2}K_{V_2}|$ & No & \\
\hline $2.4$ & $10$ & the blow-up of $\mathbb{P}^3$ along an
intersection of two cubics & Yes & Lemma \ref{lem-rho-2-four-cases}
\\
\hline $2.5$ & $12$ & the blow-up of the threefold
$V_3\subset\mathbb{P}^4$ along a plane cubic & No & \\
\hline $2.6$ & $12$ & a divisor on
$\mathbb{P}^2\times\mathbb{P}^2$ of bidegree $(2, 2)$
or\hfill\break a double cover of $W$ whose branch locus
is a surface in $|-K_W|$  & No &  \\
\hline $2.7$ & $14$ & the blow-up of $Q$ along the intersection of
two divisors from $|\mathcal{O}_Q (2)|$ & Yes & Lemma
\ref{lem-rho-2-four-cases} \\
\hline $2.8$ & $14$ & a double cover of $V_7$ whose branch locus
is a surface
in $|-K_{V_7}|$ & No &  \\
\hline $2.9$ & $16$ & the blow-up of $\mathbb{P}^3$ along a curve
of degree $7$ and genus~$5$\hfill\break which is an intersection
of cubics & Yes & Lemma \ref{lem-rho-2-four-cases} \\
\hline $2.10$ & $16$ & the blow-up of $V_4\subset\mathbb{P}^5$
along an elliptic curve\hfill\break which is an intersection of
two hyperplane
sections & Yes & Lemma \ref{lem-rho-2-four-cases} \\
\hline $2.11$ & $18$ & the blow-up of $V_3$ along a line & No &
 \\
\hline $2.12$ & $20$ & the blow-up of $\mathbb{P}^3$ along a curve
of degree $6$ and genus~$3$\hfill\break which is an intersection
of cubics & Yes & Lemma \ref{lem-rho-2-four-cases} \\
\hline $2.13$ & $20$ & the blow-up of $Q\subset\mathbb{P}^4$ along
a curve of degree $6$ and genus $2$ & Yes & Lemma \ref{lem-rho-2-four-cases} \\
\hline $2.14$ & $20$ & the blow-up of $V_5\subset\mathbb{P}^6$ along
an elliptic curve\hfill\break which is an intersection of two
hyperplane sections & Yes & Lemma \ref{lem-rho-2-four-cases} \\
\hline $2.15$ & $22$ & the blow-up of $\mathbb{P}^3$ along the
intersection of a quadric and a cubic surface & Yes & Lemma
\ref{lem-rho-2-four-cases} \\
\hline $2.16$ & $22$ & the blow-up of $V_4\subset\mathbb{P}^5$
along a conic & Yes & Lemma \ref{lem-rho-2-four-cases} \\
\hline $2.17$ & $24$ & the blow-up of $Q\subset\mathbb{P}^4$ along
an elliptic curve of degree~$5$ & Yes & Lemma \ref{lem-rho-2-four-cases} \\
\hline $2.18$ & $24$ & a double cover of
$\mathbb{P}^1\times\mathbb{P}^2$ whose branch locus is a divisor of
bidegree $(2, 2)$ & Yes & Lemma \ref{lem-2.18} \\
\hline $2.19$ & $26$ & the blow-up of $V_4\subset\mathbb{P}^5$ along a
line & Yes & Lemma \ref{lem-rho-2-four-cases} \\
\hline $2.20$ & $26$ & the blow-up of $V_5\subset\mathbb{P}^6$
along a twisted cubic & Yes & Lemma \ref{lem-rho-2-four-cases} \\
\hline $2.21$ & $28$ & the blow-up of $Q\subset\mathbb{P}^4$ along
a twisted quartic & Yes & Lemma \ref{lem-rho-2-four-cases} \\
\hline $2.22$ & $30$ & the blow-up of $V_5\subset\mathbb{P}^6$
along a conic & Yes & Lemma \ref{lem-rho-2-four-cases} \\
\hline $2.23$ & $30$ & the blow-up of $Q\subset\mathbb{P}^4$ along a
curve of degree $4$ that is an intersection of a surface in
$|\mathcal{O}_{\mathbb{P}^{4}}(1)\vert_{Q}|$ and a surface in
$|\mathcal{O}_{\mathbb{P}^{4}}(2)\vert_{Q}|$ & Yes & Lemma
\ref{lem-rho-2-four-cases} \\
\hline $2.24$ & $30$ & a divisor on $\mathbb{P}^2\times\mathbb{P}^2$
of bidegree $(1, 2)$ & Yes & Lemma \ref{lem:2.24} \\
\hline $2.25$ & $32$ & the blow-up of $\mathbb{P}^3$ along an elliptic
curve which is an intersection of two quadrics & Yes & Lemma
\ref{lem-rho-2-four-cases} \\
\hline $2.26$ & $34$ & the blow-up of the threefold
$V_5\subset\mathbb{P}^6$ along a line & Yes & Lemma
\ref{lem-rho-2-four-cases} \\
\hline $2.27$ & $38$ & the blow-up of $\mathbb{P}^3$ along a twisted
cubic  & Yes & Lemma \ref{lem-rho-2-four-cases} \\
\hline $2.28$ & $40$ & the blow-up of $\mathbb{P}^3$ along a plane
cubic &Yes & Lemma \ref{lem-rho-2-four-cases} \\
\hline $2.29$ & $40$ & the blow-up of $Q\subset\mathbb{P}^4$ along a
conic & Yes & Lemma \ref{lem-rho-2-four-cases} \\
\hline $2.30$ & $46$ & the blow-up of $\mathbb{P}^3$ along a conic   &
Yes & Lemma \ref{lem-rho-2-four-cases} \\
\hline $2.31$ & $46$ & the blow-up of $Q\subset\mathbb{P}^4$ along a
line & Yes & Lemma \ref{lem-rho-2-four-cases} \\
\hline $2.32$ & $48$ & $W$ that is a divisor on
$\mathbb{P}^2\times\mathbb{P}^2$ of bidegree $(1, 1)$ & Yes &
Lemma~\ref{lem-2.32-Q} \\
\hline $2.33$ & $54$ & the blow-up of $\mathbb{P}^3$ along a line  &
Yes & Proposition~\ref{prop-toric-fanos} \\
\hline $2.34$ & $54$ & $\mathbb{P}^1\times\mathbb{P}^2$  & Yes &
Proposition~\ref{prop-toric-fanos} \\
\hline $2.35$ & $56$ &
$V_7\cong\mathbb{P}(\mathcal{O}_{\mathbb{P}^2}\oplus\mathcal{O}_{\mathbb{P}^2}(1))$
 &Yes & Proposition~\ref{prop-toric-fanos} \\
\hline $2.36$ & $62$ &
$\mathbb{P}(\mathcal{O}_{\mathbb{P}^2}\oplus\mathcal{O}_{\mathbb{P}^2}(2))$
 & Yes & Proposition~\ref{prop-toric-fanos} \\
\hline $3.1$ & $12$ & a double cover of
$\mathbb{P}^1\times\mathbb{P}^1\times\mathbb{P}^1$ branched in a
divisor of type $(2, 2, 2)$ & No & \\
\hline $3.2$ & $14$ & a divisor on a $\mathbb{P}^{2}$-bundle
$\mathbb{P}(\mathcal{O}_{\mathbb{P}^1\times\mathbb{P}^1}\oplus\mathcal{O}_{\mathbb{P}^1\times\mathbb{P}^1}(-1,-1)\oplus\mathcal{O}_{\mathbb{P}^1\times\mathbb{P}^1}(-1,-1))$
such that $X\in|L^{{}\otimes
2}\otimes\mathcal{O}_{\mathbb{P}^{1}\times\mathbb{P}^{1}}(2,3)|$,
where $L$ is the tautological line bundle & Yes & Lemma
\ref{lemma:3-2} \\
\hline $3.3$ & $18$ & a divisor on
$\mathbb{P}^1\times\mathbb{P}^1\times\mathbb{P}^2$ of type $(1,
1, 2)$ & Yes & Lemma~\ref{lem:Fano-blow-up-8-cases}.(2) \\
\hline $3.4$ & $18$ & the blow-up of the Fano threefold $Y$ from the
family \textnumero\,2.18 along a smooth fiber of the
composition $Y\to\mathbb{P}^1\times\mathbb{P}^2\to\mathbb{P}^2$ of the
double cover with the projection & Yes & Lemma~\ref{lem:Fano-blow-up-8-cases}.(7)
\\
\hline $3.5$ & $20$ & the blow-up of $\mathbb{P}^1\times\mathbb{P}^2$
along a curve $C$ of bidegree $(5, 2)$\hfill\break such that the
composition  $C\hookrightarrow\mathbb{P}^1\times\mathbb{P}^2\to\mathbb{P}^2$
is an embedding & Yes & Lemma~\ref{lem:Fano-blow-up-8-cases}.(2) \\
\hline $3.6$ & $22$ & the blow-up of $\mathbb{P}^3$ along a disjoint
union of a line and an elliptic curve of degree~$4$ & Yes & Lemma
\ref{lem-rho-2-four-cases} \\
\hline $3.7$ & $24$ & the blow-up of the threefold $W$ along an
elliptic curve\hfill\break that is an intersection of two  divisors
from $|-\frac{1}{2}K_W|$ & Yes & Lemma \ref{lem:Fano-blow-up-8-cases}.(1)
\\
\hline $3.8$ & $24$ & a divisor in
$|(\alpha\circ\pi_1)^*(\mathcal{O}_{\mathbb{P}^2}(1))\otimes\pi_2^*(\mathcal{O}_{\mathbb{P}^2}(2))|$,
where $\pi_{1}\colon\mathbb{F}_1\times\mathbb{P}^2\to\mathbb{F}_1$
and $\pi_{2}\colon\mathbb{F}_1\times\mathbb{P}^2\to\mathbb{P}^2$ are
projections, and $\alpha\colon\mathbb{F}_1\to\mathbb{P}^2$ is a blow
up of a point & Yes & Lemma
\ref{lem:Fano-blow-up-8-cases}.(2) \\

\hline $3.9$ & $26$ & the blow-up of a cone $W_4\subset\mathbb{P}^6$
over the Veronese surface  $R_4\subset\mathbb{P}^5$\hfill\break with
center in a disjoint union of the vertex and a quartic on
$R_4\cong\mathbb{P}^2$ & Yes & Lemma
\ref{lem:Fano-blow-up-8-cases}.(5) \\

\hline $3.10$ & $26$ & the blow-up of $Q\subset\mathbb{P}^4$ along a
disjoint union of two conics & Yes & Lemma
\ref{lem-rho-2-four-cases} \\

\hline $3.11$ & $28$ & the blow-up of the threefold $V_7$ along an
elliptic curve\hfill\break that is an intersection of  two divisors
from $|-\frac{1}{2}K_{V_7}|$ & Yes & Lemma
\ref{lem:Fano-blow-up-8-cases}.(4) \\

\hline $3.12$ & $28$ & the blow-up of $\mathbb{P}^3$ along a disjoint
union of a line and a twisted cubic & Yes & Lemma
\ref{lem-rho-2-four-cases} \\

\hline $3.13$ & $30$ & the blow-up of
$W\subset\mathbb{P}^2\times\mathbb{P}^2$ along a curve $C$ of
bidegree $(2, 2)$\hfill\break such that
$\pi_{1}(C)\subset\mathbb{P}^2$ and
$\pi_{2}(C)\subset\mathbb{P}^{2}$ are irreducible
conics,\hfill\break where $\pi_{1}\colon W\to\mathbb{P}^2$ and
$\pi_{2}\colon W\to\mathbb{P}^2$ are
natural projections & Yes & Lemma \ref{lem:Fano-blow-up-8-cases}.(1) \\

\hline $3.14$ & $32$ & the blow-up of $\mathbb{P}^3$ along a
disjoint union of a plane cubic curve that is
contained in a plane
$\Pi\subset\mathbb{P}^{3}$ and a point that is not contained in $\Pi$
& Yes & Lemma \ref{lem-rho-2-four-cases} \\

\hline $3.15$ & $32$ & the blow-up of $Q\subset\mathbb{P}^4$ along a
disjoint union of a line and a conic & Yes & Lemma
\ref{lem-rho-2-four-cases} \\

\hline $3.16$ & $34$ & the blow-up of $V_7$ along a proper
transform via the blow-up $\alpha\colon
V_7\to\mathbb{P}^3$ of a twisted cubic
passing through the center of the blow-up $\alpha$ & Yes & Lemma
\ref{lem:Fano-blow-up-8-cases}.(4) \\

\hline $3.17$ & $36$ & a divisor on
$\mathbb{P}^1\times\mathbb{P}^1\times\mathbb{P}^2$ of type $(1,
1, 1)$ & Yes & Lemma \ref{lem:Fano-blow-up-8-cases}.(2) \\

\hline $3.18$ & $36$ & the blow-up of $\mathbb{P}^3$ along a disjoint
union of a line and a conic & Yes & Lemma
\ref{lem-rho-2-four-cases} \\

\hline $3.19$ & $38$ & the blow-up of $Q\subset\mathbb{P}^4$ at two
non-collinear points & Yes & Lemma \ref{lem-rho-2-four-cases} \\

\hline $3.20$ & $38$ & the blow-up of $Q\subset\mathbb{P}^4$ along a
disjoint union of two lines & Yes& Lemma
\ref{lem-rho-2-four-cases} \\

\hline $3.21$ & $38$ & the blow-up of $\mathbb{P}^1\times\mathbb{P}^2$
along a curve of bidegree  $(2, 1)$ & Yes & Lemma
\ref{lem:Fano-blow-up-8-cases}.(2) \\

\hline $3.22$ & $40$ & the blow-up of $\mathbb{P}^1\times\mathbb{P}^2$
along a conic in a fiber of the projection
$\mathbb{P}^{1}\times\mathbb{P}^2\to\mathbb{P}^1$ & Yes & Lemma
\ref{lem:Fano-blow-up-8-cases}.(2) \\

\hline $3.23$ & $42$ & the blow-up of $V_7$ along a proper transform
via the blow-up $\alpha\colon V_7\to\mathbb{P}^3$ of an
irreducible conic passing through the center of the blow-up $\alpha$ &
Yes & Lemma \ref{lem:Fano-blow-up-8-cases}.(4) \\

\hline $3.24$ & $42$ & $W\times_{\mathbb{P}^2}\mathbb{F}_1$, where
$W\to\mathbb{P}^2$ is a $\mathbb{P}^1$-bundle and
$\mathbb{F}_1\to\mathbb{P}^2$ is the
blow-up & Yes & Lemma \ref{lem:Fano-blow-up-8-cases}.(1) \\

\hline $3.25$ & $44$ & the blow-up of $\mathbb{P}^3$ along a disjoint
union of two lines & Yes & Proposition~\ref{prop-toric-fanos} \\

\hline $3.26$ & $46$ & the blow-up of $\mathbb{P}^3$ with center in a
disjoint union of a point and a line  & Yes &
Proposition~\ref{prop-toric-fanos} \\

\hline $3.27$ & $48$ &
$\mathbb{P}^1\times\mathbb{P}^1\times\mathbb{P}^1$  & Yes &
Proposition~\ref{prop-toric-fanos} \\

\hline $3.28$ & $48$ & $\mathbb{P}^1\times\mathbb{F}_1$  & Yes &
Proposition~\ref{prop-toric-fanos} \\

\hline $3.29$ & $50$ & the blow-up of the Fano threefold $V_7$
along a line in $E\cong\mathbb{P}^2$,\hfill\break where $E$ is the
exceptional divisor of the
blow-up $V_7\to\mathbb{P}^3$  & Yes & Proposition~\ref{prop-toric-fanos} \\

\hline $3.30$ & $50$ & the blow-up of $V_7$ along a proper transform
via the blow-up $\alpha\colon V_7\to\mathbb{P}^3$ of a
line that passes through the center of the blow-up $\alpha$  & Yes &
Proposition~\ref{prop-toric-fanos} \\

\hline $3.31$ & $52$ & the blow-up of a cone over a smooth quadric in
$\mathbb{P}^3$ at the vertex & Yes & Proposition~\ref{prop-toric-fanos} \\

\hline $4.1$ & $24$ & divisor on
$\mathbb{P}^1\times\mathbb{P}^1\times\mathbb{P}^1\times\mathbb{P}^1$
of multidegree $(1, 1, 1, 1)$ & Yes & Lemma \ref{lem:Fano-blow-up-8-cases}.(3) \\

\hline $4.2$ & $28$ & the blow-up of the cone over a smooth quadric
$S\subset\nolinebreak\mathbb{P}^3$\hfill\break along a disjoint union
of the vertex and an elliptic curve on $S$ & Yes & Lemma
\ref{lem:Fano-blow-up-8-cases}.(6) \\

\hline $4.3$ & $30$ & the blow-up of
$\mathbb{P}^1\times\mathbb{P}^1\times\mathbb{P}^1$ along a curve of
type $(1, 1, 2)$  & Yes & Lemma \ref{lem:Fano-blow-up-8-cases}.(3) \\

\hline $4.4$ & $32$ & the blow-up of the smooth Fano threefold $Y$
from the family \textnumero\,3.19 along the proper
transform of a conic on the quadric $Q\subset\mathbb{P}^4$\hfill\break
that passes through both centers of the blow-up $Y\to Q$ & Yes &
Lemma \ref{lem-rho-2-four-cases} \\

\hline $4.5$ & $32$ & the blow-up of $\mathbb{P}^1\times\mathbb{P}^2$
along a disjoint union of\hfill\break two irreducible curves of
bidegree $(2, 1)$ and $(1, 0)$ & Yes & Lemma
\ref{lem:Fano-blow-up-8-cases}.(2) \\

\hline $4.6$ & $34$ & the blow-up of $\mathbb{P}^3$ along a disjoint
union of three lines & Yes & Lemma \ref{lem-rho-2-four-cases} \\

\hline $4.7$ & $36$ & the blow-up of
$W\subset\mathbb{P}^2\times\mathbb{P}^2$ along a disjoint union
of\hfill\break two curves of bidegree $(0, 1)$ and $(1, 0)$ & &
Lemma \ref{lem:Fano-blow-up-8-cases}.(1) \\

\hline $4.8$ & $38$ & the blow-up of
$\mathbb{P}^1\times\mathbb{P}^1\times\mathbb{P}^1$ along a curve of
type $(0, 1, 1)$  & Yes & Lemma \ref{lem:Fano-blow-up-8-cases}.(3) \\

\hline $4.9$ & $40$ & the blow-up of the smooth Fano threefold $Y$
from the family
\textnumero\,3.25 along a curve that is contracted
by the blow
up $Y \to\mathbb{P}^3$ & Yes & Proposition~\ref{prop-toric-fanos} \\
\hline $4.10$ & $42$ & $\mathbb{P}^1\times S_7$ & Yes &
Proposition~\ref{prop-toric-fanos} \\
\hline $4.11$ & $44$ & the blow-up of
$\mathbb{P}^1\times\mathbb{F}_1$ along a curve $C\cong\mathbb{P}^1$ such
that $C$ is contained in a fiber $F\cong\mathbb{F}_{1}$ of
the projection
$\mathbb{P}^1\times\mathbb{F}_{1}\to\mathbb{P}^1$ and $C\cdot C=-1$ on
$F$ & Yes & Proposition~\ref{prop-toric-fanos} \\
\hline $4.12$ & $46$ & the blow-up of the smooth Fano threefold
$Y$ from the family \textnumero\,2.33 along two curves that are
contracted by the blow-up
$Y\to\mathbb{P}^3$ & Yes & Proposition~\ref{prop-toric-fanos} \\
\hline $4.13$ & $26$ & the blow-up of
$\mathbb{P}^1\times\mathbb{P}^1\times\mathbb{P}^1$ along a curve of
type $(1, 1, 3)$ & Yes & Lemma \ref{lem:Fano-blow-up-8-cases}.(3) \\
\hline $5.1$ & $28$ & the blow-up of the smooth Fano threefold $Y$
from the family \textnumero\,2.29 along three curves that are
contracted by the
blow-up $Y\to Q$ & Yes & Lemma \ref{lem:Fano-blow-up-8-cases}.(8) \\
\hline $5.2$ & $36$ & the blow-up of the smooth Fano threefold $Y$
in the family $3.25$ along two curves $C_{1}\ne
C_{2}$ that are contracted by the blow-up $\phi\colon
Y\to\nolinebreak\mathbb{P}^3$ and that are contained in the
same exceptional divisor of the blow-up $\phi$  &
Yes & Proposition~\ref{prop-toric-fanos} \\
\hline $5.3$ & $36$ & $\mathbb{P}^1\times S_6$  & Yes &
Proposition~\ref{prop-toric-fanos} \\
\hline $6.1$ & $30$ & $\mathbb{P}^1\times S_5$  & Yes &
Proposition~\ref{prop-product-model} \\
\hline $7.1$ & $24$ & $\mathbb{P}^1\times S_4$  & Yes &
Proposition~\ref{prop-product-model} \\
\hline $8.1$ & $18$ & $\mathbb{P}^1\times S_3$  & Yes &
Proposition~\ref{prop-product-model} \\
\hline $9.1$ & $12$ & $\mathbb{P}^1\times S_2$  & Yes &
Proposition~\ref{prop-product-model} \\
\hline $10.1$ & $6$ & $\mathbb{P}^1\times S_1$  & Yes &
Proposition~\ref{prop-product-model} \\
\hline
\end{longtable}
}

\bibliographystyle{habbrv}
\bibliography{bib}

\vspace{0.5cm}
\end{document}